\documentclass[10pt]{article}
\usepackage{indentfirst, latexsym, amsmath,bm,amssymb}
\usepackage{tikz}
\usepackage{color,soul}
\usepackage{amsmath}
\usepackage{amssymb}
\usepackage{mathrsfs}
\usepackage{mathtools}
\usepackage{stmaryrd}
\usepackage{bbold}

\allowdisplaybreaks[4]

\begin{document}
\newtheorem{fact}{Fact}
\newtheorem{theorem}{Theorem}[section]
\newtheorem{claim}{Claim}[section]
\newtheorem{corollary}[theorem]{Corollary}
\newtheorem{definition}[theorem]{Definition}
\newtheorem{conjecture}[theorem]{Conjecture}
\newtheorem{question}[theorem]{Question}
\newtheorem{lemma}[theorem]{Lemma}
\newtheorem{remark}[theorem]{Remark}
\newtheorem{proposition}[theorem]{Proposition}
\newtheorem{example}[theorem]{Example}
\newenvironment{proof}{\noindent {\bf
Proof.}}{\rule{3mm}{3mm}\par\medskip}
\newenvironment{proofclaim}{\noindent {\bf
Proof of  Claim}}{\rule{2mm}{2mm}\par\medskip}
\newenvironment{proofof}{\noindent {\bf
Proof of  Theorem}}{\rule{3mm}{3mm}\par\medskip}
\newcommand{\pp}{{\it p.}}
\newcommand{\de}{\em}

\title{  {The signless Laplacian spectral Tur\'{a}n problems for disjoint triangles
}\thanks{
Ming-Zhu Chen is partly supported by the National Natural Science Foundation of China (No. 12461063),
Ya-Lei Jin is partly supported by the National Natural Science Foundation of China (Nos. 12371349 and 12471331),
Peng-Li Zhang is partly supported by the National Natural Science Foundation of China (No. 12501493).
The authors   are listed in alphabetical order.
E-mail addresses: mzchen@hainanu.edu.cn (M.-Z. Chen), yaleijin@shnu.edu.cn ($^\dag$Y.-L. Jin, corresponding author), zpengli0626@163.com ($^\dag$P.-L. Zhang, corresponding author).}}

\author{
Ming-Zhu Chen$^{a}$,
Ya-Lei Jin$^{b\dagger}$,
Peng-Li Zhang$^{c\dagger}$\\[0.5em]
{\small $^{a}$ School of Mathematics and Statistics,}\\
{\small Key Laboratory of Engineering Modeling and Statistical Computation of Hainan Province,}\\
{\small Hainan University, Haikou 570228, P.R. China}\\[0.3em]
{\small $^{b}$ Department of Mathematics, Shanghai Normal University,}\\
{\small Shanghai 200234, P.R. China}\\[0.3em]
{\small $^{c}$ School of Statistics and Data Science,}\\
{\small Shanghai University of International Business and Economics,}\\
{\small Shanghai 201620, P.R. China}
}

\date{}

\maketitle

\vspace{-0.5cm}

\begin{abstract}
The classical   Tur\'{a}n-type problem for graphs forbidding $k$ disjoint triangles has been resolved firstly by Erd\H{o}s [Arch. Math. 13 (1962) 222-227] for  graphs of  sufficiently large order  and improved by  Moon [Canad. J. Math. 20 (1968) 95-102],
who established that the unique edge extremal graph is $K_{k-1}\vee T_2(n-k+1)$  for  $n> \frac{19k-11}{2}$. 
In this paper,
we  determine the maximum signless Laplacian spectral radius among all $kK_3$-free graphs of order $n$ and characterize the unique  signless Laplacian spectral extremal graph $K_{k-1}\vee  T_{2}(n-k+1)$ for all  $k\ge 2$ and sufficiently large $n$.
\end{abstract}

{AMS Classification: 05C35, 05C50}

{{\bf Keywords:}
 Signless Laplacian spectral radius; Disjoint  triangles; Extremal graph}
\section{Introduction}

We say that a graph is $F$-free if it does not contain $F$ as a subgraph.  One of the most central topics in extremal combinatorics, the Tur\'{a}n-type problem, asks for  the maximum  number of edges in    an $n$-vertex $F$-free graph. This maximum number of edges is called the {\it Tur\'{a}n number} of $F$, denoted by $ex(n,F).$ An $n$-vertex $F$-free graph with $ex(n,F)$ edges is called an {\it edge extremal graph} for $F$.   In 1907, Mantel \cite{Mantel1907} characterized the unique edge
extremal graph for  $K_3$. In 1941, Tur\'{a}n~\cite{Turan 1941}  extended the result of Mantel by determining the unique edge extremal graph for $K_{r+1}$.  In 1962, Erd\H{o}s \cite{Erdos1962}
characterized the unique edge extremal graph for $kK_3$ if $n > 400(k-1)^2$. Later in 1968, Moon \cite{Moon1968} showed that the result of Erd\H{o}s is also valid for $n >\frac{19k-11}{2}$.  Simonovits \cite{Simonovits1966} and Moon \cite{Moon1968}
independently characterized the unique edge extremal graph for $kK_{r+1}$ if  $n$ is sufficiently large.

\begin{theorem}[\cite{Moon1968,Simonovits1966}]\label{thm-Simonovits}
 Let $G$  be a $kK_{r+1}$-free graph of order $n$. If $n$ is  sufficiently large, then $ex(n,kK_{r+1})= e(K_{k-1}\vee  T_r(n-k+1))$, and the unique edge extremal graph is $K_{k-1}\vee  T_r(n-k+1)$.
\end{theorem}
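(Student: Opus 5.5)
We would prove Theorem~\ref{thm-Simonovits} by induction on $k$, taking Tur\'an's theorem ($k=1$: $ex(n,K_{r+1})=e(T_r(n))$ with unique extremal graph $T_r(n)$) as the base case. Set $f_k(n)=e(K_{k-1}\vee T_r(n-k+1))$. A direct count shows $f_k(n)-f_{k-1}(n)$ is of order $\frac{n}{r}$ more than $n-1$, and that $f_k(n) \ge e(T_r(n)) + (k-1)n\bigl(1-\tfrac{1}{r}\bigr)+O(k^2)$. Let $G$ be $kK_{r+1}$-free on $n$ vertices with $e(G)\ge f_k(n)$; we aim to show $G\cong K_{k-1}\vee T_r(n-k+1)$.

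\textbf{Step 1 (find many copies of $K_{r+1}$, then one vertex of huge degree).} Since $e(G)>e(T_r(n))$, $G$ contains $K_{r+1}$. Suppose first that $G$ is $(k-1)K_{r+1}$-free. Then the induction hypothesis gives $e(G)\le f_{k-1}(n)<f_k(n)$, a contradiction. So $G$ contains $k-1$ disjoint copies $C_1,\dots,C_{k-1}$ of $K_{r+1}$; let $S$ be their union, with $|S|=(k-1)(r+1)$. Since $G-S$ is $K_{r+1}$-free, Tur\'an gives $e(G-S)\le e(T_r(n-|S|))$. Hence the edges meeting $S$ number at least $f_k(n)-e(T_r(n-|S|))\ge (k-1)(r+1)\cdot n(1-\tfrac1r)-O(1)+(k-1)\tfrac nr$, roughly. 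On the other hand, a vertex of $S$ adjacent to many vertices outside $S$ is harmless, but if no vertex of $S$ had degree $\ge n-o(n)$, we would get a contradiction. For this, the key counting observation is the following: if some $C_i$ has at least two vertices each adjacent to at least $(1-\tfrac1r+\epsilon)n$ vertices of $G-S$, then using the near-Tur\'an structure of $G-S$ (via stability: $G-S$ has $e(T_r)-O(n)$ edges and so is $o(n^2)$-close to $T_r(n-|S|)$), one can re-route and find $k$ disjoint copies of $K_{r+1}$. So each $C_i$ contributes at most about $(r+1)(1-\tfrac1r)n+\tfrac{n}{r}$ edges to $V\setminus S$, with the extra $\tfrac nr$ attainable only if one vertex of $C_i$ is adjacent to nearly all vertices. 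Comparing with the lower bound forces a vertex $v$ with $d(v)\ge n-o(n)$; in fact it forces one such vertex in each $C_i$.

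\textbf{Step 2 (remove $v$ and induct).} $G-v$ is $(k-1)K_{r+1}$-free, since otherwise $v$, being adjacent to almost all vertices of the Tur\'an-like remainder, together with an $r$-clique there avoiding those $k-1$ copies would give $kK_{r+1}$. By induction, $e(G-v)\le f_{k-1}(n-1)$, and $e(G)\le f_{k-1}(n-1)+n-1=f_k(n)$. Equality forces $d(v)=n-1$ and, by the uniqueness part of the induction hypothesis, $G-v\cong K_{k-2}\vee T_r(n-k+1)$. Thus $G\cong K_{k-1}\vee T_r(n-k+1)$. That this graph is $kK_{r+1}$-free is immediate: each $K_{r+1}$ must use a vertex of $K_{k-1}$, since $T_r$ is $K_{r+1}$-free.

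\textbf{Main obstacle.} The hard step is the re-routing argument in Step~1. We must show that $k-1$ disjoint cliques whose vertices have large degree into an almost-Tur\'an graph, but where no single vertex is nearly universal, yield a $k$-th disjoint clique. We would attempt this with the stability theorem (Erd\H{o}s--Simonovits). First partition $G-S$ into $r$ near-balanced parts with $o(n^2)$ missing cross edges. Then use a greedy embedding: a vertex adjacent to $(1-\tfrac1r+\epsilon)n$ outside vertices has $\epsilon n$ neighbours in every part, so two such vertices in $C_i$ can each be extended to disjoint $K_{r+1}$'s using typical vertices. Because $n$ is sufficiently large, all $O(k)$ error terms are absorbed.
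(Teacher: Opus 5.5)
The paper gives no proof of this statement. It is quoted from Moon and Simonovits as background, and nothing in the proof of Theorem~\ref{main result} depends on it, so there is no in-paper argument to compare yours against. Judged on its own, your sketch is a correct proof and is essentially Simonovits's stability method. Its chain of steps is:
\begin{enumerate}
\item Induct on $k$, with Tur\'an's theorem as the base case.
\item Apply Erd\H{o}s--Simonovits stability to the $K_{r+1}$-free graph $G-S$. It has at least $e(T_r(n-|S|))-O(n)$ edges, because at most $|S|n$ edges meet $S$.
\item Show by re-routing that no $C_i$ has two vertices with at least $(1-\frac1r+\epsilon)n$ neighbours in $G-S$. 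Such a neighbourhood has $\epsilon n-O(1)$ vertices in every near-Tur\'an part, so it contains $\Omega(n^r)$ transversal $r$-cliques, and two disjoint ones can be chosen.
\item Compare degrees with the lower bound $f_k(n)-e(T_r(n-|S|))\ge (k-1)rn-O(1)$. This forces, in each $C_i$, exactly one vertex of degree at least $(1-(k-1)r\epsilon)n-O(1)$.
\item Delete one such vertex and use the identity $f_k(n)=f_{k-1}(n-1)+n-1$. The claim that $G-v$ is $(k-1)K_{r+1}$-free follows from the same transversal count inside $N(v)$, avoiding the $O(1)$ vertices of $S\cup S'$.
\end{enumerate}

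The constants fit if you choose them in this order. First fix $\epsilon$ with $(k-1)r^2\epsilon$ small. Then take the stability parameter $\delta<(\epsilon/2)^r$, so that $\delta n^2$ missing cross edges kill fewer than the roughly $(\epsilon n/2)^r$ transversal tuples inside a neighbourhood. Then take $n$ large. You do not need $d(v)\ge n-o(n)$; a fixed small linear deficit is enough for Step~2.

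Two preliminary formulas are misstated, though neither is actually used:
\begin{itemize}
\item $f_k(n)-f_{k-1}(n)=\lceil (n-k+2)/r\rceil-1\approx \frac nr$; it is not ``$\frac nr$ more than $n-1$''.
\item Consequently $f_k(n)=e(T_r(n))+(k-1)\frac nr+O(1)$, not $e(T_r(n))+(k-1)n(1-\frac1r)+O(k^2)$. The latter is false for $r\ge 3$.
\end{itemize}
Your Step~1 computation in fact uses the correct term $(k-1)\frac nr$.

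As for what this route buys: stability treats all $r$ uniformly and gives a clean structural argument. The price is a threshold $n_0$ that is very large and, as written, not explicit. Explicit bounds such as Moon's $n>\frac{19k-11}{2}$ for $r=2$, which the paper also cites, are out of reach of this kind of argument and come from more direct counting.
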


Let $A(G)$ be the adjacency matrix  and
$Q(G)=D(G)+A(G)$  the signless Laplacian matrix of a graph $G$, respectively,
where $D(G)$ is the degree diagonal matrix of $G$. The largest eigenvalue of  $Q(G)$ is called the  {\it signless Laplacian spectral radius} of $G$,  denoted by $q(G)$. 
The signless Laplacian spectral radius has also attracted considerable interest due to its applications in combinatorial optimization, network analysis, and chemical graph theory, particularly in relation to bipartiteness and the presence of odd cycles.

In 2013, de Freitas, Nikiforov and Patuzzi \cite{Freitas-Nikiforov-Patuzzi2013} presented the signless Laplacian spectral Tur\'{a}n-type problem, which asks for the maximum signless Laplacian spectral radius in an $n$-vertex $F$-free graph. 
 The $n$-vertex $F$-free graph with this maximum signless Laplacian spectral radius is called {\it signless Laplacian spectral extremal graph} for $F$.
This problem has attracted much attention in the past decades.
Dating back to \cite{Merris1998},  an upper bound on the
signless Laplacian spectral radius involving the degrees of vertices was provided.
\begin{theorem} [\cite{Feng Yu 2009,Merris1998}]\label{the upper bound of q(G)}
Let $G$ be a graph with vertex set $V(G)$ without isolated vertices. Then
\begin{eqnarray}\label{Merris bound}
q(G)\leq \max\limits_{u\in V(G)} \left\{d_{G}(u)+\frac{1}{d_{G}(u)}\sum\limits_{v\in N_{G}(u)} d_{G}(v)\right\}.
\end{eqnarray}
If $G$ is connected, then equality holds if and only if $G$ is either a regular graph or a semi-regular bipartite graph.
\end{theorem}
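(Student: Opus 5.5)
The plan is to bound the Rayleigh quotient of $Q(G)$ at a Perron vector by a quantity that is visibly at most the stated maximum. For the equality case, I will look at when each inequality in the chain is tight.

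\textbf{Setting up the inequality.} Let $x$ be a unit eigenvector of $Q(G)$ for $q(G)$. By Perron--Frobenius we may take $x\ge 0$, and if $G$ is connected then $x>0$. For any vector $x$,
\[
x^{T}Q(G)x=\sum_{uv\in E(G)}(x_u+x_v)^2 .
\]
A cleaner route works with the line graph. Let $R$ be the $n\times m$ vertex--edge incidence matrix. Then $Q=RR^{T}$, and $R^{T}R=2I+A(L(G))$. Hence $q(G)=\lambda_1(R^TR)=2+\lambda_1(A(L(G)))$.

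For a nonnegative matrix, $\lambda_1$ is at most the maximum weighted row sum. That is, for any positive weight vector $w$,
\[
\lambda_1(M)\le\max_i \frac{(Mw)_i}{w_i}.
\]
I apply this to $M=R^TR$. Index the rows by edges $e=uv$ and choose the weight $w_e=d_u+d_v$; this is positive because there are no isolated vertices. Then
\[
(R^TRw)_e=\sum_{f\ni u}w_f+\sum_{f\ni v}w_f ,
\]
and for a vertex $u$,
\[
\sum_{f\ni u}w_f=d_u^2+\sum_{y\sim u}d_y = d_u\, m_u',
\]
where $m_u'=d_u+\frac{1}{d_u}\sum_{y\sim u}d_y$ is exactly the quantity in \eqref{Merris bound}. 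So the row ratio equals
\[
\frac{d_u m_u'+d_v m_v'}{d_u+d_v}\le\max\{m_u',m_v'\},
\]
because it is a weighted average of $m_u'$ and $m_v'$. This proves the inequality.

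\textbf{Equality case.} Here I assume $G$ is connected, so $L(G)$ is connected and $R^TR$ is irreducible. For an irreducible matrix, equality in the row-sum bound forces every ratio $(Mw)_e/w_e$ to equal $\lambda_1$. The maximum of $m'$ is attained, equal to $q$, at some vertex. Combining this with the weighted-average step, equality at an edge $uv$ forces $m_u'=m_v'=q$. By connectivity, $m_u'=q$ for every vertex $u$. Moreover $w$ is then the Perron vector of $R^TR$, so $R w$ is a Perron vector of $Q$, with $(Rw)_u=d_u m_u'=q\,d_u$. Hence $x$ is proportional to the degree vector $d$.

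Plugging this into $Qx=qx$ gives $d_u^2+\sum_{y\sim u}d_y=q\,d_u$ for all $u$; this is consistent but not yet decisive. The main obstacle is extracting regularity or semi-regular bipartiteness from it. Note that $(Q d)_u = d_u^2+\sum_{y\sim u} d_y$. My first attempt will compare with $A$: the condition says $\sum_{y\sim u}d_y=(q-d_u)d_u$. Let $u$ be a vertex of maximum degree $\Delta$ and $v$ a vertex of minimum degree $\delta$. Bounding the neighbour sums by $\Delta$ and $\delta$ gives $(q-\Delta)\Delta\ge\delta\Delta$ and $(q-\delta)\delta\le\Delta\delta$. Both say $q$ is squeezed to $\Delta+\delta$. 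Equality then forces every neighbour of a maximum-degree vertex to have degree $\delta$, and vice versa. Propagating this along connectivity, degrees alternate between $\Delta$ and $\delta$ along edges. Therefore either $\Delta=\delta$ (regular) or the two degree classes form a bipartition (semi-regular bipartite).

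The converse is direct. If $G$ is $r$-regular, then $q=2r=m'$. If $G$ is $(a,b)$-semiregular bipartite, then $q=a+b=m'$ at every vertex, since $d$ is then an eigenvector of $Q$.
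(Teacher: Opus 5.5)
The paper does not prove this statement. It is quoted from Merris and Feng--Yu and used only as a tool in Section~3, so there is no in-paper argument to compare with. Your proof is correct.

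The usual proof of this bound applies the row-sum bound to the diagonally similar matrix $D^{-1}Q(G)D$. Equivalently, it applies Collatz--Wielandt to $Q(G)$ itself with test vector $\mathbf{d}=(d_u)_u$, whose $u$-th ratio is $(Q\mathbf{d})_u/d_u=d_u+\frac{1}{d_u}\sum_{v\sim u}d_v$. This gives the inequality in one line. For connected $G$, irreducibility of $Q(G)$ then turns equality into $Q\mathbf{d}=q\mathbf{d}$, i.e.\ constancy of $m'_u$.

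Your incidence-matrix route uses the same test vector in disguise. Your edge weights are $w=R^{T}\mathbf{d}$, and each edge ratio $(R^{T}Q\mathbf{d})_e/(R^{T}\mathbf{d})_e$ is a weighted average of the two vertex ratios. So you reach the same intermediate conclusion ($m'_u=q$ for all $u$). The cost is three extra steps: that $RR^{T}$ and $R^{T}R$ share their nonzero spectrum, that $L(G)$ is connected, and the averaging step. The detour buys nothing extra for this statement.

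Your finishing squeeze $\Delta+\delta\le q\le\Delta+\delta$, with propagation along edges, is the standard way to conclude, and it is complete. It shows every edge joins a degree-$\Delta$ vertex to a degree-$\delta$ vertex. That gives regularity if $\Delta=\delta$ and a bipartition by degree classes otherwise.

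In the converse, state explicitly that a positive eigenvector of a nonnegative matrix belongs to its spectral radius (Lemma~\ref{equitable-2}). That is what justifies $q=a+b$ from $Q\mathbf{d}=(a+b)\mathbf{d}$. The opening Rayleigh-quotient setup is never used and can be dropped.
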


The upper bound~(\ref{Merris bound}) is effective for
solving  the signless Laplacian spectral Tur\'{a}n-type problem where the signless Laplacian spectral extremal graph is sparse, particularly those  structurally close to the complete split graph,
for example,
linear forests~\cite{Chen2020},
intersecting odd cycles~\cite{Chen2024},
books~\cite{Chen-Jin-Zhang2025},
complete bipartite graphs~\cite{de Freitas-Nikiforov-Patuzzi 2016},
short odd cycles~\cite{Miao-Liu-Xue2023},
fan graphs~\cite{Wang-Zhai2023},
intersecting triangles~\cite{Zhao-Huang-Guo2021}.
 This bound helps characterize the structure of signless Laplacian spectral extremal graphs
through vertex degrees.
However, for signless Laplacian spectral extremal graphs that are structurally similar to the Tur\'{a}n graphs $T_{r}(n)$, the balanced complete $r$-partite graph of order $n$, this upper bound becomes less effective. It is therefore necessary to develop new methods for investigating
$F$-free graphs.

Denote by  $\pi(F)$  the Tur\'{a}n density of a graph $F,$ which  is defined as $\pi(F)=\lim\limits_{n\rightarrow \infty} \frac{ex(n,F)}{\binom{n}{2}}.$
Recently, Zheng, Li and Li~\cite{Zheng  Li Li 2026}  established a general criterion for
signless Laplacian spectral version of the criterion of Keevash,
Lenz and Mubayi~\cite{Keevash Lenz Mubayi 2014} for $\chi(F)\geq 4$  as follows, where $\chi(F)$ is the chromatic number of $F.$
\begin{theorem}[\cite{Zheng Li Li 2026}]\label{theorem of liyongtao}
Let $r\geqslant 3$ and $F$ be a  graph with Tur\'{a}n density
$\pi(F)=1-\frac{1}{r},$ where $\chi(F)=r+1.$ 
For real numbers $0<\epsilon <\frac{1}{2}$
and $\sigma<\frac{\epsilon}{36}.$
Suppose that
there exists $N > 0$ such that for every $n \geqslant  N,$ we have
\begin{eqnarray*}\label{the first inequality}
\begin{vmatrix}\operatorname{ex}(n,F)-\operatorname{ex}(n-1,F)-\pi(F)n\end{vmatrix}\leqslant \sigma n,
\end{eqnarray*}
and
\begin{eqnarray*}\label{the second inequality}
\begin{vmatrix}q(\mathcal{G}_n)-4\mathrm{ex}(n,F)n^{-1}\end{vmatrix}
\leqslant \sigma.
\end{eqnarray*}
Then there exists $n_{0} \in \mathbb{N}$ such that for any $F$-free graph $H$ on $n \geqslant n_{0}$ vertices, we have
$$
q(H)\leqslant  q(\mathcal{G}_n),
$$
where $\mathcal{G}_{n}$ is the collection of all $n$-vertex $F$-free graphs with
minimum degree more than $(\pi(F)-\epsilon)n$ and $q(\mathcal{G}_{n})=\max\{q(G):G\in \mathcal{G}_{n}\}.$ In addition, if the equality holds, then $H \in \mathcal{G}_n$.
\end{theorem}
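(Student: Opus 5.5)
The plan is a vertex-deletion argument in the style of Keevash--Lenz--Mubayi, with the spectral step adapted to $Q$. Let $H$ be an $F$-free graph on $n$ vertices with $q(H)\ge q(\mathcal G_n)$. We want to show that $\delta(H)>(\pi(F)-\epsilon)n$, so that $H\in\mathcal G_n$ and therefore $q(H)=q(\mathcal G_n)$. Suppose this fails. We then run a process: set $H_n=H$, and as long as $H_m$ has a vertex $v$ with $d_{H_m}(v)\le(\pi-\epsilon)m$, put $H_{m-1}=H_m-v$. The process stops at some $m$ with $H_m\in\mathcal G_m$, or with $H_m=\emptyset$ at $m=0$. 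The aim is to show inductively that
$$q(H_{n-t})\ \ge\ q(\mathcal G_{n-t})+c\,t$$
for some constant $c=c(\epsilon)>0$ (roughly $\epsilon$), as long as $n-t\ge N$.

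Two comparisons feed this induction. The first is the \emph{benchmark drop}. From $|q(\mathcal G_m)-4\mathrm{ex}(m,F)/m|\le\sigma$ and $|\mathrm{ex}(m,F)-\mathrm{ex}(m-1,F)-\pi m|\le\sigma m$ we get
$$q(\mathcal G_m)-q(\mathcal G_{m-1})\le \frac{4(\pi m+\sigma m)}{m}-\frac{4\,\mathrm{ex}(m-1,F)}{m(m-1)}+2\sigma\le 2\pi+O(\sigma),$$
using $\mathrm{ex}(m-1,F)\approx\pi m^2/2$. The second is the \emph{actual drop}: whenever $d_{H_m}(v)\le(\pi-\epsilon)m$, we need
$$q(H_m)-q(H_m-v)\le 2(\pi-\epsilon)+O(\sigma)+o(1).$$
Since $\sigma<\epsilon/36$, combining the two gives a gain of at least $c$ per step.

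Once the induction is in place, the conclusion follows in two ways. If the process stops at some $m\ge N$ (and $m\ge n_1$ for a suitable $n_1$), then $H_m\in\mathcal G_m$, so $q(H_m)\le q(\mathcal G_m)$, which contradicts a gain of $c(n-m)>0$ unless $m=n$. If the process runs down to $m$ of order $\eta n$, then
$$2m\ \ge\ q(H_m)\ \ge\ q(\mathcal G_m)+c(n-m)\approx 2\pi m+c(n-m),$$
which fails once $m<cn/(c+2-2\pi)$; this is still $\ge N$ when $n\ge n_0$. In the equality case $q(H)=q(\mathcal G_n)$ the same argument forces $t=0$, i.e.\ $H\in\mathcal G_n$.

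The main obstacle is the actual-drop bound, because it needs control of the Perron vector of $Q(H_m)$. Let $x$ be the unit Perron vector of $Q(H_m)$ and $q=q(H_m)$. By Rayleigh's principle,
$$q(H_m-v)\ \ge\ \frac{q-\sum_{u\sim v}(x_u+x_v)^2}{1-x_v^2}.$$
Expanding with the eigen-equation $(q-d_v)x_v=\sum_{u\sim v}x_u$ gives
$$\sum_{u\sim v}(x_u+x_v)^2=(2q-d_v)x_v^2+\sum_{u\sim v}x_u^2.$$
So it suffices to show $x_u^2\le(1+\delta)/m$ for all $u$, with $\delta$ small compared with $\epsilon$. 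Then $\sum_{u\sim v}x_u^2\le(\pi-\epsilon)(1+\delta)$, the term in $x_v^2$ is $O(1/m)\cdot q=O(1)$ but is offset by the denominator $1-x_v^2$ since $q\approx 2\pi m$, and we obtain a drop of about $2(\pi-\epsilon)+O(\delta)$.

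To bound the entries, I would first use the crude estimate
$$x_u\le\frac{\sum_w x_w}{q-d_u}\le\frac{\sqrt m}{(2\pi-1)m}.$$
Here $q\ge q(\mathcal G_m)\approx2\pi m$, and $2\pi-1\ge 1/3$ exactly because $r\ge3$; this is where that hypothesis enters. This gives $x_u=O(1/\sqrt m)$. I would then bootstrap, using that $q(H_m)\ge 4e(H_m)/m-\sigma$ together with $e(H_m)\le \mathrm{ex}(m,F)$ forces $H_m$ to be nearly regular of degree about $\pi m$ apart from $o(m)$ exceptional vertices. Feeding this back into the eigen-equation should sharpen the bound to $x_u^2\le(1+O(\sqrt\sigma))/m$. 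I expect this bootstrapping, and keeping the constants below the $\sigma<\epsilon/36$ budget, to be the delicate part.
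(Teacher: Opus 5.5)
The paper does not prove this statement (it is quoted from Zheng--Li--Li), so I can only judge your argument on its own terms. The deletion framework is the right shape, but the central estimate fails as written.

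First, a small slip. To make $q(H_m)-q(\mathcal G_m)$ grow along the process you need a \emph{lower} bound on the benchmark drop, not the upper bound you derived. The same two hypotheses, together with $\mathrm{ex}(m-1,F)\le(\pi+o(1))\binom{m-1}{2}$, give $q(\mathcal G_m)-q(\mathcal G_{m-1})\ge 2\pi-6\sigma-o(1)$.

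The real problem is the actual drop. Your Rayleigh inequality and identity give
\[
q(H_m)-q(H_m-v)\le\frac{(q-d_v)x_v^2+S}{1-x_v^2},\qquad S=\sum_{u\sim v}x_u^2,
\]
so the denominator absorbs only one of the two copies of $qx_v^2$. If all you know is $x_v^2\le(1+\delta)/m$, the surviving term $(q-d_v)x_v^2$ can be as large as about $2\pi-d_v/m\ge\pi+\epsilon$. The whole bound is then about $q/m\approx 2\pi$ whatever $d_v$ is, which is no gain over the benchmark. The missing observation is that a low-degree vertex has a small entry. Cauchy--Schwarz in $(q-d_v)x_v=\sum_{u\sim v}x_u$ gives $(q-d_v)x_v^2\le d_vS/(q-d_v)$, so the drop is at most $\frac{q}{q-d_v}\cdot\frac{S}{1-x_v^2}$. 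With $q\ge 2\pi m-O(1)$, $\beta=d_v/m$ and $S\le\beta+o(1)$ (see below), this is at most $\frac{2\pi\beta}{2\pi-\beta}+o(1)\le 2\beta+o(1)$.

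The second gap is the entry bound itself. The pointwise target $x_u^2\le(1+O(\sqrt\sigma))/m$ for all $u$ is false for the very graphs at stake. Take $K_{k-1}\vee T_r(m-k+1)$: it is $F$-free for color-$k$-critical $F$, lies in $\mathcal G_m$, and has $q\ge 2\pi m-O(1)$, so it satisfies everything your bootstrap uses. Yet each universal vertex satisfies $(q-m+1)x_u=\sum_{w\ne u}x_w$, whence $x_u\approx\frac{1}{(1-2/r)\sqrt m}$ (about $3/\sqrt m$ for $r=3$). Your own sketch leaves $o(m)$ exceptional vertices, for which only the crude bound is available. What the drop estimate really needs is the averaged statement $\sum_{u\in U}x_u^2\le|U|/m+o(1)$ for every $U\subseteq V(H_m)$.

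Moreover, the mechanism you propose does not produce even this. The inequality $q(H)\ge 4e(H)/|V(H)|$ holds for every graph, so combining it with $e(H_m)\le\mathrm{ex}(m,F)$ carries no structural information. A genuine stability input is needed. For instance, the spectral stability theorem for $q$ quoted in the introduction (valid for $r\ge 3$) shows that $q(H_m)\ge 2\pi m-O(1)$ forces $H_m$ to be $\eta m^2$-close to $T_r(m)$. With your crude bound $x_u=O(m^{-1/2})$ this gives $\mathbf x^{\mathrm T}Q(T_r(m))\mathbf x\ge 2\pi m-O(\eta m)$. Since the second largest eigenvalue of $Q(T_r(m))$ is about $\pi m$, one gets $\|\mathbf x-m^{-1/2}\mathbf 1\|_2^2=O(\eta)$, which is the averaged bound.

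With both repairs the per-step gain is $2\epsilon-6\sigma-o(1)>0$. The rest of your outline (termination above $N$, the equality case) then goes through.
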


As an application, they proved that for any color-critical graph
$F$ with  $\chi(F) = r + 1 \ge 4$, if $n$ is sufficiently large and $G$
is an $F$-free graph on $n$ vertices, then $q(G) \le q(T_{r}(n))$, with
equality if and only if $G = T_{r}(n)$.
In~\cite{Chen-Jin-Zhang-Zhang2025},  employing
the regularity method,  F\"{u}redi's stability theorem and Theorem~\ref{theorem of liyongtao},
the authors proved that for a given graph $F$ and $r\geqslant 3$, if $ex(n, F) = e(T_r(n))+O(1)$, then
signless Laplacian spectral extremal graphs must be edge extremal graphs
for sufficiently large $n$.

Besides,
in 2026, Zheng, Li and Fan~\cite{Zheng  Li Fan 2026} established  the corresponding stability result  for the signless
Laplacian spectral radius with $\chi(F) \geq 4.$
\begin{theorem}[\cite{Zheng  Li Fan 2026}]\label{stability result  for the signless}
Let $F$ be a graph with chromatic
number $\chi(F) = r + 1 \geq 4.$ For every $\epsilon > 0$, there exist $n_0$
and $\sigma > 0$ such that if $G$ is an $F$-free graph on $n \geq n_0$ vertices
with
$q(G) \geq  (1-\frac{1}{r}-\sigma)2n$, then $G$ can be obtained from $T_{r}(n)$ by
adding and deleting at most
$\epsilon n^2$
edges.
\end{theorem}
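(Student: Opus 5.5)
The plan is to reduce the spectral hypothesis to an edge-count hypothesis and then invoke the classical Erdős–Simonovits stability theorem: every $F$-free graph with at least $(1-\frac1r-\delta)\binom n2$ edges is $\epsilon n^2$-close to $T_r(n)$. The difficulty is that $q(G)$ does not by itself force many edges: a star has $q=n$ but only $n-1$ edges, and the Merris bound in Theorem~\ref{the upper bound of q(G)} only controls $q$ through degrees. So I would first clean $G$ so that it has large minimum degree, and only then compare $q$ with $e(G)$.

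\emph{Step 1 (vertex deletion).} Fix a small $\delta$ with $\sigma\ll\delta\ll\epsilon$. Delete vertices one at a time: while the current graph $G_i$ on $n-i$ vertices has a vertex of degree less than $(1-\frac1r-\delta)(n-i)$, remove it. Let $\mathbf x$ be the unit Perron vector of $Q(G_i)$. Then
$$q(G_i-v)\ \ge\ q(G_i)-\Bigl(d(v)x_v^2+\sum_{u\sim v}(x_u+x_v)^2\Bigr)\Big/(1-x_v^2).$$
Combining this with the eigen-equation $(q-d_v)x_v=\sum_{u\sim v}x_u$ shows that removing a low-degree vertex lowers $q$ by noticeably less than $2(1-\frac1r)$. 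Consequently the normalized quantity $q(G_i)/(2(n-i))$ strictly increases along the process. On the other hand, an $F$-free graph on $m$ vertices satisfies $q\le 2(1-\frac1r+o(1))m$. This upper bound follows by applying the removal lemma to $F\subseteq K_{r+1}(t)$ (using $\chi(F)=r+1$) and then the known bound $q\le 2(1-\frac1r)m$ for $K_{r+1}$-free graphs. The increase of the normalized ratio cannot persist past this ceiling, so the process stops after at most $\delta n$ steps, with a graph $G'$ on $n'\ge(1-\delta)n$ vertices and $\delta(G')\ge(1-\frac1r-\delta)n'$.

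\emph{Step 2 (edges and stability).} From the minimum degree bound, $e(G')\ge(1-\frac1r-\delta)\binom{n'}2$. Since $G'$ is $F$-free, Erdős–Simonovits stability shows that $G'$ is $\frac\epsilon2 n^2$-close to $T_r(n')$. Reinserting the at most $\delta n$ deleted vertices changes at most $\delta n^2$ edges. Passing from $T_r(n')$ to $T_r(n)$ costs $O(\delta n^2)$ further edits. Choosing $\delta$ small relative to $\epsilon$, and then $\sigma$ and $n_0$ accordingly, gives the statement.

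\emph{Main obstacle.} The delicate point is the quantitative single-vertex estimate in Step 1. Concretely, I need: if $d(v)<(1-\frac1r-\delta)n$ and $q\ge(1-\frac1r-\sigma)2n$, then $q(G-v)\ge q(G)-2(1-\frac1r)+c\delta$ for some $c>0$. My first attempt would use the eigen-equation to bound $x_v\le \sum_{u\sim v}x_u/(q-d_v)$. Since $q-d_v\ge(1-\frac1r)n$ is of order $n$, this gives $x_v=O(\max_u x_u\cdot d_v/n)$. I would then control $\sum_{u\sim v}(x_u+x_v)^2$ by $d_v\max_u x_u^2$. To make this work I also need the delocalization estimate $\max_u x_u^2=O(1/n)$, which follows from $q x_u\le d_u x_u+\sqrt{d_u}$ together with $q-d_u\ge q-n=\Omega(n)$ because $r\ge3$; this is exactly where the hypothesis $\chi(F)\ge4$ is used. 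If this direct perturbation bound turns out too lossy, the fallback is to compare instead with $\sum_u d_u^2\ge q\cdot e(G)$-type inequalities, or to delete a whole batch of low-degree vertices at once and use a Rayleigh-quotient argument restricted to the remaining vertex set.
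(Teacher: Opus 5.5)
The paper does not prove this statement. It is quoted from Zheng--Li--Fan as background and is not used in the proof of the main theorem, which concerns $\chi(kK_3)=3$. So your argument has to stand on its own. Your architecture is sound. Step~2 is fine, and the potential-function bookkeeping in Step~1 works provided each deletion raises $q(G_i)/(2(n-i))$ by order $\delta/n$, and $\sigma$ (and the $o(1)$ in your ceiling) are $\ll\delta^2$. The gap is the single-vertex estimate. It carries the whole proof, you do not establish it, and the route you indicate cannot establish it.

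First, your displayed inequality is too lossy even in the model case. With the unit Perron vector,
\[
\sum_{u\sim v}(x_u+x_v)^2=(2q-d_v)x_v^2+\sum_{u\sim v}x_u^2,
\]
and renormalizing the restricted vector gives back $qx_v^2$. So the correct bound is
\[
q(G-v)\ \ge\ q-\frac{(q-d_v)x_v^2+\sum_{u\sim v}x_u^2}{1-x_v^2}.
\]
Your version discards the renormalization gain and adds $d_vx_v^2$. For $T_r(n)$ with $x\equiv n^{-1/2}$ it allows a drop of about $5(1-\frac1r)$. The true drop is about $2(1-\frac1r)$, and you need $2(1-\frac1r)-c\delta$.

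Second, the exact formula does not rescue you. After $(q-d_v)x_v^2=(\sum_{u\sim v}x_u)^2/(q-d_v)\le\frac{d_v}{q-d_v}\sum_{u\sim v}x_u^2$, you need $\sum_{u\sim v}x_u^2\le(1-\frac1r)+O(\delta)$ when $d_v\approx(1-\frac1r-\delta)n$. That is, the Perron mass on $N(v)$ may exceed $d_v/n$ by only $O(\delta)$. Your delocalization $x_u\le\sqrt{d_u}/(q-d_u)$ gives only $x_u^2\le(1-\frac2r-2\sigma)^{-2}n^{-1}$, about $9/n$ for $r=3$. Hence you get $\sum_{u\sim v}x_u^2\le 9d_v/n$, which is useless. $T_r(n)$ is the equality case for the normalized ratio, so any constant-factor loss wipes out the $\Theta(\delta)$ margin. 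Flatness of $\mathbf{x}$ to precision $O(\delta)$ on every set of size about $(1-\frac1r)n$ is essentially a consequence of the stability being proved. So Step~1 is circular as it stands, and your fallbacks are only named, not carried out.

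A smaller, repairable point concerns the ceiling $q\le 2(1-\frac1r+o(1))m$. It does not follow from the removal lemma and the $K_{r+1}$-free bound alone, because deleting $o(m^2)$ edges can lower $q$ by $\Theta(m)$ in general (a star has $m-1$ edges and $q=m$). To repair it, suppose $q(G)>2(1-\frac1r)m$. Then $q(G)\ge(1+\Omega(1))m$ since $r\ge3$, so the Perron vector is delocalized. Evaluating its Rayleigh quotient on $G-D$ shows the deleted edge set $D$ costs at most $4|D|\max_u x_u^2=O(|D|/m)=o(m)$.
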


As  a first application, they firstly
proved that if $r \geq 2,$  $\epsilon>0$ and $G$ is an $n$-vertex graph with $m$ edges
such that $\sum_{v\in V(G)}
d^2(v) \geq 2(1-\frac{1}{r}+\epsilon)mn$, then $G$ contains
a  $T_{r+1}((r+1)t)$ with $t = \Omega _{r,\epsilon} (\log n).$ This generalizes the
Bollob\'{a}s-Erd\H{o}s theorem.
Secondly, they established the spectral extremal results for hypergraphs by showing that for any graph $F$ with $\chi(F) = r + 1$, where $r \geq k \geq 2$, and for any $\varepsilon > 0$, there exists $n_0$ such that if $G$ is a linear $k$-uniform hypergraph on $n \geq n_0$ vertices with $q(G) \geq \left(1 - \frac{1}{r} + \varepsilon\right)\frac{2n}{k - 1}$, then $G$ contains a $k$-expansion of $F$.

Although Theorems~\ref{theorem of liyongtao} and \ref{stability result  for the signless} are powerful new tools for solving signless Laplacian spectral Tur\'{a}n-type problem, they are not applicable for $\chi(F)\leq 3$ and the signless Laplacian spectral extremal graphs  resembling Tur\'{a}n graphs. So it is interesting to consider the signless Laplacian spectral Tur\'{a}n-type problem for $\chi(F)\leq 3$ and the signless Laplacian spectral extremal graph  resembling Tur\'{a}n graphs.




 In 2013, He, Jin and Zhang \cite{He-Jin-Zhang2013} characterized the signless Laplacian spectral extremal graphs for $K_{r+1}$ and $r \geq 2$. In 2024, Zhang and Wang \cite{Zhang-Wang2024} proposed the following problem, which is the  signless Laplacian spectral extremal analogue of  Theorem~\ref{thm-Simonovits}.

\vspace{2mm}
{\noindent\bf Problem.} What is the maximum signless Laplacian spectral radius and what are the corresponding signless Laplacian spectral extremal graphs for $kK_{r+1}$ with $k \geq 2$ and $r \geq 2?$
\vspace{2mm}

Since Theorems~\ref{theorem of liyongtao} and \ref{stability result  for the signless}
fail to cover the case $r=2$, it is worthwhile to investigate the above problem for $r=2$.
In \cite{Zhang-Wang2024}, the authors resolved the problem for $(k,r)=(2,2)$ when $n\geq 44$.
Later, the same authors further settled the case $(k,r)=(3,2)$ for $n\geq 453$ in \cite{Zhang-Wang2024-2}.

In spectral extremal graph theory, partitioning vertices according to large and small eigenvector entries
has become a powerful approach for handling spectral Tur\'{a}n-type problem.
Nevertheless, this vertex partition method is generally  applicable
when
 the edge extremal graphs have $O(n)$ edges
(see, e.g.,~\cite{Chen-Li-Li-Yu-Zhang2023,Chen2024-1,Cioaba Desai Tait 2023,Tait 2019,Tait Tobin 2017}).
By Theorem~\ref{thm-Simonovits},  $\mathrm{ex}(n,kK_{3})=\Theta(n^2)$ for sufficiently large $n$,
which indicates the edge extremal case here is of quadratic order rather than linear order.
In this paper,
we  settle the problem for all $k\geq 2$ and $r=2$ when $n$ is sufficiently large.
Our proof combines the vertex partition technique based on large and small eigenvector entries
with the upper bound~\eqref{Merris bound}.
Our result can be stated as follows.

\begin{theorem}\label{main result}
Let $k\geq 2$ and  $G$ be a $kK_{3}$-free graph of sufficiently large order $n.$
Then
$$q(G)\leq q\left(K_{k-1}\vee T_2(n-k+1)\right),$$
and equality holds if and only if $G=K_{k-1}\vee T_2(n-k+1).$
\end{theorem}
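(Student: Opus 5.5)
Let $G$ be a $kK_3$-free graph of order $n$ that maximizes $q(G)$. Write $H=K_{k-1}\vee T_2(n-k+1)$, and let $\mathbf{x}$ be the unit positive Perron vector of $Q(G)$; after adding edges if necessary we may assume $G$ is connected. The plan is induction on $k$, with the case $k=1$ given by the triangle-free result of He, Jin and Zhang \cite{He-Jin-Zhang2013}, namely $q\le n$ with equality only for $T_2(n)$ (equivalently $K_{1,n-1}$-type extremes are excluded for large $n$). The lower bound is easy: $q(G)\ge q(H)\ge n+2(k-1)-O(1/n)$, which one gets from the quotient matrix of the equitable partition of $H$ (or simply $q\ge 4e(H)/n$, together with a sharper test vector).

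\textbf{Step 1: find a vertex of huge degree.} If $G$ is $(k-1)K_3$-free, induction gives $q(G)\le q(K_{k-2}\vee T_2(n-k+2))<q(H)$. So $G$ contains $k-1$ disjoint triangles. The plan is to show that $G$ has a vertex of degree $n-o(n)$ and then delete it; doing this $k-1$ times produces a set $S$ of $k-1$ dominating-like vertices. To get the vertex, use the eigenvector equation
$$(q-d_u)x_u=\sum_{v\sim u}x_v.$$
Let $u^*$ maximize $x$. Consider a maximum family $\mathcal T$ of disjoint triangles, so $|\mathcal T|=k-1$ and $|V(\mathcal T)|=3(k-1)$. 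The graph $G-V(\mathcal T)$ is triangle-free, hence has at most $(n-3k)^2/4$ edges. Combining this with the Merris bound (Theorem~\ref{the upper bound of q(G)}) applied to the large part, and with $q\ge n+2k-2-o(1)$, yields the following dichotomy. Either some vertex has degree at least $(1-\varepsilon)n$, or $\sum_u d_u^2$ is too small, since $q\le \max(d_u+m_u)$, and $d_u+m_u$ near $n+2k$ forces both $d_u$ and the average neighbor degree to be large. I would partition vertices into those with large entries $L=\{x_v\ge \eta x_{u^*}\}$ and the rest, show $|L|$ is bounded when restricted to vertices of degree $\ge(1-\varepsilon)n$, and show that vertices of large entry have large degree via $(q-d_u)x_u\le d_u x_{u^*}$. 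This gives $d_u\ge q/(1+1/\eta)$, which is not enough by itself. So I would instead refine this with the triangle structure: a vertex in a triangle of $\mathcal T$ whose degree is below $(1-\varepsilon)n$ can be swapped against the triangle-free part.

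\textbf{Step 2: the structure is $K_{k-1}\vee$ (bipartite).} Once the set $S$ of vertices of degree $\ge (1-\varepsilon)n$ is found, show $|S|=k-1$. If $|S|\ge k$, then common neighborhoods of size $n-O(\varepsilon n)$ let us build $k$ disjoint triangles greedily, since $G-S$ has many edges because $q$ is large. If $|S|\le k-2$, then the $q$ bound fails: $G-S$ must contain a triangle, and with induction-style counting, the contribution $\sum_{v\in S}x_v$ is too small. Then $G-S$ is triangle-free, since a triangle in $G-S$ together with $k-1$ triangles built from $S$ and the dense remainder gives $kK_3$. By maximality, every vertex of $S$ is adjacent to all vertices, and $G-S$ is a maximal-$q$ triangle-free graph in the join. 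Finally, compare $q(K_{k-1}\vee B)$ over triangle-free $B$ on $n-k+1$ vertices via the quotient/characteristic polynomial. $B$ should be complete bipartite, since adding edges increases $q$ and maximal triangle-free graphs with large $q$ are complete bipartite. Among $K_{a,b}$, balanced is optimal by an explicit computation of the largest root of the cubic quotient polynomial.

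\textbf{Main obstacle.} I expect the hard step to be Step 1, locating the $k-1$ near-dominating vertices. The reason is that the extremal graph is dense, so eigenvector entries are all of order $1/\sqrt n$ and the usual large/small entry partition does not separate vertices cleanly. My first attempt would be to combine $q\le\max_u(d_u+m_u)$ with $\sum_u d_u^2\ge$ roughly $q\cdot 2e/ n$-type inequalities and the bound $e(G)\le ex(n,kK_3)$ from Theorem~\ref{thm-Simonovits}. Vertices $u$ attaining $d_u+m_u\ge n+2k-3$ must have $d_u$ close to $n$ or neighbors of average degree exceeding $n/2+2k$. Counting how many vertices can have degree $>n/2+2k$ without creating $kK_3$ then bounds the second case.
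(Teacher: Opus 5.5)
Your proposal has a genuine gap at Step~1, the step you flag yourself: no argument is given that actually produces the $k-1$ near-universal vertices. The only concrete inequality, $d_u\ge q/(1+1/\eta)$, you concede is too weak. The ``swap against the triangle-free part'' and the count of vertices of degree above $n/2+2k$ are ideas, not arguments.

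The plan also rests on a wrong quantitative picture. The bound $4e(H)/n$ gives only $n+2k-2$, but in fact $q(H)>n-2+\sqrt{(k-1)n}$ (Lemma~\ref{lower bound}). Nor is the Perron vector of $H$ flat: its eigen-equations make the entries on the $k-1$ universal vertices about $\sqrt{n/(k-1)}$ times larger than all others, so a large/small entry split separates them cleanly. With only your bound, your Step~2 claim that $|S|\ge k$ is impossible (``$G-S$ has many edges because $q$ is large'') is false. Indeed, $K_{2k-1}\vee\overline{K}_{n-2k+1}$ is $kK_3$-free, since every triangle uses two clique vertices. It has $q\approx n+4k-4>n+2k-2$ and $2k-1\ge k$ universal vertices, yet the rest has no edges.

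The paper uses the $\sqrt n$ excess essentially, in three steps:
\begin{enumerate}
\item Let $z$ maximize $d(z)+\frac{1}{d(z)}\sum_{v\sim z}d(v)$. The Merris bound and the Erd\H{o}s--Gallai bound force about $\sqrt{(k-1)n}/2$ independent edges in $G[N(z)]$. Hence $G-z$ is $(k-1)K_3$-free, $K_1\vee(G-z)$ is admissible, and maximality gives $\Delta(G)=n-1$ (Lemma~\ref{maximum degree is n-1}).
\item The maximum-entry vertex is then universal. Comparing its eigen-equation with that of $u$ gives $d(u)>\frac{4(n-2)}{3}-\frac{q}{3}$ whenever $x_u>\frac34\max_v x_v$ (Lemma~\ref{larger component derive larger degree}).
\item Turning this into $d(u)\ge n-\epsilon n$ needs an upper bound $q<(1+\epsilon)n$, which you never address. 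It does not follow from the Merris bound or from $e(G)\le ex(n,kK_3)$; the paper takes it from the spectral Bollob\'as--Erd\H{o}s lemma (Lemma~\ref{upper bound of q}).
\end{enumerate}

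Step~2 is likewise only sketched at its hardest point. Excluding $|S|\le k-2$ is the most delicate part of the paper (Lemma~\ref{the eaxct size of A}). There one takes $v^*$ of maximum entry in $B$ and splits on the matching number $s$ of $G[N_B(v^*)]$. The range $1\le s\le k-1-|A|$ is handled by an explicit edge-switching estimate of $\mathbf{x}^{\mathrm{T}}(Q(G'')-Q(G))\mathbf{x}$. The range $k-|A|\le s<3k$ is handled by a Ramsey argument that needs the degree deficit of Lemma~\ref{degreecondition}. Your sentence ``$\sum_{v\in S}x_v$ is too small'' does not replace this.

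Ruling out $|S|\ge k$ also needs the $\sqrt n$ excess, through $q(K_c\vee K_{a,n-a-c})<n+\frac{\sqrt n}{2}$ for small $a$ (Lemma~\ref{the upper bound of q}). Further, ``maximal triangle-free graphs with large $q$ are complete bipartite'' is not a valid principle, since edge-maximal triangle-free graphs need not be bipartite. What works is symmetrizing every vertex of $B\setminus(N_B(v^*)\cup\{v^*\})$ to $v^*$, using that $N_B(v^*)$ is independent (Lemma~\ref{symmetrize and adding edges}).

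Two minor points. Every $K_{a,n-a}$, including the star, has $q=n$, so the triangle-free extremal graph is not unique. And the induction on $k$ is unnecessary: extremality and connectivity already force a copy of $(k-1)K_3$, since adding an edge raises the triangle packing number by at most one.
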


The organization of this paper is outlined as follows. In Section~2, some notations and useful lemmas are provided.   In Section~3, we give the proof of Theorem~\ref{main result}.  In Section~4, a generalized conjecture is proposed.

\section{Preliminaries}
In this section, we will list some symbols and useful lemmas.
Let $G=(V(G),E(G))$ be a simple graph.
We call $|V(G)|$ the {\it order} of $G $ and $|E(G)|$ the {\it size} of $G,$
denoted by $e(G).$
For two vertices $u,v\in V(G),$ $u$ and $v$ are {\it adjacent} if $uv\in E(G).$
The set of neighbors of $v$ in $G$ will be written as $N_{G}(v).$
Denote by $d_{G}(v)$  the {\it degree} of vertex $v\in V (G)$ and $\Delta(G)$  the {\it maximum degree} of vertices in $G.$
For two disjoint vertex sets $A,B\subseteq V(G),$  $e(A)$ denotes the number of edges of $G$ whose endpoints are both in $A,$ and
$e(A,B)$ denotes the number of edges of $G$ with one end vertex in $A$ and the other  in $B.$
The {\it induced subgraph} $G[A]$   is a subgraph of $G$ with vertex set $A$ and edge set consisting of all edges with both endpoints in $A.$
For $S\subseteq V(G),$ denote by $G-S$ the graph obtained from $G$ by deleting all vertices in $S$ and  all edges incident to the vertices in $S$. Specially, when $S=\{v\},$ we use $G-v$ instead of $G-\{v\}.$

For two vertex disjoint graphs $G$ and $H,$ denote by $G\cup H$ the {\it union} of $G$ and $H,$
denote
the {\it join} of $G$ and $H$
by $G\vee H,$ which is obtained from $G$ and $H$ by adding all edges between  $V(G)$ and $V(H).$ Especially, write $kG$ for $k$ disjoint union of $G$ for brevity.
Denote by $\overline{G}$ the {\it complement} of $G.$ Readers are referred to the book~\cite{Bondy Murty2008} for undefined notations and definitions.

The independence number of $G$, denoted $\alpha(G)$, is the order of the largest independent
set in $G$.
For positive integers $k$ and $\ell$, the Ramsey number $R(k,\ell)$ is the smallest integer $n$
such that if we arbitrarily color the edges of the complete graph $K_{n}$ with $2$ colors, then
it contains a monochromatic $K_{k}$ in the first color or a monochromatic $K_{\ell}$
in the second
color. If the edges in the first color are interpreted as a graph $G$ and those in the second
color as its complement $\overline{G}$, then $R(k, \ell)$ can be defined equivalently as the smallest $n$ such
that every graph on $n$ vertices contains $K_{k}$ or has independence number $\alpha(G) \geq  \ell$.
\begin{lemma}[\cite{Greenwood  Gleason 1955}]\label{Ramsey theorem}
$R(k,\ell)\leq \binom{k+\ell-2}{k-1}.$
\end{lemma}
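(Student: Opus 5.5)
The plan is to prove the Ramsey bound $R(k,\ell)\le\binom{k+\ell-2}{k-1}$ by induction on $k+\ell$, using the standard Erd\H{o}s--Szekeres recursion
\[
R(k,\ell)\le R(k-1,\ell)+R(k,\ell-1).
\]
The bound then follows from Pascal's identity
\[
\binom{k+\ell-3}{k-2}+\binom{k+\ell-3}{k-1}=\binom{k+\ell-2}{k-1}.
\]

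For the base cases, note that $R(1,\ell)=R(k,1)=1$. A single vertex is trivially a $K_1$, and also an independent set of size $1$, so the bound holds with $\binom{\ell-1}{0}=\binom{k-1}{k-1}=1$. One can equally start from $R(2,\ell)=\ell$ and $R(k,2)=k$, which match $\binom{\ell}{1}$ and $\binom{k}{k-1}$.

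For the recursion, I will use the graph/complement formulation given just before the lemma. Let $G$ have $N=R(k-1,\ell)+R(k,\ell-1)$ vertices and fix a vertex $v$. The remaining $N-1$ vertices split into $N_G(v)$ and its complement $M$. By pigeonhole, either $|N_G(v)|\ge R(k-1,\ell)$ or $|M|\ge R(k,\ell-1)$.

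In the first case, $G[N_G(v)]$ contains either a $K_{k-1}$, which together with $v$ gives a $K_k$, or an independent set of size $\ell$. In the second case, $G[M]$ contains either a $K_k$, or an independent set of size $\ell-1$, which together with $v$ gives an independent set of size $\ell$, since $v$ has no neighbours in $M$.

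This proves the recursion, and the induction hypothesis applied to both terms finishes the argument. There is no real obstacle here. The only point needing care is that the recursion uses the finiteness of the smaller Ramsey numbers, which is supplied by the induction hypothesis itself.
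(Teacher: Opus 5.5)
Your proof is correct. The paper gives no proof of its own and simply cites Greenwood and Gleason, whose argument is exactly this Erd\H{o}s--Szekeres recursion $R(k,\ell)\le R(k-1,\ell)+R(k,\ell-1)$ combined with Pascal's identity; the one implicit step, that a graph on more than $R(k-1,\ell)$ vertices also has the Ramsey property, follows by passing to an induced subgraph.
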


\begin{lemma}[\cite{Erdos Gallai 1959,Moon1968}]\label{dege number of disjoint matchings}
Let $k \ge 2$ and $G$ be a $kK_2$-free graph of order $n\geq 2k-1$. Then
\[
e(G) \le
\begin{cases}
(k-1)n - \dfrac{k(k-1)}{2}, & \text{for } n \ge \dfrac{5k-2}{2}; \\[1em]
\dbinom{2k-1}{2}, & \text{for } 2k-1 \le n < \dfrac{5k-2}{2}.
\end{cases}
\]
If $n > \dfrac{5k-2}{2},$ then equality holds if and only if $G = K_{k-1} \vee \overline{K}_{n-k+1}$; if $n = \dfrac{5k-2}{2},$ then equality holds if and only if $G \in \{K_{2k-1} \cup \overline{K}_{n-2k+1}, K_{k-1} \vee \overline{K}_{n-k+1}\}$; if $2k-1 \le n < \dfrac{5k-2}{2},$ then equality holds if and only if $G = K_{2k-1} \cup \overline{K}_{n-2k+1}$.
\end{lemma}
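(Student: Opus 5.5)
The plan is to use the Tutte--Berge formula to reduce to a one-parameter family of extremal candidates, and then compare them by convexity. We may assume $G$ is edge-maximal among $kK_2$-free graphs on $n$ vertices, since adding edges only increases $e(G)$. Then the matching number satisfies $\nu(G)=k-1$ exactly: we have $\nu(G)\le k-1$, and if $\nu(G)<k-1$ we could add an edge without creating $kK_2$, because $n\ge 2k-1$ leaves uncovered vertex pairs available.

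By the Tutte--Berge formula there is a set $S$ with $|S|=s$ such that $o(G-S)-s=n-2(k-1)$, where $o(\cdot)$ denotes the number of odd components. Writing $q=o(G-S)$, this gives $q=n+s-2k+2$.

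Edge-maximality lets us pin down the structure of $G$, because each of the following additions preserves the Tutte--Berge deficiency and hence $\nu\le k-1$:
\begin{itemize}
\item[(i)] making $S$ a clique and joining it completely to $V(G)\setminus S$;
\item[(ii)] making every component of $G-S$ a clique;
\item[(iii)] absorbing any even component of $G-S$ into an odd one.
\end{itemize}
After these additions, $G-S$ consists of exactly $q$ odd cliques whose orders sum to $n-s$. For a fixed total size, $\sum\binom{a_i}{2}$ over odd $a_i$ is maximized by taking $q-1$ singletons and one clique of order $n-s-(q-1)=2k-1-2s$. Here $s$ ranges over $0,\dots,k-1$. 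Hence
$$e(G)\le f(s):=\binom{s}{2}+s(n-s)+\binom{2k-1-2s}{2},$$
with equality only if $G=K_s\vee\big(K_{2k-1-2s}\cup \overline{K}_{n-2k+1+s}\big)$.

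The remaining step is to maximize $f$ over $s\in\{0,\dots,k-1\}$. The function $f$ is a quadratic in $s$ whose $s^2$ coefficient is $\tfrac12-1+2=\tfrac32>0$, so it is strictly convex. Its maximum is therefore attained only at an endpoint:
$$f(0)=\binom{2k-1}{2},\qquad f(k-1)=\binom{k-1}{2}+(k-1)(n-k+1)=(k-1)n-\frac{k(k-1)}{2}.$$
A direct computation gives
$$f(k-1)-f(0)=(k-1)\Big(n-\frac{5k-2}{2}\Big).$$
This yields the three regimes in the statement, including the tie at $n=\frac{5k-2}{2}$. The graph at $s=0$ is $K_{2k-1}\cup\overline{K}_{n-2k+1}$, and the graph at $s=k-1$ is $K_{k-1}\vee\overline{K}_{n-k+1}$, which gives the stated equality cases.

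I expect the main obstacle to be the uniqueness claim rather than the bound itself. One must check that equality in each reduction step (i)--(iii), and in the distribution of odd component sizes, forces exactly the graph described. In particular, for the uniqueness one should argue with the original $G$, not only the saturated one: any non-edge among the added edges would make $e(G)$ strictly smaller than $f(s)$, so equality means $G$ was already saturated. The size distribution is forced because moving two vertices from a smaller odd clique into the larger one strictly increases the edge count unless all smaller cliques are singletons. The degenerate cases $s=0$ and $2k-1-2s=1$ also need separate care, since at $s=k-1$ the big clique is a single vertex.
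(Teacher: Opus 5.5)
Your proof is correct, and there is no in-text proof to compare it with. The paper does not prove this lemma; it quotes it from Erd\H{o}s--Gallai and Moon and uses it only as a black box in Claim~\ref{independent edges}.

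Your route is the standard modern derivation:
saturate $G$;
take a Tutte--Berge barrier $S$ with $q=n+s-2k+2$ odd components;
force $G=K_s\vee(K_{a_1}\cup\cdots\cup K_{a_q})$ with all $a_i$ odd;
concentrate the odd parts;
compare the two endpoints of the strictly convex quadratic $f$ on $\{0,\dots,k-1\}$.
Its advantage is that the bound and all three equality regimes, including the tie at $n=\frac{5k-2}{2}$, come out of one computation.

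The points you flag as obstacles are already settled by what you wrote:
\begin{itemize}
\item A $kK_2$-free graph with the maximum number of edges is automatically saturated, since a $kK_2$-free proper supergraph would have more edges. So (i)--(iii) add nothing to an extremal graph.
\item Moving two vertices from an odd part $a_j\ge 3$ into a part $a_i\ge a_j$ changes $\sum\binom{a_i}{2}$ by $2(a_i-a_j)+4>0$. This forces the distribution $(2k-1-2s,1,\dots,1)$.
\item $q\ge s+1\ge 1$, so an odd component exists to absorb even ones in step (iii).
\item $n-s-q=2(k-1-s)\ge 0$ gives $s\le k-1$. The case $s=k-1$ simply means every component of $G-S$ is a single vertex, so no separate treatment is needed.
\end{itemize}
The converse, that both listed graphs are $kK_2$-free, follows from the easy direction of Tutte--Berge: $\nu\le\frac12(n+s-q)=k-1$.
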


\begin{lemma}[\cite{Cvetkovic Rowlinson Simic 2007}]\label{subgraph}
Let $G$ be a connected graph. If $H$ is a proper subgraph of $G$, then
\[
q(H) < q(G).
\]
\end{lemma}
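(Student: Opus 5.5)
The statement immediately above is Lemma~\ref{subgraph}, the strict monotonicity of $q$ under passing to proper subgraphs of a connected graph. I would prove it by Perron--Frobenius applied to $Q(G)$.

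Since $G$ is connected, $Q(G)=D(G)+A(G)$ is nonnegative and irreducible, because its off-diagonal support is exactly the adjacency pattern of $G$. Hence $q(G)$ is a simple eigenvalue with a positive unit eigenvector $\mathbf{x}$, and $q(G)=\max_{\|\mathbf{y}\|=1}\mathbf{y}^{T}Q(G)\mathbf{y}$.

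First I reduce to a spanning subgraph. If $H$ omits some vertices, pad $H$ with isolated vertices to obtain $H'$ on $V(G)$. Then $Q(H')=Q(H)\oplus 0$, so $q(H')=q(H)$. Moreover $H'$ is still a proper subgraph of $G$: either $H$ misses a vertex of the connected graph $G$, hence misses an edge when $|V(G)|\ge 2$, or it already misses an edge. The trivial one-vertex case has no proper nonempty subgraph worth treating.

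Now let $H$ be spanning with $E(H)\subsetneq E(G)$. The difference $Q(G)-Q(H)=\sum_{uv\in E(G)\setminus E(H)}(\mathbf{e}_u+\mathbf{e}_v)(\mathbf{e}_u+\mathbf{e}_v)^{T}$ is positive semidefinite and entrywise nonnegative. Take a nonnegative unit Perron vector $\mathbf{z}$ of $Q(H)$, which exists since $Q(H)\ge 0$ even when $H$ is disconnected. Then
\[
q(H)=\mathbf{z}^{T}Q(H)\mathbf{z}\le \mathbf{z}^{T}Q(G)\mathbf{z}\le q(G).
\]
For strictness, suppose equality holds throughout. Then $\mathbf{z}$ attains the Rayleigh maximum for the symmetric matrix $Q(G)$, so it is an eigenvector for $q(G)$. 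By simplicity of $q(G)$, $\mathbf{z}$ is a multiple of $\mathbf{x}$ and hence strictly positive. But the first inequality is then strict: any missing edge $uv$ contributes $(z_u+z_v)^2>0$ to $\mathbf{z}^{T}(Q(G)-Q(H))\mathbf{z}$. This contradiction gives $q(H)<q(G)$.

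The only delicate point is this strictness step, where simplicity together with positivity of the Perron vector of the irreducible matrix $Q(G)$ is essential. Everything else is routine.
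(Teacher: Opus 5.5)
Your proof is correct. The paper gives no proof of this lemma and simply cites it. The standard justification is the Perron--Frobenius monotonicity theorem: if $0\le B\le A$ entrywise, $A$ is irreducible and $B\neq A$, then $\rho(B)<\rho(A)$. Here it is applied to $Q(H)$, padded by isolated vertices, and $Q(G)$. Your Rayleigh-quotient argument is a self-contained proof of exactly this special case: it uses the symmetry of $Q$, together with simplicity and positivity of the Perron vector of $Q(G)$, in place of the general theorem.
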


The {\it spectral radius} of a  nonnegative real matrix $A,$
denoted by $\lambda(A),$ is  the largest  eigenvalue of $A.$
Since $Q(G)$ is a
nonnegative real matrix, by the Perron-Frobenius theorem, there is a
nonnegative eigenvector corresponding to  $q(G)$ with maximum component $1.$ We call this nonnegative eigenvector 
the {\it Perron vector} of $G.$

\begin{lemma}[Lemma~2.3.1,\cite{Brouwer2012}]\label{equitable-1}
Let $A$ be an $n\times n$ symmetric real matrix. 
Assume that $B$ is the  quotient matrix of $A$ with respect to an equitable partition $\pi=\{X_1, \ldots, X_m\}$ and $S$ is the $n\times m$ matrix whose $j$-th column is the characteristic vector of $X_{j}$ $(j=1,\ldots,m).$ If $\mathbf{v}$ is an eigenvector of $B$ for an eigenvalue $\lambda,$ then $S\mathbf{v}$ is an eigenvector of $A$ for the same eigenvalue $\lambda.$
\end{lemma}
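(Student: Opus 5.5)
The plan is to reduce the statement to the single matrix identity $AS=SB$, which is just a restatement of what it means for $\pi$ to be equitable. Recall the setup. The rows and columns of $A$ are indexed by a ground set $V$ of size $n$ partitioned into nonempty parts $X_1,\ldots,X_m$. Equitability means that for all $i,j$ and every $u\in X_i$, the block row sum $\sum_{w\in X_j}A_{uw}$ takes a common value $b_{ij}$, independent of the choice of $u\in X_i$. The quotient matrix is $B=(b_{ij})$.

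\textbf{Step 1: verify $AS=SB$ entrywise.} Fix a row $u\in X_i$ and a column $j$.
\begin{itemize}
\item The $(u,j)$ entry of $AS$ is $\sum_{w}A_{uw}S_{wj}=\sum_{w\in X_j}A_{uw}=b_{ij}$, since $S_{wj}$ is the indicator of $w\in X_j$.
\item The $(u,j)$ entry of $SB$ is $\sum_{\ell}S_{u\ell}b_{\ell j}$. The only nonzero term is $\ell=i$, because $u$ lies in exactly one part, namely $X_i$. So this entry also equals $b_{ij}$.
\end{itemize}
Hence $AS=SB$. Symmetry of $A$ is not needed for this direction.

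\textbf{Step 2: the eigenvector equation.} Let $B\mathbf{v}=\lambda\mathbf{v}$ with $\mathbf{v}\neq\mathbf{0}$. Then
$$A(S\mathbf{v})=(AS)\mathbf{v}=S(B\mathbf{v})=\lambda S\mathbf{v}.$$

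\textbf{Step 3: $S\mathbf{v}$ is nonzero.} This is the only point that needs a moment's care, since an eigenvector must be nonzero. For $u\in X_j$ we have $(S\mathbf{v})_u=v_j$. Because every part is nonempty, each coordinate $v_j$ of $\mathbf{v}$ actually appears as some entry of $S\mathbf{v}$. Since $\mathbf{v}\neq\mathbf{0}$, some $v_j\neq 0$, so $S\mathbf{v}\neq\mathbf{0}$. Equivalently, the columns of $S$ have disjoint nonempty supports, so $S$ has full column rank $m$.

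It follows that $S\mathbf{v}$ is an eigenvector of $A$ for the eigenvalue $\lambda$, as claimed.
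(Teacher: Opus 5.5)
Your proposal is correct and is the standard argument behind the cited Lemma~2.3.1 of Brouwer--Haemers; the paper gives no proof of its own, only the citation. Equitability is exactly the identity $AS=SB$, which gives $A(S\mathbf{v})=S(B\mathbf{v})=\lambda S\mathbf{v}$, and your check that $S\mathbf{v}\neq\mathbf{0}$ (because every part is nonempty) is a worthwhile detail that is usually left implicit.
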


\begin{lemma}[Theorem~8.3.4,\cite{Horn Johnson 2013}]\label{equitable-2}
Let $A$ be a nonnegative real matrix. If there is a positive eigenvector $\mathbf{x}$ and a nonnegative real number $\lambda$ such that either $A\mathbf{x}=\lambda \mathbf{x}$ or $\mathbf{x}^{\mathrm{T}}A=\lambda \mathbf{x}^{\mathrm{T}},$ then $\lambda=\rho(A).$
\end{lemma}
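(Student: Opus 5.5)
The plan is a diagonal similarity that turns $\mathbf{x}$ into the all-ones vector, followed by a row-sum (norm) bound on the spectral radius. Here $A$ is a square $n\times n$ nonnegative matrix and $\rho(A)$ is the largest modulus of its eigenvalues. We treat the case $A\mathbf{x}=\lambda\mathbf{x}$ with $\mathbf{x}>0$ first, then reduce the left-eigenvector case to it by transposition.

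\emph{Lower bound.} Since $\lambda$ is an eigenvalue of $A$ with eigenvector $\mathbf{x}\neq \mathbf{0}$, the definition of the spectral radius gives $|\lambda|\le \rho(A)$. Because $\lambda\ge 0$ by hypothesis, this is $\lambda\le\rho(A)$.

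\emph{Upper bound.} Let $D=\mathrm{diag}(x_1,\ldots,x_n)$, which is invertible since every $x_i>0$, and set $B=D^{-1}AD$. Its entries are $b_{ij}=a_{ij}x_j/x_i\ge 0$. The $i$-th row sum of $B$ is
$$\sum_j b_{ij}=\frac{1}{x_i}\sum_j a_{ij}x_j=\frac{(A\mathbf{x})_i}{x_i}=\lambda.$$
So $B$ is nonnegative with all row sums equal to $\lambda$. Hence the induced $\infty$-norm satisfies $\|B\|_\infty=\max_i\sum_j|b_{ij}|=\lambda$. Every eigenvalue of a matrix is bounded in modulus by any induced matrix norm, so $\rho(B)\le\lambda$. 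Alternatively, take an eigenvector $\mathbf{y}$ of $B$ for an eigenvalue $\mu$ and look at a coordinate where $|y_i|$ is maximal: this gives $|\mu||y_i|\le\sum_j b_{ij}|y_j|\le\lambda|y_i|$. Similar matrices have the same spectrum, so $\rho(A)=\rho(B)\le\lambda$. Combined with the lower bound, $\lambda=\rho(A)$.

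\emph{Left-eigenvector case.} If $\mathbf{x}^{\mathrm T}A=\lambda\mathbf{x}^{\mathrm T}$, transpose to get $A^{\mathrm T}\mathbf{x}=\lambda\mathbf{x}$. The matrix $A^{\mathrm T}$ is nonnegative and $\mathbf{x}>0$, so the first case gives $\lambda=\rho(A^{\mathrm T})$. Since $A$ and $A^{\mathrm T}$ have the same characteristic polynomial, $\rho(A^{\mathrm T})=\rho(A)$, and the claim follows.

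The only step needing care is the upper bound, and the row-sum argument after the similarity handles it directly. No irreducibility or Perron–Frobenius theory is required, because strict positivity of $\mathbf{x}$ is what makes $D$ invertible.
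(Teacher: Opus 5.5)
Your proof is correct and complete. The diagonal similarity $B=D^{-1}AD$ with $D=\mathrm{diag}(x_1,\ldots,x_n)$ makes every row sum equal to $\lambda$, which gives $\rho(A)=\rho(B)\le\|B\|_\infty=\lambda\le\rho(A)$, and transposition handles the left-eigenvector case.

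The paper gives no proof of its own here; the lemma is quoted from Horn and Johnson, and your argument is the standard one. One small addition: the hypothesis $\lambda\ge 0$ is automatic, since $\lambda x_i=(A\mathbf{x})_i\ge 0$ with $x_i>0$.
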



\begin{lemma}[\cite{Zheng  Li Fan 2026}]\label{upper bound of q}
 If $k \ge 2$, $\varepsilon > 0$ and $G$ is an $n$-vertex graph with
\[
q(G) \ge \left(1 - \frac{1}{k} + \varepsilon\right) 2n,
\]
then $G$ contains a copy of $T_{k+1}((k+1)t)$ with $t = \Omega_{k,\varepsilon}(\log n)$.
\end{lemma}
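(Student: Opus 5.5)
The plan is to go through a supersaturation statement for cliques and then finish with the classical result of Nikiforov, itself a quantitative form of Bollob\'as--Erd\H{o}s--Simonovits. That result says: if an $n$-vertex graph contains at least $cn^{k+1}$ copies of $K_{k+1}$, then it contains $T_{k+1}((k+1)t)$ with $t=\Omega_{k,c}(\log n)$. So it suffices to show that $q(G)\ge (1-\frac1k+\varepsilon)2n$ forces $c(k,\varepsilon)n^{k+1}$ copies of $K_{k+1}$. I will use two standard facts about $q$. First, $q(G)\le \max_{uv\in E(G)}(d(u)+d(v))$, which follows from $Q=RR^{\mathrm T}$ with $R$ the incidence matrix, and $R^{\mathrm T}R=2I+A(L(G))$. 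Second, by the Rayleigh quotient with the unit Perron vector $\mathbf x$, $q(G)=\sum_{uv\in E(G)}(x_u+x_v)^2$.

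\textbf{Step 1 (cleaning).} I will pass to an induced subgraph $H$ on $n'\ge c_1 n$ vertices with $q(H)\ge (1-\frac1k+\frac{\varepsilon}{2})2n'$ and with every vertex of $H$ having degree at least $(1-\frac1k+\frac{\varepsilon}{4})n'$ in $H$. To do this I will delete, one at a time, vertices whose current degree is below this threshold. Deleting a vertex $v$ lowers $q$ by at most $2d(v)x_v^2+\ldots$, and this loss is controlled by comparing $q(H)$ against the proportionally shrinking target $(1-\frac1k+\cdot)2|H|$. The point is that a low-degree vertex contributes less than the target slope of roughly $2(1-\frac1k)$ per deleted vertex. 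This is the analogue of the usual min-degree reduction for $\lambda$, and I expect it to be \emph{the main obstacle}. The reason is that for $q$ the contribution of a vertex is $d(v)x_v^2+\sum_{w\sim v}(\cdot)$ rather than $\lambda x_v^2$, and the loss must be charged carefully. If this deletion argument proves too lossy, the fallback is to bypass Step 1 by averaging the edge-degree bound over $\mathbf x$. This would give a set of $\Omega(n^2)$ edges $uv$ with $d(u)+d(v)\ge (2-\frac2k+\varepsilon)n$.

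\textbf{Step 2 (counting cliques).} Once the minimum degree in $H$ exceeds $(1-\frac1k+\frac{\varepsilon}{4})n'$, I will build $K_{k+1}$'s greedily. Any $j\le k$ vertices forming a clique have at least $n'-j(\frac1k-\frac\varepsilon4)n'\ge \frac{j\varepsilon}{4}n'$ common neighbours. Multiplying these choices over $j=1,\dots,k$ and dividing by $(k+1)!$ gives at least $c_2 (n')^{k+1}\ge c_2c_1^{k+1}n^{k+1}$ copies of $K_{k+1}$. In the fallback version I will do the same count starting from the $\Omega(n^2)$ good edges $uv$, where $|N(u)\cap N(v)|\ge (1-\frac2k+\varepsilon)n$. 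I will then iterate inside common neighbourhoods, using degree-sum bounds for those vertices via the same Rayleigh-quotient averaging.

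\textbf{Step 3.} I will apply Nikiforov's theorem to $G\supseteq H$ with $c=c_2c_1^{k+1}$. This yields $T_{k+1}((k+1)t)$ with $t=\Omega_{k,\varepsilon}(\log n)$, as claimed in Lemma~\ref{upper bound of q}.
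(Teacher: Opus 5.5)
The paper does not prove this lemma; it is quoted from Zheng--Li--Fan, so there is no internal argument to compare against. On its own terms, your Steps~2 and~3 are fine: the greedy count of $K_{k+1}$'s from minimum degree $(1-\frac1k+\frac\varepsilon4)n'$ works, and so does the appeal to Nikiforov's many-cliques theorem. Step~1, however, is where all the difficulty lies, and as sketched it does not go through. Let $\mathbf{x}$ be the unit Perron vector of $H$ and $d=d_H(v)$. Restricting $\mathbf{x}$ to $V(H)\setminus\{v\}$ gives
\[
q(H)-q(H-v)\le \frac{(q-d)x_v^2+\sum_{u\sim v}x_u^2}{1-x_v^2},
\]
and Cauchy--Schwarz on $(q-d)x_v=\sum_{u\sim v}x_u$ only turns this into roughly $\frac{q}{q-d}\sum_{u\sim v}x_u^2$. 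The factor $\sum_{u\sim v}x_u^2$ is the Perron mass on $N(v)$, and it is not controlled by $d(v)$ at all. For example, a vertex on the independent side of $K_s\vee\overline{K}_{n-s}$ has degree $s$, yet deleting it costs about $1$ however small $s$ is. For $d$ just below $(1-\frac1k)m$ and $q$ near $2(1-\frac1k)m$, the only bound these estimates give is about $2$. That exceeds the target slope $2(1-\frac1k+\frac\varepsilon2)$, so your claim that a low-degree vertex ``contributes less than the target slope'' is unproved. A strict inequality would not suffice anyway: you need a uniform margin $\eta>0$ per deletion. The invariant $q(H)\ge(2-\frac2k+\varepsilon)|H|$ alone does not stop $|H|$ from shrinking to a constant, since bounded-size cliques satisfy it. Only linear growth of the surplus, which is at most $(\frac2k-\varepsilon)|H|$, forces $n'\ge c_1 n$. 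What is missing is a real mechanism turning the $Q$-index hypothesis into minimum-degree or clique-density information.

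Your fallback repairs this only for $k=2$, where it can be completed as follows. Write
\[
q=\frac{\sum_{uv\in E}(x_u+x_v)(d(u)+d(v))}{\sum_{uv\in E}(x_u+x_v)},
\]
and note that the denominator equals $\frac q2\|\mathbf{x}\|_1$. From $x_v<\|\mathbf{x}\|_1/(q-n)$ you get $\|\mathbf{x}\|_1^2>q-n\ge 2\varepsilon n$. Cauchy--Schwarz, with $\sum_{uv\in E}(x_u+x_v)^2=q$, then yields at least $\frac{\varepsilon^3n^2}{2}$ edges with $d(u)+d(v)\ge(1+\varepsilon)n$. Each such edge lies in at least $\varepsilon n$ triangles, and your Step~3 finishes. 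For $k\ge3$, however, edges of large degree sum carry no clique information by themselves. Take $K_{a,a,n-2a}$ with $a=\lfloor n/(2k)\rfloor$: it is $K_4$-free, yet it has $a^2=\Omega(n^2)$ edges with $d(u)+d(v)=2n-2a\ge(2-\frac1k)n$. So ``iterating inside common neighbourhoods via the same averaging'' would need substantial new input from the $Q$-hypothesis at every level, and that is exactly the part your proposal leaves unspecified.
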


\begin{lemma}[\cite{Zhang-Wang2024-2}]\label{join}
Let $t \geq 1$, $k \geq t + 1$ and $G$ be a graph of order $n \geq 3k$. Suppose that $G = K_t \vee H$, where $H$ is a $(k-t)K_{3}$-free graph of order $n-t$. Then $G$ is a $kK_{3}$-free graph.
\end{lemma}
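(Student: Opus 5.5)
The plan is a direct counting argument by contradiction, splitting the triangles according to whether they meet the $K_t$ part. Write $V(G)=T\cup V(H)$, where $T$ is the vertex set of the $K_t$, so $|T|=t$ and every vertex of $T$ is adjacent to every other vertex of $G$. Suppose, for a contradiction, that $G$ contains $k$ pairwise vertex-disjoint triangles $\Delta_1,\ldots,\Delta_k$.

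First I would classify each triangle $\Delta_i$ as one of two types:
\begin{itemize}
\item type (a): $V(\Delta_i)\cap T\neq\emptyset$;
\item type (b): $V(\Delta_i)\subseteq V(H)$.
\end{itemize}
A type (b) triangle uses only edges with both ends in $V(H)$, so it is a triangle of $H$ itself.

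Next I bound the two types separately. The type (a) triangles are pairwise disjoint and each contains at least one vertex of $T$, so there are at most $|T|=t$ of them. The type (b) triangles form a family of disjoint triangles in $H$. Since $H$ is $(k-t)K_3$-free (which makes sense because $k\ge t+1$), there are at most $k-t-1$ of them.

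Adding the two bounds gives $k\le t+(k-t-1)=k-1$, a contradiction. Hence $G$ is $kK_3$-free.

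The hypothesis $n\ge 3k$ only guarantees that the question is non-vacuous; it is not used in the argument. I do not expect any real obstacle. The only point needing care is the observation that a triangle avoiding $T$ lies entirely in $H$, which holds because all edges of $G$ inside $V(H)$ are edges of $H$.
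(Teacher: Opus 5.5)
Your proof is correct and complete. Disjoint triangles that meet the $K_t$ number at most $t$, and those avoiding it are triangles of $H$, so they number at most $k-t-1$; hence there are no $k$ disjoint triangles. The paper gives no proof of its own here, since the lemma is quoted from Zhang and Wang. You are also right that the hypothesis $n\ge 3k$ plays no role in the argument.
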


\begin{lemma}[\cite{Cioaba Feng Tait Zhang 2020}]\label{property of cup of sets}
Let $V_{1},\ldots, V_{p}$ be finite sets. Then $$ |V_{1}\cap\cdots\cap V_{p}|\geqslant \sum_{i=1}^{p}|V_{i}|-(p-1)\mid\bigcup_{i=1}^{p}V_{i}\mid.$$
\end{lemma}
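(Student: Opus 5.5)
The inequality $|V_1\cap\cdots\cap V_p|\ge \sum_{i=1}^p|V_i|-(p-1)|\bigcup_{i=1}^pV_i|$ is a standard counting fact, and I would prove it by induction on $p$. Throughout, write $U=\bigcup_{i=1}^p V_i$.

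The base case $p=1$ is trivial, since it reads $|V_1|\ge |V_1|$.

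For $p=2$ the inequality is inclusion--exclusion. We have $|V_1\cap V_2| = |V_1|+|V_2|-|V_1\cup V_2|$, and $|V_1\cup V_2|\le |U|$. Hence $|V_1\cap V_2|\ge |V_1|+|V_2|-|U|$.

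For the inductive step, set $W=V_1\cap\cdots\cap V_{p-1}$. Applying the two-set estimate to $W$ and $V_p$, both of which lie inside $U$, gives
\[
|W\cap V_p|\ge |W|+|V_p|-|U|.
\]
The induction hypothesis, applied inside the union $U'=V_1\cup\cdots\cup V_{p-1}$, gives
\[
|W|\ge \sum_{i=1}^{p-1}|V_i|-(p-2)|U'|.
\]
Since $U'\subseteq U$, we may replace $|U'|$ by $|U|$ in this bound. Combining the two displays yields
\[
|V_1\cap\cdots\cap V_p|\ge \sum_{i=1}^{p}|V_i|-(p-1)|U|,
\]
which is the claim.

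Alternatively, there is a direct complement-counting argument. An element of $U$ lies outside the intersection exactly when it is missing from some $V_i$. Therefore
\[
|U|-|V_1\cap\cdots\cap V_p|\le \sum_{i=1}^p |U\setminus V_i| = p|U|-\sum_{i=1}^p|V_i|,
\]
and rearranging gives the inequality at once.

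There is no real obstacle here. The only point needing care is that all complements and unions are taken relative to the common ground set $U$, so that the monotonicity $|U'|\le|U|$ can be used in the induction.
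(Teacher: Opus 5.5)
Your proof is correct, and both arguments are complete. In the induction, replacing $|U'|$ by $|U|$ is legitimate because its coefficient $-(p-2)$ is nonpositive for $p\ge 2$. The complement-counting argument, $U\setminus\bigcap_{i}V_i=\bigcup_{i}(U\setminus V_i)$ followed by the union bound, is the shortest route.

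The paper gives no proof of this lemma (it is quoted from Cioab\u{a}, Feng, Tait and Zhang), so there is no in-paper argument to compare against; either of your arguments suffices.
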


\begin{lemma}\label{lower bound}
Let $n$ be sufficiently large and $G=K_{k-1}\vee T_2(n-k+1).$ Then
 $$q(G)>n-2+\sqrt{(k-1)n}.$$
\end{lemma}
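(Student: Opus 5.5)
The plan is to compute $q(G)$ through an equitable partition and then compare the characteristic polynomial of the quotient matrix with the target value. Write $m=n-k+1$ and split $T_2(m)$ into parts $A$ and $B$ of sizes $a=\lceil m/2\rceil$ and $b=\lfloor m/2\rfloor$. Let $K$ be the clique $K_{k-1}$. Then $\{K,A,B\}$ is an equitable partition of $Q(G)$.

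A vertex of $K$ has degree $n-1$, a vertex of $A$ has degree $k-1+b$, and a vertex of $B$ has degree $k-1+a$. The quotient matrix is therefore
\[
M=\begin{pmatrix} n-1+k-2 & a & b\\ k-1 & k-1+b & b\\ k-1 & a & k-1+a\end{pmatrix}.
\]
By Lemma~\ref{equitable-1}, the Perron vector of $M$, which is positive since $M$ is positive, lifts to a positive eigenvector of $Q(G)$. By Lemma~\ref{equitable-2}, $q(G)$ is then the largest eigenvalue of $M$.

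Rather than expanding the cubic in full, I would use a simple sufficient criterion. For an irreducible nonnegative matrix $M$, if some positive vector $\mathbf{y}$ satisfies $M\mathbf{y}>\theta\mathbf{y}$ entrywise, then $\lambda(M)>\theta$. This follows by pairing with the positive left Perron vector of $M$.

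Set $\theta=n-2+\sqrt{(k-1)n}$ and $s=\sqrt{(k-1)n}$. Guided by the extremal structure, take $\mathbf{y}=(x,1,1)$, where the clique entry $x$ is larger than the entries on $A\cup B$. For simplicity treat $a\approx b\approx m/2$; parity only shifts quantities by $O(1)$.

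The second row of $M\mathbf{y}$ gives $(k-1)x+k-1+a+b=(k-1)x+n$, approximately. We need this to exceed $\theta=n-2+s$, which means $(k-1)x>s-2$. It suffices to take $x$ slightly larger than $s/(k-1)=\sqrt{n/(k-1)}$, say $x=\sqrt{n/(k-1)}$ exactly. The third row is the same up to parity. The first row gives $(n+k-3)x+a+b$, which must exceed $\theta x=(n-2+s)x$. Substituting $x=\sqrt{n/(k-1)}$ reduces this to
\[
(k-1)x+m>s x,
\]
i.e.\ $\sqrt{(k-1)n}+n-k+1>n$. This holds for large $n$, since $\sqrt{(k-1)n}\gg k-1$.

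The main obstacle is the parity case $a\neq b$, where the $A$ and $B$ rows differ by $1$. In the row for $A$ we have $a+b=m$ exactly, so the row sum is $(k-1)x+k-1+m=(k-1)x+n$; the same holds for $B$. So both rows equal $(k-1)x+n$, and parity causes no trouble after all. The second-row inequality then reads $(k-1)x+n>n-2+s$, i.e.\ $s+n>n-2+s$, which is true with margin $2$. The first-row inequality has margin roughly $\sqrt{(k-1)n}-(k-1)>0$.

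Both strict inequalities hold, so $q(G)=\lambda(M)>\theta$. If one prefers to avoid the comparison lemma, the same check can be written as evaluating the characteristic polynomial of $M$ at $\theta$, showing it is negative, and noting that the other two eigenvalues are $O(k)$, hence smaller than $\theta$.
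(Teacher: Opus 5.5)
Your proof is correct, apart from one arithmetic slip noted below, and it takes a different route from the paper. The paper splits by parity. For $n-k$ odd it uses a $2\times 2$ quotient matrix and writes down its largest root explicitly. For $n-k$ even it expands the cubic characteristic polynomial $g$ of the $3\times 3$ quotient and checks that $g(n-2+\sqrt{(k-1)n})<0$.

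You instead use a single $3\times 3$ quotient for both parities, together with a subinvariance (Collatz--Wielandt) test with $\mathbf{y}=(\sqrt{n/(k-1)},1,1)$. This vector is essentially the shape of the Perron vector. Your route avoids expanding any polynomial and treats both parities uniformly. It also gives an explicit range of validity: all three row inequalities hold as soon as $n\ge k+1$. The paper's computation yields exact spectral data, which the lemma does not need.

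The slip is in your parity paragraph. The $A$-row of $M\mathbf{y}$ is $(k-1)x+(k-1+b)+b=(k-1)x+k-1+2b$, not $(k-1)x+k-1+a+b$. Likewise the $B$-row is $(k-1)x+k-1+2a$. So when $m$ is odd the two rows equal $(k-1)x+n-1$ and $(k-1)x+n+1$; parity does shift them. With $(k-1)x=\sqrt{(k-1)n}$, the $A$-row still exceeds $\theta=n-2+\sqrt{(k-1)n}$, but with margin $1$ rather than $2$. The conclusion is unaffected.

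There is also a false claim in your closing alternative. The other two eigenvalues of $M$ are not both $O(k)$: one of them is roughly $n-\sqrt{(k-1)n}$. No such bound is needed, though, since a monic cubic that is negative at $\theta$ already has a root larger than $\theta$.
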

\begin{proof}
We divide the proof into the following two cases.

{\bf Case~1.} $n-k$ is odd. Then $G=K_{k-1}\vee K_{\frac{n-(k-1)}{2}, \frac{n-(k-1)}{2}}$
and  $Q(G)$ has the equitable quotient matrix
$$B_{\Pi}=\left[\begin{array}{cc}
n+k-3 & n-k+1 \\
k-1 & n
\end{array}\right].$$
Therefore, the characteristic polynomial of $B_{\Pi}$ is
$$f(x)=x^2-(2n+k-3)x+(n+k-3)n-(k-1)(n-k+1).$$
Then the largest root of $f(x)=0$ is $\frac{2n+k-3+\sqrt{(2n+k-3)^2-4((n-1)^2+k^2-2k)}}{2}>n-2+\sqrt{(k-1)n}$ for sufficiently large $n.$
By the equitable partition, $q(G)=\lambda(B_{\Pi})$ is the largest root of $f(x)=0.$

{\bf Case~2.} $n-k$ is even. Then
$G=K_{k-1}\vee K_{\frac{n-(k-1)-1}{2}, \frac{n-(k-1)+1}{2}}$
and $Q(G)$ has the equitable quotient matrix
$$B_{\Pi'}=\left[\begin{array}{ccc}
n+k-3 & \frac{n-k}{2} &  \frac{n-k+2}{2} \\
k-1 & \frac{n+k}{2} & \frac{n-k+2}{2}  \\
k-1   & \frac{n-k}{2}     &  \frac{n+k-2}{2} \\
\end{array}\right].$$
By a simple calculation,  the characteristic polynomial of $B_{\Pi'}$ is
$$g(x)=x^3-2(n+k-2)x^2+(n^2+2(k-2)n+2(k^2-3k+2))x-(k-1)n^2+2(k-1)n-k^3+3k^2-2k.
$$
By the equitable partition, $q(G)=\lambda(B_{\Pi'})$ is the largest root of $g(x)=0.$

Besides,
we have
$$g(n-2+\sqrt{(k-1)n})=(k+1)(2k-n)\sqrt{(k-1)n}-k(k-1)(k+2)< 0$$
for sufficiently large $n.$
Thus, we have $q(G)>n-2+\sqrt{(k-1)n}.$
\end{proof}

\begin{lemma}\label{maximum component implies maximum degree}
Let $\mathbf{x}$ be the Perron vector of $G.$ If $w^{*}\in V(G)$ is a vertex with $x_{w^{*}}=1$ and  $u^{*}\in V(G)$ is a vertex  with $d_{G}(u^{*})=n-1,$
then $d_{G}(w^{*})=n-1$ and $x_{u^{*}}=1.$
\end{lemma}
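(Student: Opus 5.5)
The plan is to compare the eigen-equations of $Q(G)$ at the two vertices $w^{*}$ and $u^{*}$. Write $q=q(G)$. For every vertex $v$ the equation $Q(G)\mathbf{x}=q\mathbf{x}$ reads
$$(q-d_G(v))x_v=\sum_{y\in N_G(v)}x_y .$$
We may assume $G$ is connected; this holds automatically since $u^{*}$ is adjacent to every other vertex. Then $\mathbf{x}$ is positive by Perron--Frobenius. We also use the standard facts that $q\ge \Delta(G)+1$ when there is an edge, and in particular $q>d_G(v)$ for every $v$.

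\textbf{Step 1: $d_G(w^{*})=n-1$.} Apply the equation at $u^{*}$. Since $N_G(u^{*})=V(G)\setminus\{u^{*}\}$, we get
$$(q-n+1)x_{u^{*}}=\sum_{y\ne u^{*}}x_y .$$
Apply the equation at $w^{*}$, using $x_{w^{*}}=1$:
$$q-d_G(w^{*})=\sum_{y\in N_G(w^{*})}x_y .$$
Suppose $w^{*}\ne u^{*}$. Then $w^{*}\in N_G(u^{*})$, so $\sum_{y\ne u^{*}}x_y\ge 1+\sum_{y\in N_G(w^{*})\setminus\{u^{*}\}}x_y$. Combine the two identities. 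Writing $S=\sum_{y\in N_G(w^{*})}x_y=q-d_G(w^{*})$, and noting $u^{*}\in N_G(w^{*})$, this gives
$$(q-n+1)x_{u^{*}}\ge 1+S-x_{u^{*}},$$
that is, $(q-n+2)x_{u^{*}}\ge 1+q-d_G(w^{*})$. Using $x_{u^{*}}\le 1$, we obtain $q-n+2\ge 1+q-d_G(w^{*})$, hence $d_G(w^{*})\ge n-1$, so $d_G(w^{*})=n-1$. (If $w^{*}=u^{*}$ there is nothing to prove.)

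\textbf{Step 2: $x_{u^{*}}=1$.} Now both $u^{*}$ and $w^{*}$ have degree $n-1$, so each is adjacent to every other vertex. Their eigen-equations give
$$(q-n+1)x_{w^{*}}=T-x_{w^{*}},\qquad (q-n+1)x_{u^{*}}=T-x_{u^{*}},$$
where $T=\sum_{y}x_y$ is the sum over all vertices. Hence $(q-n+2)x_{w^{*}}=(q-n+2)x_{u^{*}}=T$. Since $q-n+2>0$, this yields $x_{u^{*}}=x_{w^{*}}=1$.

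\textbf{Main obstacle.} The only delicate point is the bookkeeping in Step 1. One must make sure the contribution of $u^{*}$ appears correctly on both sides, since $u^{*}$ lies in $N_G(w^{*})$ while $w^{*}$ lies in $N_G(u^{*})$. The bound $x_{u^{*}}\le 1$ must also be used in the right direction, which is valid because $q-n+2>0$. If the inequality turns out slightly off, a fallback is to subtract the two eigen-equations directly:
$$(q-n+1)x_{u^{*}}-(q-d_G(w^{*}))=\sum_{y\notin N_G(w^{*})\cup\{w^{*}\}}x_y+x_{w^{*}}-x_{u^{*}}.$$
The right-hand side is at least $1-x_{u^{*}}$, and comparing it with the left-hand side gives the same conclusion.
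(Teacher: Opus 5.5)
Your proof is correct and follows essentially the same route as the paper: both compare the eigen-equations at $u^{*}$ and $w^{*}$, separate out the mutual contributions coming from $u^{*}\in N_G(w^{*})$ and $w^{*}\in N_G(u^{*})$, and then squeeze using $x_{u^{*}}\le x_{w^{*}}=1$ together with $q\ge n$. The only cosmetic difference is that the paper keeps the exact identity $(q-n+2)x_{u^{*}}=(q-d_G(w^{*})+1)x_{w^{*}}+\sum_{v\notin N_G(w^{*}),\,v\ne w^{*}}x_v$ and reads off both conclusions at once, whereas you drop the nonneighbour sum and obtain $x_{u^{*}}=1$ in a separate second step.
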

\begin{proof}
If $w^{*}=u^{*},$ then the conclusion is immediate. Hence we assume $w^{*}\neq u^{*}$.
Since $\Delta(G)=n-1,$ $G$ contains a copy of $K_{1,n-1}.$ By Lemma~\ref{subgraph}, we have $q(G)\geq n.$
By the eigenvalue equation for $Q(G),$ we have
$$q(G)x_{w^{*}}=d_{G}(w^{*})x_{w^{*}} +\sum_{v\in N_{G} (w^{*})}x_{v}=d_{G}(w^{*})x_{w^{*}} +\sum_{\substack{v\in N_{G}( w^{*})\\v\neq u^{*}}}x_{v}+x_{u^{*}},$$
which implies that
\begin{eqnarray}\label{first equality}
\sum_{\substack{v\in N_{G}( w^{*})\\v\neq u^{*}}}x_{v}=(q(G)-d_{G}(w^{*}))x_{w^{*}}-x_{u^{*}}.
\end{eqnarray}
By (\ref{first equality}) and $d_{G}(u^{*})=n-1,$  we have
\begin{eqnarray*}
q(G)x_{u^{*}}&=&(n-1)x_{u^{*}} +\sum_{v\in V(G)\setminus \{u^{*}\}}x_{v} 
=(n-1)x_{u^{*}}+\sum_{\substack{v\in N_{G}(w^{*})\\ v\neq u^{*}}}x_{v}+x_{w^{*}}+\sum_{\substack{v\notin N_{G}(w^{*})\\ v\neq w^{*}}}x_{v}\nonumber \\
&=& (n-1)x_{u^{*}}+ (q(G)-d_{G}(w^{*}))x_{w^{*}}-x_{u^{*}}+x_{w^{*}}+\sum_{\substack{v\notin N_{G}(w^{*})\\ v\neq w^{*}}}x_{v}\nonumber \\
&=& (n-2)x_{u^{*}}+ (q(G)-d_{G}(w^{*})+1)x_{w^{*}}+\sum_{\substack{v\notin N_{G}(w^{*})\\ v\neq w^{*}}}x_{v},
\end{eqnarray*}
which implies that
$$(q(G)-n+2)x_{u^*}=(q(G)-d_{G}(w^*)+1)x_{w^*}+\sum_{\substack{v\notin N_{G}(w^{*})\\ v\neq w^{*}}}x_{v}.$$
Thus, by $q(G)\geq n$ and $x_{w^*}\geq x_{u^*},$ we have
$$x_{w^*}\geq x_{u^*}\geq \frac{q(G)-n+2}{q(G)-d_{G}(w^*)+1}x_{u^*}=x_{w^*}+\frac{\sum_{\substack{v\notin N_{G}(w^{*})\\ v\neq w^{*}}}x_{v}}{q(G)-d_{G}(w^*)+1}\geq x_{w^*},$$
which implies that $d_{G}(w^*)= n-1$ and  $x_{u^*}=1.$
We complete the proof.
\end{proof}

\begin{lemma}\label{larger component derive larger degree}
Let $G$ be a graph of order $n$ with maximum degree $n-1$ and
 $\mathbf{x}$ be the Perron vector of $G.$
 If $x_{u}>\frac{3}{4}$ for $u\in V(G),$ then $d_{G}(u)>\frac{4(n-2)}{3}-\frac{q(G)}{3}.$
\end{lemma}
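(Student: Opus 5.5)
This follows the argument of Lemma~\ref{maximum component implies maximum degree}: compare the eigenvalue equation at $u$ with the one at a vertex $u^{*}$ of degree $n-1$. Let $u^{*}$ satisfy $d_G(u^{*})=n-1$. By Lemma~\ref{maximum component implies maximum degree} we have $x_{u^{*}}=1$. Since $G$ contains $K_{1,n-1}$, Lemma~\ref{subgraph} gives $q(G)\ge n$. If $u=u^{*}$, then $d_G(u)=n-1$, and the claimed inequality reduces to $q(G)>n-3$, which holds. So assume $u\neq u^{*}$.

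The eigenvalue equation at $u^{*}$ reads
$$q(G)x_{u^{*}}=(n-1)x_{u^{*}}+\sum_{v\neq u^{*}}x_v .$$
Split the sum into $x_u$, the sum over $N_G(u)\setminus\{u^{*}\}$, and the sum over the non-neighbours of $u$ other than $u$ itself. At $u$ the eigenvalue equation gives
$$\sum_{v\in N_G(u),\,v\neq u^{*}}x_v=(q(G)-d_G(u))x_u-x_{u^{*}} .$$
There are $n-1-d_G(u)$ non-neighbours of $u$ other than $u$, and each entry is at most $1$, so their contribution is at most $n-1-d_G(u)$. Substituting this and $x_{u^{*}}=1$ yields
$$q(G)\le (n-2)+(q(G)-d_G(u)+1)x_u+(n-1-d_G(u)).$$

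Now use $x_u>\frac34$. The coefficient $q(G)-d_G(u)+1$ is positive because $q(G)\ge n>d_G(u)$. Hence we also have $(q(G)-d_G(u)+1)x_u\le q(G)-d_G(u)+1$, but this crude bound is not enough. Instead, rewrite the inequality as
$$q(G)-(2n-3)+2d_G(u)\le (q(G)-d_G(u)+1)x_u .$$
Suppose, for contradiction, that $d_G(u)\le \frac{4(n-2)}{3}-\frac{q(G)}{3}$. Then $3d_G(u)\le 4n-8-q(G)$. Compute $4\cdot$LHS $-\,3(q-d+1)=q-8n+12+11d$. This is not obviously of fixed sign, so the constants need to be rechecked here.

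The more natural route is to use $x_u\le 1$ on the right-hand side (giving $3d_G(u)\le 2n-2$, which is weak) and $x_u>\frac34$ on the left. Concretely, work from the eigenvalue equation at $u$ directly:
$$\tfrac34(q(G)-d_G(u))<(q(G)-d_G(u))x_u=\sum_{v\in N_G(u)}x_v\le d_G(u).$$
This gives $d_G(u)>\frac{3}{7}q(G)$, which is different from the target. The target is obtained by combining both equations: $(q-n+2)\cdot 1=(q-d+1)x_u+\sum_{\text{non-nbrs}}x_v$. With $x_u\le1$ and non-neighbour entries bounded by $1$, this gives $q-n+2\le q-d+1+n-1-d$, that is, $2d\le 2n-2$, which is trivial.

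So the real content must use $x_u\le 1$ paired with the lower bound. From the identity,
$$(q-n+2)=(q-d+1)x_u+S, \qquad S\ge 0,$$
we get $x_u\le \frac{q-n+2}{q-d+1}$. Combining with $x_u>\frac34$ gives $4(q-n+2)>3(q-d+1)$, that is, $3d>4n-8-q+3$. This is stronger than the claim, so the inequality follows.

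The main point requiring care is getting the signs and constants right in this last step. In particular, the argument relies on $q-d+1>0$, which holds since $q(G)\ge n$.
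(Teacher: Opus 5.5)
Your final paragraph is correct and is essentially the paper's argument: combining the eigenvalue equations at a vertex of degree $n-1$ (whose entry is $1$) and at $u$ gives $q(G)-n+2=(q(G)-d_G(u)+1)x_u+\sum_{v\notin N_G(u)\cup\{u\}}x_v$, and dropping the nonnegative sum and using $x_u>\frac34$ yields the bound (the paper drops all of $\sum_{v\notin N_G(u)}x_v$, including the $x_u$ term you keep, so your bound is stronger by $1$). Prune the write-up to that final step, since the abandoned detours contain slips that do not affect the final argument: the case $u=u^{*}$ actually reduces to $q(G)>n-5$, and the displayed inequality should read $q(G)-(2n-3)+d_G(u)\le (q(G)-d_G(u)+1)x_u$, not $+2d_G(u)$.
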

\begin{proof}
Since $\Delta(G)=n-1,$ $G$ contains a copy of $K_{1,n-1}.$ By Lemma~\ref{subgraph}, we have $q(G)\geq n.$
Let
$w^{*}\in V(G)$ be a vertex with $x_{w^{*}}=1.$
By Lemma~\ref{maximum component implies maximum degree}, $d_{G}(w^{*})=n-1.$ Thus, for any $u\in V(G)$ with $x_{u}>\frac{3}{4},$ we have
\begin{eqnarray*}
q(G)x_{w^{*}}&=&(n-1)x_{w^{*}}+\sum_{v\in V(G)\setminus \{w^{*}\}}x_{v} 
= (n-1)x_{w^{*}}+\sum_{v\in N_{G}( u)}x_{v} +\sum_{v\notin N_{G}( u) }x_{v}- x_{w^{*}} \nonumber \\
&=& (n-2)x_{w^{*}}+(q(G)-d_{G}(u))x_{u} +\sum_{v\notin N_{G}( u)}x_{v} ,
\end{eqnarray*}
which implies that
\begin{eqnarray*}
d_{G}(u)&=&q(G)-\frac{q(G)-(n-2)}{x_{u}}+\frac{\sum_{v\notin N_{G}( u)}x_{v}}{x_{u}}
>q(G)-\frac{4(q(G)-(n-2))}{3}+\frac{\sum_{v\notin N_{G}( u)}x_{v}}{x_{u}}\\
&\geq&\frac{4(n-2)}{3}-\frac{q(G)}{3},
\end{eqnarray*}
where the first inequality holds as $q(G)\geq n$ and $x_{u}>\frac{3}{4}.$
We complete the proof. 
\end{proof}

For any two non-adjacent vertices $u$ and $v$ in $G$, if we delete  all the incident edges of $u$ and add all new edges connecting $u$ to $N_G(v)$, then such an operation is called {\it symmetrization} and we say ``{\it symmetrize $u$ to $v$}" for simplicity.

\begin{lemma}\label{symmetrize and adding edges}
Let $G$ be a  connected graph whose vertex set is partitioned into four 
subsets $A,B,C$ and $D$, and let $\mathbf{x}$ be a positive eigenvector corresponding to $q(G)$.
Let $x_u = \max\{x_v : v \in A\cup C\},$   $B\subseteq N_G(u)$, $C_0 = C \cap N_G(u)$ and $C_1 = C \setminus C_0$.
If  $A$ 
is an independent set and $u \in A$,
then
\[
q(G) \leq q\left(K_{|D|} \vee \overline{K}_{|A|+|C_1|}\vee  G[B\cup C_0]\right),
\]
and equality holds if and only if $G=K_{|D|} \vee \overline{K}_{|A|+|C_1|}\vee  G[B\cup C_0]$.
\end{lemma}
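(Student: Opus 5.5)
The plan is to pass from $G$ to $H:=K_{|D|}\vee \overline{K}_{|A|+|C_1|}\vee G[B\cup C_0]$ in two stages. First, symmetrize every vertex of $S:=A\cup C_1$ to $u$. Second, add edges. Lemma~\ref{subgraph} handles the second stage, so the real work is a Rayleigh-quotient comparison for the first.

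\textbf{Stage 1: simultaneous symmetrization.} Since $A$ is independent, $u\in A$, and $C_1\cap N_G(u)=\emptyset$, no vertex of $S$ is adjacent to $u$. Hence $N:=N_G(u)\subseteq B\cup C_0\cup D$, and $N\supseteq B$. Let $G'$ be the graph obtained from $G$ by deleting all edges incident to $S\setminus\{u\}$ and joining every vertex of $S$ to all of $N$. I would do all these symmetrizations at once, because symmetrizing one vertex at a time changes the Perron vector, and then $u$ need no longer be the maximum over the remaining part of $A\cup C$.

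In $G'$, the set $S$ is independent and its vertices are twins of $u$. As a test vector take $\mathbf{y}$ with $y_v=x_v$ for $v\notin S$ and $y_a=x_u$ for $a\in S$. Write $E_0$ for the set of edges of $G$ with no endpoint in $S$. Then
$$\mathbf{y}^{T}Q(G')\mathbf{y}=\sum_{vw\in E_0}(x_v+x_w)^2+|S|\,\Phi,\qquad \mathbf{y}^T\mathbf{y}=\sum_{v\notin S}x_v^2+|S|x_u^2,\qquad \Phi:=\sum_{w\in N}(x_u+x_w)^2 .$$
I would compare this with $q(G)\,\mathbf{x}^T\mathbf{x}=\mathbf{x}^TQ(G)\mathbf{x}$, using the eigen-equations $\sum_{w\in N_G(a)}x_w=(q-d_a)x_a$ for $a\in S$. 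Each edge of $G$ meeting $S$ contributes at most as much as the corresponding charge against $u$, because $x_a\le x_u$ for $a\in S\subseteq A\cup C$.

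Concretely, I would show that $\Phi\ge q(G)x_u^2$. This uses only the eigen-equation at $u$: $\Phi=d_ux_u^2+2x_u\sum_{N}x_w+\sum_N x_w^2\ge q x_u^2+x_u\sum_Nx_w$. I would then show that the total weight of the edges of $G$ meeting $S$ is at most $q(G)\sum_{a\in S}x_a^2$ plus the amount absorbed by $\sum_{v\notin S}$. Balancing these gives $\mathbf{y}^TQ(G')\mathbf{y}\ge q(G)\,\mathbf{y}^T\mathbf{y}$, and hence $q(G')\ge q(G)$.

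\textbf{Main obstacle.} I expect this balancing step to be the hardest. The edges inside $S$ (between $A$ and $C_1$, or within $C_1$) disappear, and the edges from $S$ to $V\setminus(S\cup N)$ are removed. All of this has to be paid for using the maximality of $x_u$ over $A\cup C$. If the direct charging fails, my fallback is to symmetrize one vertex $a\in S$ at a time. At each step I would re-verify that the new Perron vector still attains its maximum over the remaining part of $A\cup C$ at a twin class containing $u$. This holds because twins have equal Perron entries, and the class of $u$ only grows.

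\textbf{Stage 2: adding edges.} The graph $G'$ is a spanning subgraph of $H$. Indeed, $S$ is independent in both; every edge of $G'$ from $S$ goes to $N\subseteq B\cup C_0\cup D$, and $H$ contains all such edges; the edges of $G'$ inside $B\cup C_0$ are exactly those of $G[B\cup C_0]$; and all edges at $D$ are present in $H$. Since $H$ is connected, Lemma~\ref{subgraph} gives $q(G')\le q(H)$, with equality only if $G'=H$. Combining the two stages yields $q(G)\le q(H)$.

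For the equality case, equality forces $G'=H$ and equality in Stage 1. Equality in Stage 1 means $\mathbf{y}$ is a Perron vector of $G'$, which forces $x_a=x_u$ and $N_G(a)=N$ for every $a\in S$, so that $G=G'=H$.
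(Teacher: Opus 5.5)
There is a genuine gap, and it is more serious than the unfinished balancing step. Your Stage~1 claim, that pure symmetrization satisfies $q(G')\ge q(G)$, is false for the signless Laplacian. So neither the simultaneous charging nor the one-vertex-at-a-time fallback can be completed.

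\textbf{Counterexample to Stage~1.} Take adjacent vertices $h_1,h_2$, a set $W$ of $s$ vertices each adjacent exactly to $h_1$ and $h_2$, a vertex $u$ adjacent to $h_1$, and $m$ pendant vertices attached to $u$. Fix $a\in W$ and put $A=\{u,a\}$, $B=\{h_1\}$, $C=\emptyset$, and $D$ the rest. For $s=100$, $m=60$, the equitable quotient gives $x_u\approx 0.0233\,x_{h_1}>x_a\approx 0.0173\,x_{h_1}$, so every hypothesis of the lemma holds. Your $G'$ is obtained by deleting $ah_2$ and adding the $60$ edges from $a$ to the pendants. Nevertheless $q(G)\approx 104.53>q(G')\approx 104.21$.

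The gap persists asymptotically. With $m=3s/5$, write $q(G)=s+\delta$ and $q(G')=s+\delta'$. Then $(\delta-2)(\delta-3)\to4$ and $(\delta'-1)(\delta'-3)\to4$, so $\delta\to(5+\sqrt{17})/2$ and $\delta'\to 2+\sqrt5$.

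The loss sits at $h_2\in D\setminus N_G(u)$, which loses its neighbour $a$. In general, the edges from $S\setminus\{u\}$ to $D\setminus N_G(u)$ that you delete in Stage~1 cannot be paid for using only $x_a\le x_u$. In this example their removal genuinely lowers $q$.

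\textbf{How the paper avoids this.} The paper never separates the two stages. It passes from $G$ directly to $H$: it symmetrizes $S\setminus\{u\}$ to $u$ and, at the same time, joins every vertex of $D$ to all other vertices. It keeps exactly your test vector $\mathbf{y}$, but proves the entrywise inequality $Q(H)\mathbf{y}\ge q(G)\mathbf{y}$ instead of a Rayleigh-quotient bound. The three cases are as follows.
\begin{itemize}
\item For $v\in S$: $(Q(H)\mathbf{y})_v=d_H(u)x_u+\sum_{w\in N_H(u)}x_w\ge q(G)x_u$, because $N_G(u)\subseteq N_H(u)=B\cup C_0\cup D$ and $N_H(u)\cap S=\emptyset$.
\item For $v\in B\cup C_0$: $N_H(v)=N_G(v)\cup S\cup D$ and $\mathbf{y}\ge\mathbf{x}$.
\item For $v\in D$: $v$ is universal in $H$ and $\mathbf{y}\ge\mathbf{x}$.
\end{itemize}
The completion of $D$ is precisely what absorbs the deficit at $D\setminus N_G(u)$ that breaks your Stage~1.

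Since $\mathbf{y}>0$, this gives $q(H)\ge q(G)$. If equality holds, then $Q(H)\mathbf{y}=q(G)\mathbf{y}$, so every inequality above is tight. Tightness forces $D$ to be universal in $G$ and $B\cup C_0$ to be complete to $S\cup D$ in $G$, hence $H\subseteq G$. Since $G$ is connected, Lemma~\ref{subgraph} then gives $G=H$.
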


\begin{proof}
Construct a new graph $G'$ from $G$ as follows.
\begin{itemize}
    \item For every vertex $v \in (A \cup C_1)\setminus \{u\}$, symmetrize $v$ to  $u$.
    \item For every vertex $v \in D$, add all possible edges between $v$ and the remaining vertices of $V(G)$.
\end{itemize}
Since $A$ is an independent set and $B\subseteq N_G(u)$,
$
G' = K_{|D|} \vee \overline{K}_{|A|+|C_1|}\vee  G[B\cup C_0]
$.

Define a new vector $\mathbf{y}$ on $V(G)$ by
\[
y_v =
\begin{cases}
x_u, & v \in A \cup C_1,\\[4pt]
x_v, & v \notin A \cup C_1.
\end{cases}
\]

 In the sequel, we verify the inequality $q(G) y_v \le (Q(G')\mathbf{y})_v$ for every vertex $v\in V(G)$.

{\bf Case 1.} $v \in A \cup C_1$.
By the definitions of $\mathbf{x}$ and $\mathbf{y}$,
\[
q(G) y_v = q(G) x_u = (Q(G)\mathbf{x})_u = d_G(u) x_u + \sum_{w \in N_G(u)} x_w.
\]
By the construction of $G'$, $d_{G'}(u)=d_{G'}(v)$ and $N_{G'}(u)=N_{G}(u)\cup D=B\cup C_0\cup D$.
Then  $d_G(u)\leq d_{G'}(u)$ and  \[
q(G)y_v=q(G) x_u \le d_{G'}(u) x_u + \sum_{w \in N_{G'}(u)} x_w
= d_{G'}(v) y_v + \sum_{w \in N_{G'}(v)} y_w
= (Q(G')\mathbf{y})_v,
\]
and equality holds if and only if $N_G(u)=N_{G'}(u) =B\cup C_0 \cup D$.

{\bf Case 2.} $v \in D$.
By the definitions of $\mathbf{x}$ and $\mathbf{y}$,
\[
q(G) y_v = q(G) x_v = (Q(G)\mathbf{x})_v = d_G(v) x_v + \sum_{w \in N_G(v)} x_w.
\]
Since  $x_v=y_v$ and $v$ is adjacent to all other vertices in $G'$,
\[
q(G)y_v=q(G) x_v
\le (n-1) y_v + \sum_{w \in N_{G'}(v)} y_w
= (Q(G')\mathbf{y})_v,
\]
and  equality holds  if and only if $d_G(v)=n-1$
 and $x_u=x_w$  for any $w\in A\cup C_1$.

{\bf Case 3.} $v \in B \cup C_0$.
By the definitions of $\mathbf{x}$ and $\mathbf{y}$,
\[
q(G) y_v = q(G) x_v = (Q(G)\mathbf{x})_v = d_G(v) x_v + \sum_{w \in N_G(v)} x_w.
\]
By the construction of $G'$, $ N_{G'}(v)=N_{G}(v)\cup A\cup C_1\cup D$. 
Together with $x_v=y_v$ and $x_u \ge x_w$ for any $w \in A \cup C_1$,
\begin{eqnarray*}
q(G) y_v=q(G) x_v
\le  d_{G'}(v) y_v + \sum_{w \in N_{G'}(v)} y_w
= (Q(G')\mathbf{y})_v,
\end{eqnarray*}
and the  equality holds if and only if $d_G(v)=d_{G'} (v)$, i.e., $ N_G(v)=N_{G'}(v)=N_{G}(v)\cup A\cup C_1\cup D$, and $x_u=x_w$  for any $w\in A\cup C_1$.  Clearly, the  equality holding implies that $v$ is adjacent to all vertices in $A\cup C_1\cup D$.

Hence,  $q(G)\mathbf{y} \le Q(G')\mathbf{y}$ for the positive vector $\mathbf{y}$.
By  the Perron-Frobenius theorem, $
q(G') \ge q(G)$ and if the equality holds
 then $q(G)\mathbf{y}=Q(G')\mathbf{y}$, which implies that all equalities hold in Cases 1-3. Hence $G=K_{|D|} \vee \overline{K}_{|A|+|C_1|}\vee  G[B\cup C_0]$.  Conversely,  if $G=K_{|D|} \vee \overline{K}_{|A|+|C_1|}\vee  G[B\cup C_0]$, then obviously $q(G)=q(K_{|D|} \vee \overline{K}_{|A|+|C_1|}\vee  G[B\cup C_0])$.
\end{proof}

Denote by $K_{n_{1},\ldots,n_{s}}$  the {\it complete $s$-partite} graph of partition sizes $n_{1},\ldots,n_{s},$  where $n_{1}\geq \ldots\geq n_{s}\geq 0.$
\begin{lemma}\label{JinLe-2}
Let $G_1=K_{n_1,\ldots,n_s}\vee H$ and $G_2=K_{n_1-1,\ldots,n_s+1}\vee H$, where $H$ is a graph of order $t\geq 1$, $n_1\ge  \cdots\ge n_s\geq 0$, $n_{s-1}\geq 1,$   $n_1-n_s\ge 2$ and $s\ge 2$. Then $q(G_2)> q(G_1)$.
\end{lemma}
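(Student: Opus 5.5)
Write $G_1=K_{n_1,\ldots,n_s}\vee H$ with parts $V_1,\ldots,V_s$ of sizes $n_1\ge\cdots\ge n_s$, vertex set $W$ of $H$ with $|W|=t$, and $N=n_1+\cdots+n_s+t$. The plan is to turn $G_1$ into $G_2$ by moving one vertex $w\in V_1$ into $V_s$, and to compare the Rayleigh quotient of $Q$ on $G_2$ with that of $G_1$, using the Perron vector $\mathbf{x}$ of $G_1$. Since $G_1$ is connected ($t\ge1$, $s\ge 2$, $n_{s-1}\ge1$), $\mathbf{x}$ is positive. By symmetry it is constant on each part $V_i$; call the value $a_i$.

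\textbf{Step 1: compare the $a_i$.} For $v\in V_i$, $d(v)=N-n_i$ and
\[
q\,a_i=(N-n_i)a_i+\sum_{j\ne i}n_ja_j+\sum_{z\in W}x_z .
\]
Writing $S=\sum_j n_ja_j+\sum_{z\in W}x_z$, this becomes $(q-N+2n_i)a_i=S$, so $a_i=S/(q-N+2n_i)$. We have $q\ge \Delta+1> N-n_i$, so every denominator is positive and $a_i$ is decreasing in $n_i$. In particular $a_1\le a_s$, with strict inequality because $n_1>n_s$.

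\textbf{Step 2: track the edge change.} Moving $w$ from $V_1$ to $V_s$ deletes the $n_s$ edges from $w$ to $V_s$ and adds the $n_1-1$ edges from $w$ to $V_1\setminus\{w\}$. Using $\mathbf{x}^{T}Q\mathbf{x}=\sum_{uv\in E}(x_u+x_v)^2$, keeping $\mathbf{x}$ fixed gives
\[
\mathbf{x}^{T}Q(G_2)\mathbf{x}-\mathbf{x}^{T}Q(G_1)\mathbf{x}=(n_1-1)(2a_1)^2-n_s(a_1+a_s)^2 .
\]
This can be negative, since $a_s>a_1$. So I would not keep $\mathbf{x}$ fixed; instead I would reassign the entry of $w$ to be $a_s$.

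\textbf{Step 3: the modified test vector.} Let $\mathbf{y}$ agree with $\mathbf{x}$ except $y_w=a_s$. In $G_2$, vertex $w$ is adjacent to $V_1\setminus\{w\}$, to all $V_j$ with $j\ne 1,s$, and to $W$. Then
\[
\mathbf{y}^{T}Q(G_2)\mathbf{y}-\mathbf{x}^{T}Q(G_1)\mathbf{x}
=\sum_{v\in V_1\setminus\{w\}}(a_s+a_1)^2-\sum_{v\in V_s}(a_1+a_s)^2
+\sum_{v\notin V_1\cup V_s}\big[(a_s+x_v)^2-(a_1+x_v)^2\big].
\]
The first two sums together give $(n_1-1-n_s)(a_1+a_s)^2>0$, because $n_1-n_s\ge2$. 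Each term in the last sum is positive because $a_s>a_1$, and the sum is nonempty since $W\neq\emptyset$.

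The obstacle is the normalisation: $\|\mathbf{y}\|^2=\|\mathbf{x}\|^2+a_s^2-a_1^2$ is larger than $\|\mathbf{x}\|^2$, so a larger numerator alone does not give a larger Rayleigh quotient. We need
\[
\mathbf{y}^{T}Q(G_2)\mathbf{y}-\mathbf{x}^{T}Q(G_1)\mathbf{x}>q(G_1)\,(a_s^2-a_1^2).
\]
I would write the last sum as $(a_s-a_1)\sum_{v}(a_s+a_1+2x_v)$. Then I would use the eigen-equations for $a_1$ and $a_s$ to express the total $\sum_{v\notin V_1\cup V_s}x_v$ through $q$, namely $q a_1=(N-n_1)a_1+n_sa_s+T$, where $T$ is that total. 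This converts the required inequality into an identity in $a_1,a_s,n_1,n_s,q$. After substitution I expect the difference to reduce to a positive multiple of $(n_1-n_s-1)(a_1+a_s)^2$ plus nonnegative terms. If that algebra does not close, the fallback is to compare the characteristic polynomials of the equitable quotient matrices of $G_1$ and $G_2$ (Lemma~\ref{equitable-1}, Lemma~\ref{equitable-2}), treating $H$ as a single block with row sums $d_H+N-t$.
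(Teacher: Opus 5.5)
Your Step~3 fails, and the problem is not that the algebra is unfinished. Write $q=q(G_1)$ and $T=\sum_{v\notin V_1\cup V_s}x_v$. Adding and subtracting the eigen-equations for $a_1$ and $a_s$ gives $2T=(q-N)(a_1+a_s)$ and $(q-N+2n_s)a_s=(q-N+2n_1)a_1$. Substitute these into your required inequality $\mathbf{y}^{T}Q(G_2)\mathbf{y}-\mathbf{x}^{T}Q(G_1)\mathbf{x}>q(a_s^2-a_1^2)$ and divide by $a_1+a_s$. It becomes $(n_1-n_s-1)(a_1+a_s)>(n_1+n_s)(a_s-a_1)$, which is exactly
\[
(n_1-n_s-1)\bigl(q(G_1)-N\bigr)>n_1+n_s .
\]
This is false in general. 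If $s=2$, $H=K_1$ and $n_1-n_2=2$, it demands $q(G_1)>2N-1$, whereas $q(G_1)\le 2\Delta(G_1)=2N-2$. Concretely, for $G_1=K_{3,1}\vee K_1$ one has $q(G_1)=(7+\sqrt{33})/2\approx 6.37$, but your $\mathbf{y}$ has Rayleigh quotient about $6.07$ on $G_2=K_{2,2}\vee K_1$. In the paper's own use of the lemma ($H=K_{k-1}$, $q-N=\Theta(\sqrt{n})$, $n_1+n_s\approx n$), the test vector breaks down at exactly the near-balanced steps that are needed. Your fallback does not rescue this. Treating $H$ as a single block is an equitable partition for $Q$ only if $H$ is regular, and no comparison of characteristic polynomials is actually carried out. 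You also do not treat the permitted case $n_s=0$, where $a_s$ is undefined; that case is trivial, since then $G_1$ is a proper subgraph of $G_2$.

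The missing idea is to compare two different eigenvectors rather than push one test vector through a Rayleigh quotient. The paper uses the equitable partitions whose blocks are the partite sets together with each single vertex of $H$. It takes a positive left Perron vector $\mathbf{x}$ of $Q(G_1)/\pi_1$, normalized by $\sum_i x_i=1$, and a positive right Perron vector $\mathbf{y}$ of $Q(G_2)/\pi_2$. The two quotient matrices differ only in columns $1$ and $s$, so $(q(G_1)-q(G_2))\mathbf{x}^{T}\mathbf{y}=(1-2x_1)y_1+(2x_s-1)y_s$. The left eigen-equations give $x_i=n_i/(q(G_1)-N+2n_i)$. Hence $x_1<\tfrac12$ and $x_1>x_s$, because $q(G_1)>N$ (for $n_s\ge1$, $G_1$ properly contains the spanning $K_{n_1,N-n_1}$). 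The right eigen-equations give $y_1\le y_s$ because $n_1-1\ge n_s+1$. So the expression is at most $2(x_s-x_1)y_s<0$. This left vector is weighted by part sizes ($x_i\propto n_ia_i$ in your notation), which is why it is ordered opposite to your $a_i$. That weighting is exactly what removes the normalization obstacle you identified.
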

\begin{proof}
If $n_{s}= 0,$ then $G_{1}$ is a proper subgraph of $G_{2},$ which implies that $q(G_2)> q(G_1).$   In the sequel, we assume that $n_{s}\neq 0.$
Let $n=t+\sum_{i=1}^{s}n_{i}$ and $\pi_1$(resp.~$\pi_2$) be the partition of $V(G_1)$(resp.~$V(G_2)$)  consisting of partite sets of 
$K_{n_1,\ldots,n_s}$
(resp.~$K_{n_1-1,\ldots,n_s+1}$) and each vertex of $H$. Then $\pi_1$ and $\pi_2$ are equitable partitions corresponding to matrices $Q(G_1)$ and $Q(G_2),$ respectively, and
$$Q(G_1)/{\pi_1}=\left(\begin{matrix}
    Q_{11}&J_{s\times t}\\
    Q_{21}&Q(H)+\left(\sum_{i=1}^{s}n_{i}\right)I
\end{matrix}\right),~~Q(G_2)/{\pi_2}=\left(\begin{matrix}
    Q_{11}'&J_{s\times t}\\
    Q_{21}'&Q(H)+\left(\sum_{i=1}^{s}n_{i}\right)I
\end{matrix}\right),$$
where $I$ is the identity matrix of order $t,$
$$Q_{11}=\left(\begin{matrix}
    n-n_1&n_2&\cdots& n_s\\
    n_1&n-n_2&\cdots& n_s\\
    \vdots&\vdots& &\vdots\\
    n_1&n_2&\cdots&n-n_s
\end{matrix}\right),~~Q_{11}'
=\left(\begin{matrix}
    n-(n_1-1)&n_2&\cdots& n_s+1\\
    n_1-1&n-n_2&\cdots& n_s+1\\
    \vdots&\vdots& &\vdots\\
    n_1-1&n_2&\cdots&n-(n_s+1)
\end{matrix}\right),$$
and
$$Q_{21}=\left(\begin{matrix}
    n_1&n_2&\cdots& n_s\\
    n_1&n_2&\cdots& n_s\\
    \vdots&\vdots& &\vdots\\
    n_1&n_2&\cdots&n_s
\end{matrix}\right),~~Q_{21}'
=\left(\begin{matrix}
    n_1-1&n_2&\cdots& n_s+1\\
    n_1-1&n_2&\cdots& n_s+1\\
    \vdots&\vdots& &\vdots\\
    n_1-1&n_2&\cdots&n_s+1
\end{matrix}\right).$$

It follows from the irreducibility of $Q(G_1)$ and $Q(G_2),$
Lemmas~\ref{equitable-1} and \ref{equitable-2} that
$q(G_i)=\lambda(Q(G_i)/\pi_{i})$ for $i=1,2.$
By the Perron-Frobenius theorem, there exist  two positive vectors $\mathbf{x},\mathbf{y}$ satisfying  $\mathbf{x}^{\mathrm{T}}Q(G_1)/\pi_1=q(G_1)\mathbf{x}^{\mathrm{T}},$ $Q(G_2)/\pi_2 \mathbf{y}=q(G_2)\mathbf{y}$ and $\sum_{i=1}^{s+t}x_i=1$. 
Hence,
\begin{eqnarray}\label{Jin-1}
    (q(G_1)-q(G_2))\mathbf{x}^{\mathrm{T}}\mathbf{y}&=&\mathbf{x}^{\mathrm{T}}[Q(G_1)/\pi_1-Q(G_2)/{\pi_2}]\mathbf{y}\nonumber\\
    &=&\left(\sum_{i=2}^{s+t}x_i- x_1\right)y_1+\left(x_s-\left(\sum_{i=1}^{s-1}x_i+\sum_{i=s+1}^{s+t}x_i\right)\right)y_s \nonumber\\
     &=& (1- 2x_1)y_1+(2x_s-1)y_s.
\end{eqnarray}
By characteristic  equations $\mathbf{x}^{\mathrm{T}}Q(G_1)/\pi_1=q(G_1)\mathbf{x}^{\mathrm{T}}$ and $Q(G_2)/\pi_2 \mathbf{y}=q(G_2)\mathbf{y}$,
$$\begin{matrix}
q(G_1)x_1 =(n-2n_1)x_1+n_1, &q(G_1)x_s  =(n-2n_s)x_s+n_s,
\end{matrix}$$
$$q(G_2)y_{1} =(n-(n_1-1))y_1+\sum\limits_{i=2}^{s-1} n_{i}y_{i}  +(n_{s}+1)y_{s}  +\sum\limits_{i=s+1}^{s+t}y_{i},$$
$$q(G_2)y_{s}  =(n_{1}-1)y_{1}+\sum\limits_{i=2}^{s-1}n_{i}y_{i}  +(n-(n_{s}+1))y_{s}+\sum\limits_{i=s+1}^{s+t}y_{i},$$
which implies that
$$x_{1}=\frac{n_{1}}{q(G_{1})-(n-2n_{1})}, ~~ x_{s}=\frac{n_{s}}{q(G_{1})-(n-2n_{s})}, $$
$$q(G_2)(y_{1}-y_{s})=(n-2(n_1-1))y_1-(n-2(n_s+1))y_s.$$
Furthermore,
$$ y_1=\frac{q(G_2)-n+2(n_s+1)}{q(G_2)-n+2(n_1-1)}y_s.$$
Thus $x_1<\frac{1}{2},$ $x_1>x_s$ and $y_1\le y_s$ as  $n_{1}>n_s> 0,$ $q(G_1)> n,$ $q(G_2)>n$ and $q(G_2)-n+2(n_1-1)\geq q(G_2)-n+2(n_s+1) $. Combining with the equation $(\ref{Jin-1})$,
\begin{eqnarray*} (q(G_1)-q(G_2))\mathbf{x}^{\mathrm{T}}\mathbf{y}&=&(1- 2x_1)y_1+(2x_s-1)y_s\leq (1- 2x_1)y_s+(2x_s-1)y_s\\
&=&2(x_s-x_1)y_s  <0,
\end{eqnarray*}
which implies that $ q(G_1)<q(G_2).$
\end{proof}

\begin{lemma}\label{the upper bound of q} Let  $c \geq 1$ be a constant independent of $n$ and $1\leq a\leq  \frac{n}{16c}$ be  two integers.    If   $n$ is  sufficiently large, then
 $q(K_{c}\vee K_{a,n-a-c})<n+\frac{\sqrt{n}}{2}.$
\end{lemma}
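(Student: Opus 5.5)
We want $q(K_c\vee K_{a,n-a-c})<n+\frac{\sqrt n}{2}$ when $1\le a\le n/(16c)$. The plan is to use the equitable quotient matrix of the three-part partition $\{K_c,\ A,\ B\}$, with $|A|=a$ and $|B|=b=n-a-c$. Then bound its largest eigenvalue by evaluating the characteristic polynomial at $x_0=n+\frac{\sqrt n}{2}$, and check that $x_0$ exceeds all its roots. By Lemmas~\ref{equitable-1} and \ref{equitable-2}, $q$ equals the spectral radius of the quotient matrix, since the graph is connected and the Perron vector is constant on parts. The quotient matrix of $Q$ is
$$B=\left[\begin{array}{ccc} n+c-2 & a & b\\ c & c+b & b\\ c & a & c+a\end{array}\right].$$

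\textbf{Alternative via the Perron vector.} A cleaner route avoids expanding the cubic by eliminating from the eigen-equations directly. Write $q=q(G)$ and let $x,y,z$ be the Perron entries on $K_c$, $A$, $B$ respectively. The equations are
$$(q-n-c+2)x=ay+bz,\qquad (q-c-b)y=cx+bz,\qquad (q-c-a)z=cx+ay.$$
Suppose for contradiction that $q\ge n+\frac{\sqrt n}{2}$.

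First, from the first equation $ay+bz\le (n-2)\max(y,z)$, so $x\le \frac{n-2}{q-n-c+2}\max$. Hence $x\lesssim 2\sqrt n\,\max(y,z)$; this crude bound needs refinement.

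Better is to solve linearly. Subtracting the second and third equations gives $(q-c-b)y-(q-c-a)z=bz-ay$, i.e. $(q-c)(y-z)=b(y+z)-a(y+z)=(b-a)(y+z)$. Since $q-c$ is close to $n$ and $b-a\approx n$, the entries $y$ and $z$ are comparable. Next, $q$ satisfies $q-c-a=(cx+ay)/z$ and $q-c-b=(cx+bz)/y$. I would normalize $z=1$ and express $y$ and $x$ in terms of $q$. From the difference relation, $y=\frac{q-c+b-a}{q-c-b+a}$. Here $q-c-b+a\approx 2a+\frac{\sqrt n}{2}$, so $y$ is large, of order $n/(2a+\sqrt n/2)$. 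Then $x=\frac{ay+b}{q-n-c+2}$. Substituting into the third equation, $q-c-a=cx+ay$, turns the problem into a single inequality in $q$ that must fail at $q=x_0$. Note that the right side grows as $q$ decreases, which is consistent with $q$ being the largest root.

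\textbf{Key estimate.} At $q=x_0$ we have $ay=\frac{a(2n-2a-2c+\sqrt n/2)}{2a+\sqrt n/2}$. This is at most $n-a-c+\frac{\sqrt n}{4}$ roughly, and more precisely $ay<n-\frac{\sqrt n}{4}\cdot\frac{(n-\dots)}{(2a+\sqrt n/2)}$. Then $x\approx \frac{ay+n}{\sqrt n/2}\le 4\sqrt n$. Hence $cx+ay\le 4c\sqrt n+ay$, and the requirement $q-c-a=cx+ay$ reads $n+\frac{\sqrt n}{2}-c-a$ versus $ay+cx$.

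\textbf{Main obstacle.} The delicate point is balancing $ay$ against $n$. When $a$ is small, $ay$ is much less than $n$ and the comparison is easy. When $a\approx n/(16c)$, the deficit $n-ay\approx \frac{n\sqrt n/2}{2a}\ge 4c\sqrt n$ must beat $cx\approx 4c\sqrt n$; this is exactly where the hypothesis $a\le n/(16c)$ is used, with the factor $16$ giving slack. I would carry out this comparison carefully, treating $a\le\sqrt n$ and $a>\sqrt n$ separately. The result is that the characteristic function, taken as $q\mapsto (q-c-a)-cx(q)-ay(q)$ and increasing for $q>n$, is positive at $x_0$. Therefore the largest root is smaller than $x_0$, giving $q(K_c\vee K_{a,n-a-c})<n+\frac{\sqrt n}{2}$.
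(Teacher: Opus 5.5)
Your reduction is correct, and it is the paper's argument in different clothing. Normalize $z=1$ and let $F(q)=(q-c-a)-c\,x(q)-a\,y(q)$. Then $F(q)\,(q-n+2a)(q-n-c+2)=h(q)$, the characteristic polynomial of the quotient matrix, so the automatic monotonicity of $F$ replaces the paper's analysis of $h'$. Mind the domain: $F$ has a pole at $q=n+c-2$, which exceeds $n$ when $c\ge 3$, so $F$ is increasing on $(n+c-2,\infty)$ rather than on $q>n$. Since $x_0>n+c-2$, this is harmless.

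The genuine gap is the decisive inequality $F(x_0)>0$. You do not prove it, and the estimates you outline for it fail exactly in the critical range $a\approx n/(16c)$.

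Write $s=\sqrt n/2$. First, $n-ay$ is not $\approx\frac{ns}{2a}$. It equals $\frac{s(n-a)+2a(a+c)}{2a+s}\approx\frac{ns}{2a}+a$, and the extra $a$ is what cancels the $-a$ in $x_0-c-a$. The exact statement is $x_0-c-a-ay=\frac{s(n+s-c)}{2a+s}$.

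Second, your bound $cx\le 4c\sqrt n$ throws away precisely the margin that makes the lemma true. At $a=n/(16c)$ one has $\frac{s(n+s-c)}{2a+s}=4c\sqrt n+2c-16c^2+o(1)<4c\sqrt n$, so the comparison you propose is false there. The factor $16$ gives no slack: to leading order $F(x_0)$ has the sign of $\frac14-4c\frac an(1-\frac an)$, so the hypothesis is nearly sharp. The true margin, about $\sqrt n/4$ at $a=n/(16c)$, comes from $ay+b\approx 2(n-a)$, i.e.\ $cx\approx 4c(1-\frac an)\sqrt n$.

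To close the argument, keep everything exact. Since $ay+b=\frac{(n-a-c)(4a+s)+as}{2a+s}$,
\[
F(x_0)(2a+s)(s-c+2)=s(n+s-c)(s-c+2)-cs(n-c)-4ac(n-a-c).
\]
Here the first two terms equal $\frac{n^2}{4}+(\frac98-c)n^{3/2}+O(n)$, while $4ac(n-a-c)\le 4ac(n-a)\le\frac{n^2}{4}-\frac{n^2}{64c}$ for $a\le n/(16c)$. The right side is therefore at least $\frac{n^2}{64c}-O(n^{3/2})>0$, which is precisely the paper's bound on $h(\alpha)$.
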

\begin{proof}
Let $G=K_{c}\vee K_{a,n-a-c}$ and $\pi$ be the partition of $V(G)$ consisting of the vertex set of $K_{c}$ and two partite sets of
$K_{a,n-a-c}.$ Then $\pi$ is an  equitable partition corresponding to the matrix $Q(G).$ So
 $Q(G)$ has the equitable quotient matrix
$$B_{\Pi}=\left[\begin{array}{ccc}
n+c-2 & a &  n-a-c \\
c & n-a & n-a-c  \\
c   & a     &  a+c \\
\end{array}\right].$$
By a simple calculation,  the characteristic polynomial of $B_{\Pi}$ is
$$h(x)=x^3-2(n+c-1)x^2+(2nc+n^2+2c^2-2n-2c)x+4ac^2+2nc-2nc^2+4ca^2-4cna.$$
By Lemmas~\ref{equitable-1} and \ref{equitable-2}, $q(G)=\lambda(B_{\Pi})$ is the largest root of $h(x)=0.$
Furthermore, the largest root of
$h'(x)=3x^2-4(n+c-1)x+n^2+2(nc+c^2-n-c)=0$ is $x_0=\frac{2(n+c-1)+\sqrt{(n+c-1)^2-3(c^2-1)}}{3}\leq n+c-1<n+\frac{\sqrt{n}}{2}$ as $n$ is sufficiently large. Thus $h(x)$ is monotone increasing in  $[x_0, +\infty)$.

Set $\alpha=n+\sqrt{n}/2$. A direct calculation gives
\begin{align*}
h(\alpha)
&=\frac{n^2}{4}
+\left(\frac{9}{8}-c\right)n^{3/2}
+\frac{1-c}{2}n+c(c-1)\sqrt{n}
+4ac(a+c-n).
\end{align*}
Regarding the right-hand side as a function of $a$,
its derivative is
\[
4c(2a+c-n)<0
\]
for $1\le a\le n/(16c)$ and sufficiently large $n$.
Consequently,
\begin{align*}
h(\alpha)
&\ge \frac{n^2}{64c}
+\left(\frac{9}{8}-c\right)n^{3/2}
+\left(\frac{1}{2}-\frac{c}{4}\right)n
+c(c-1)\sqrt{n}>0,
\end{align*}
where the last inequality holds because $c$ is fixed
and $n$ is sufficiently large.
Since $x_0<\alpha$ and $h(x)$ is strictly increasing
on $[\alpha,\infty)$, its largest root is less
than $\alpha$. Hence $
q(G)<n+\frac{\sqrt{n}}{2}.
$
\end{proof}

\section{Proof of Theorem~\ref{main result}}

Throughout this section, $k\geq 2$ is fixed and $n$ is sufficiently large in terms of $k.$ Choose  $\epsilon=\frac{1}{1000k^2}$ and $\epsilon'=\frac{1}{96(k-1)}.$
Let $G$ be a signless Laplacian spectral extremal graph for  $kK_3$ of  order $n$. Set $q = q(G)$ for short.
Let $\mathbf{x}$ be the Perron vector of $G$
and  $u^{*}\in V(G)$ be a vertex with $x_{u^{*}}=1.$ 
Next we prove the following structural lemmas for $G$.

\begin{lemma}\label{}
$G$ is connected.
\end{lemma}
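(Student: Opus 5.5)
We argue by contradiction: assuming $G$ is disconnected, we build a $kK_3$-free graph $G'$ of order $n$ with $q(G')>q(G)$, which contradicts the extremality of $G$.

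First, note that $K_{k-1}\vee T_2(n-k+1)$ is $kK_3$-free. This follows from Lemma~\ref{join} with $t=k-1$ and $H=T_2(n-k+1)$, since $H$ is bipartite and hence $K_3$-free. Extremality of $G$ then gives $q(G)\ge q(K_{k-1}\vee T_2(n-k+1))>n-2+\sqrt{(k-1)n}$ by Lemma~\ref{lower bound}.

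Now suppose $G$ is disconnected. The signless Laplacian spectrum of $G$ is the union of the spectra of its components, so $q(G)=q(G_1)$ for some component $G_1$ of order $n_1<n$. Choose a vertex $v$ outside $G_1$ and form $G'$ from $G$ as follows: delete all edges incident to vertices outside $G_1$, then join $v$ to exactly one vertex $w\in V(G_1)$. Call the resulting component $G_1'=G_1+vw$, and leave every other vertex isolated.

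\textbf{Step 1: $G'$ is $kK_3$-free.} The vertex $v$ has degree $1$ in $G'$, so it lies in no triangle. Every other vertex outside $G_1$ is isolated. Hence every triangle of $G'$ lies in $G_1\subseteq G$, and any $k$ disjoint triangles in $G'$ would already be $k$ disjoint triangles in $G$, which is impossible.

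\textbf{Step 2: $q(G')>q(G)$.} The graph $G_1'$ is connected and contains $G_1$ (together with the isolated vertex $v$) as a proper spanning subgraph. Applying Lemma~\ref{subgraph} to the connected graph $G_1'$ gives $q(G_1')>q(G_1)$, since $G_1\cup\{v\}$ is a proper subgraph of $G_1'$ and $q(G_1\cup\{v\})=q(G_1)$. Therefore $q(G')\ge q(G_1')>q(G_1)=q(G)$, contradicting the choice of $G$.

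The only point needing care is Step 1, namely checking that attaching a pendant edge creates no new triangle. This is immediate because the new vertex $v$ has degree one. We do not need the lower bound on $q(G)$ for this argument; it only confirms that $G$ has a nontrivial component, which any construction already guarantees.
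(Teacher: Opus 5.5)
Your proof is correct and follows essentially the same route as the paper: add a bridge edge at the component $G_1$ realizing $q(G)$, note that a bridge lies in no triangle so the graph stays $kK_3$-free, and apply Lemma~\ref{subgraph} for the strict increase of $q$. The only cosmetic difference is that the paper keeps all components and joins $G_1$ to $G_2$ by one edge, while you first delete the edges outside $G_1$ and attach a pendant vertex; this is harmless but unnecessary, as is your opening paragraph on the lower bound for $q(G)$.
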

\begin{proof}
Assume for a contradiction that $G$ is not connected.
Assume that $G_{1},\cdots, G_{s}$ are $s$ connected components of $G$ with $q(G_{1})=q.$
Let $H$ be the graph obtained from $G_{1}\cup G_{2}$ by adding an edge between $G_{1}$  and $G_{2}$. Clearly, $H\cup G_{3}\cup \cdots  \cup G_{s}$ is still $kK_3$-free. However, by Lemma~\ref{subgraph}, we obtain $q(H\cup G_{3}\cup \cdots  \cup G_{s})=q(H) > q(G_{1})=q$, a contradiction.
\end{proof}

\begin{lemma}\label{maximum degree is n-1}
$\Delta(G)=n-1.$
\end{lemma}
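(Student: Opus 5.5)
The plan is to argue by contradiction: suppose $\Delta(G)\le n-2$ and produce a $kK_3$-free graph with strictly larger signless Laplacian spectral radius. The engine is a local switching observation. Call a vertex $w$ \emph{rich} if the induced subgraph $G[N_G(w)]$ has matching number at least $3k$, i.e.\ contains $3kK_2$. I claim that if $w$ is rich, the graph $G^+$ obtained from $G$ by joining $w$ to every non-neighbour is still $kK_3$-free. To see this, suppose $G^+$ contains $k$ disjoint triangles. Any triangle using a new edge must contain $w$, so exactly one triangle $wzy$ uses new edges. The other $k-1$ triangles lie in $G$ and cover $3k-3$ vertices. Together with $z,y$ (and $w$ itself, which is not in $N_G(w)$), at most $3k-1$ vertices of $N_G(w)$ are forbidden. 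Since $G[N_G(w)]$ has $3k$ disjoint edges, some edge $ab$ of $G[N_G(w)]$ avoids all of them. Replacing $wzy$ by $wab$ gives $kK_3$ in $G$, a contradiction. Since $G$ is connected (previous lemma), $G$ is a proper subgraph of the connected graph $G^+$ whenever $d_G(w)<n-1$, so Lemma~\ref{subgraph} gives $q(G^+)>q$. This contradicts extremality. Hence every rich vertex has degree $n-1$, and it suffices to find one rich vertex.

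Next I would record the lower bound coming from extremality. Since $K_{k-1}\vee T_2(n-k+1)$ is $kK_3$-free (Lemma~\ref{join} with $t=k-1$, as $T_2$ is triangle-free), Lemma~\ref{lower bound} gives
\[
q>n-2+\sqrt{(k-1)n}.
\]
The natural candidate for a rich vertex is $u^*$, where $x_{u^*}=1$. Let $d=d_G(u^*)$ and suppose $u^*$ is not rich. Then $G[N_G(u^*)]$ is $3kK_2$-free, and Lemma~\ref{dege number of disjoint matchings} gives $e(N_G(u^*))\le (3k-1)d$. I would then use the eigen-equations at $u^*$ and, summed, over its neighbourhood:
\[
q=d+\sum_{v\in N(u^*)}x_v,\qquad (q-d)\,q=\sum_{v\in N(u^*)}\Big(d_v x_v+\sum_{w\sim v}x_w\Big).
\]
The edges counted split into those inside $N(u^*)$, contributing $O(kd)$, and those from $N(u^*)$ to the remaining $n-d$ vertices. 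The aim is an inequality of the form $q(q-d)\le (\text{main term})+O(kn)$ that is incompatible with $q\ge n-2+\sqrt{(k-1)n}$ for large $n$.

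The main obstacle is this counting step. The crude bound $x_v\le1$ loses a factor $2$ on the cross edges and gives only $n(n-d)\lesssim 2d(n-d)$, which is no contradiction. To fix this, I would first try to control the entries: show that the vertices outside $N(u^*)$ have small total weight, using the eigen-equation together with Lemma~\ref{larger component derive larger degree}-type estimates. If $u^*$ itself cannot be handled, the fallback is to show some vertex is rich by counting triangles. If no vertex were rich, every vertex $w$ would lie in at most $(3k-1)d(w)$ triangles, so $G$ would have $O(kn^2)$ triangles. Such a graph, being nearly triangle-poor, should satisfy $q\le n+O(k)$, via a comparison with the bound (\ref{Merris bound}) on $q$ after deleting a few edges, and this contradicts $q>n-2+\sqrt{(k-1)n}$. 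This contradiction then forces a rich vertex, and the first paragraph finishes the proof.
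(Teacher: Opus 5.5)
Your first paragraph is correct: the switching argument showing that a rich vertex must have degree $n-1$ does the job of the paper's Claims~\ref{property of neighbors}--\ref{degree of n-1}. The gap is that you never actually produce a rich vertex.

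\textbf{Neither route finds a rich vertex.} The $u^*$ route is left open by your own account. The fallback rests on a false statement. Summing ``$w$ lies in at most $(3k-1)d_G(w)$ triangles'' into ``$G$ has $O(kn^2)$ triangles'' discards exactly the information you need. To see this, take $s=\lfloor\sqrt{kn}\rfloor$ and the graph $K_s\vee\overline{K}_{n-s}$. It has at most $\binom{s}{3}+\binom{s}{2}(n-s)\le s^2n/2\le kn^2/2$ triangles, well within your aggregated bound. Yet
\[
q(K_s\vee\overline{K}_{n-s})=\tfrac12\bigl(n+2s-2+\sqrt{(n+2s-2)^2-8s(s-1)}\bigr)\ge n+2s-2-4k ,
\]
which exceeds $n-2+\sqrt{(k-1)n}$ for large $n$. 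So a global triangle count cannot yield $q\le n+O(k)$, however you then delete edges.

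\textbf{The missing step.} Apply (\ref{Merris bound}) pointwise at its maximizing vertex $z$, not at $u^*$ and not after summing. For any vertex $u$, each $v\in N_G(u)$ satisfies $d_G(v)\le 1+d_{N_G(u)}(v)+(n-1-d_G(u))$. Hence
\[
d_G(u)+\frac{1}{d_G(u)}\sum_{v\in N_G(u)}d_G(v)\le n+\frac{2e(N_G(u))}{d_G(u)}.
\]
Suppose $z$ were not rich, so $G[N_G(z)]$ is $3kK_2$-free. Lemma~\ref{dege number of disjoint matchings}, together with $e(N_G(z))\le\binom{d_G(z)}{2}$ when $d_G(z)<6k-1$, gives $2e(N_G(z))/d_G(z)\le 6k-2$. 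Then (\ref{Merris bound}) gives $q\le n+6k-2$, contradicting $q>n-2+\sqrt{(k-1)n}$. So $z$ is rich, and your first paragraph finishes the proof.

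This is the paper's Claim~\ref{independent edges}, which in fact obtains $\lfloor\sqrt{(k-1)n}/2\rfloor$ disjoint edges in $G[N_G(z)]$. Your per-vertex bound $e(N_G(w))\le(3k-1)d_G(w)$ was the right input; it only needed to be fed into (\ref{Merris bound}) at a single vertex rather than summed. The Perron vector $\mathbf{x}$ plays no role at this stage.
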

\begin{proof}
Let $z \in V(G)$ such that
\begin{eqnarray}\label{equality}
d_G(z) + \frac{1}{d_G(z)} \sum_{v \in N_G(z)} d_G(v) = \max\left\{ d_G(u) + \frac{1}{d_G(u)} \sum_{v \in N_G(u)} d_G(v) : u \in V(G) \right\}.
\end{eqnarray}
We first prove the following claims.
\begin{claim}\label{independent edges}
$G[N_G(z)]$ contains at least $\left\lfloor\frac{\sqrt{(k-1)n}}{2}\right\rfloor$ independent edges.
\end{claim}
\begin{proofclaim}~{\bf \ref{independent edges}.}
Let $t=\sqrt{(k-1)n}-2$ and $m=\left\lfloor\frac{t+2}{2}\right\rfloor.$
Since $K_{k-1} \vee T_{2}(n-k+1)$ is a $kK_3$-free graph, by Lemma~\ref{lower bound}, we have
\begin{eqnarray}\label{lower bound of q}
q \ge q\left(K_{k-1} \vee T_{2}(n-k+1)\right) > n +\sqrt{(k-1)n}-2=n+t.
\end{eqnarray}

We first assert that $d_G(z) \geq 2m$. Since $G$ is connected and $\Delta(G) \le n-1$, by (\ref{lower bound of q}) and Theorem~\ref{the upper bound of q(G)}, we obtain
\[
n +t < q \le d_G(z) + \frac{1}{d_G(z)} \sum_{v \in N_G(z)} d_G(v) \le d_G(z) + \Delta(G) \le d_G(z) + n - 1,
\]
which deduces that $d_G(z) > t+1\geq 2m-1.$ So $d_G(z) \geq 2m$.

Now we prove Claim~\ref{independent edges} by way of contradiction. Assume that $G[N_{G}(z)]$ is $m K_{2}$-free. Now we divide it into two cases.

{\bf Case~1}. $2m\le d_G(z) < \frac{5m-2}{2}.$
By Lemma~\ref{dege number of disjoint matchings}, we have
$e(N_{G}(z))\leq \binom{2m-1}{2}.
$
Combining this with (\ref{lower bound of q}) and Theorem~\ref{the upper bound of q(G)}, we obtain
\begin{eqnarray*}
n + t < q &\le& d_G(z) + \frac{1}{d_G(z)} \sum_{v \in N_G(z)} d_G(v) \\
&=& d_G(z) + 1 + \frac{2e(N_G(z)) + e(N_G(z), V(G) \setminus (N_G(z) \cup \{z\}))}{d_G(z)} \\
&\le& d_G(z) + 1 + \frac{2e(N_G(z)) + d_G(z)(n - 1 - d_G(z))}{d_G(z)} \\
&=& n + \frac{2e(N_G(z))}{d_G(z)}
\le  n + \frac{(2m-1)(2m-2)}{d_G(z)}<n+\frac{(t+1)t}{t+1}= n +t ,
\end{eqnarray*}
a contradiction.

{\bf Case~2}. $ d_G(z)\geq \frac{5m-2}{2}.$
By Lemma~\ref{dege number of disjoint matchings}, we have
$
e(N_{G}(z))\leq  \left(m-1\right)d_G(z)-\frac{m(m-1)}{2}.
$
Combining this with (\ref{lower bound of q}) and Theorem~\ref{the upper bound of q(G)}, we obtain
\begin{eqnarray*}
n +t< q &\le& d_G(z) + \frac{1}{d_G(z)} \sum_{v \in N_G(z)} d_G(v) \\
&\leq&
n + \frac{2e(N_G(z))}{d_G(z)}
\leq  n +2(m-1)-\frac{m(m-1)}{d_{G}(z)}
<n+t,
\end{eqnarray*}
a contradiction.
\end{proofclaim}

\begin{claim}\label{property of neighbors}
$G-z$ is $(k-1)K_3$-free.
\end{claim}
\begin{proofclaim}~{\bf \ref{property of neighbors}.}
Suppose to the contrary that $G-z$ contains a copy of $(k-1)K_3,$ denoted by $H.$ Note that the  vertices of $H$ are incident to at most $3(k-1)$ independent edges in $G[N_G(z)]$. By Claim 3.1, there are at least $3k-2$ independent edges in $G[N_G(z)]$ as  $n$ is sufficiently large. Hence, there exists an edge $uv \in E(G[N_G(z)])$ such that $u, v \in N_G(z) \setminus V(H)$. Thus, $G[\{z, u, v\}]\cup H$ is   a copy of $kK_{3}$ in $G$,   a contradiction.
\end{proofclaim}

\begin{claim}\label{degree of n-1}
$d_G(z) = n - 1$.
\end{claim}
\begin{proofclaim}~{\bf \ref{degree of n-1}.}
Suppose to the contrary that $d_G(z) \le n - 2$. Let
\[
G' = G + \{zv : v \in V(G)\setminus (N_G(z)\cup \{z\})\}.
\]
Obviously, $G' = K_1 \vee (G-z)$, where $K_1$ corresponds to the vertex $z$. According to Claim~\ref{property of neighbors}, $G-z$ is $(k-1)K_3$-free, and so $G'$ is $kK_3$-free by Lemma~\ref{join}. However, by Lemma~\ref{subgraph}, we obtain $q(G') > q$, a contradiction.
\end{proofclaim}

By Claim~\ref{degree of n-1}, we complete the proof of Lemma~\ref{maximum degree is n-1}.
\end{proof}

Let $A=\{v\in V(G)\  | \ x_{v}>\frac{3}{4}\},$ $B=\bigcap_{v\in A}N_{G}(v)$ and $C=V(G)\setminus (A\cup B).$

Adding any edge $e\notin E(H)$ to any connected graph $H$ will increase the signless Laplacian spectral radius of $H$ and the  maximum number of disjoint  triangles increases at most one.  Since $G$ is extremal and connected, it contains a copy of $(k-1)K_{3}.$

Since $G$ is $kK_{3}$-free and $K_{k-1}\vee T_{2}(n-k+1)$ is  $kK_{3}$-free, by Lemmas~\ref{upper bound of q} and~\ref{lower bound}, for  sufficiently large $n$, we have
\begin{eqnarray}\label{upper and lower bounds of q}
n+\sqrt{(k-1)n}-2<q<n+\epsilon n.
\end{eqnarray}

\begin{lemma}\label{the upper bound of size A}
$|A|\leq k-1.$
\end{lemma}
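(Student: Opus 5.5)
We argue by contradiction: suppose $|A|\geq k$ and pick $k$ vertices $a_1,\dots,a_k\in A$. The plan is to show that these vertices have huge common neighbourhood, containing many disjoint edges, so that each $a_i$ can be completed to a triangle using a private edge. The $k$ triangles are then vertex-disjoint, contradicting that $G$ is $kK_3$-free.

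\emph{Step 1 (degrees of vertices in $A$).} By Lemma~\ref{maximum degree is n-1}, $\Delta(G)=n-1$, so Lemma~\ref{larger component derive larger degree} applies. For each $a\in A$ it gives $d_G(a)>\frac{4(n-2)}{3}-\frac{q}{3}$. Using the upper bound $q<n+\epsilon n$ from (\ref{upper and lower bounds of q}), we get
$$d_G(a)>n-\frac{\epsilon n}{3}-\frac{8}{3}.$$
So every vertex of $A$ misses at most about $\epsilon n/3$ vertices.

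\emph{Step 2 (common neighbourhood).} Apply Lemma~\ref{property of cup of sets} to $N_G(a_1),\dots,N_G(a_k)$, whose union has size at most $n$. This yields
$$|W|\ge n-k\epsilon n/3-O(k), \qquad W=\bigcap_{i=1}^k N_G(a_i)\setminus\{a_1,\dots,a_k\}.$$
With $\epsilon=\frac{1}{1000k^2}$ we have $|W|\ge n/2$.

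\emph{Step 3 (many disjoint edges inside $W$).} We need $k$ independent edges in $G[W]$. The idea is that $G$ has about $n^2/4$ edges while $G[W]$ misses only $O(\epsilon n^2)$ vertex pairs' worth of edges; concretely, I would argue through degrees. Fix $u^*$ with $x_{u^*}=1$; by Lemma~\ref{maximum component implies maximum degree}, $d(u^*)=n-1$, so $u^*\in A$. The eigen-equation at $u^*$ gives
$$\sum_{v\ne u^*}x_v=q-(n-1)>\sqrt{(k-1)n}-1.$$
This alone does not give edges in $W$, so I would instead use the edge count $e(G)\ge e(K_{k-1}\vee T_2(n-k+1))$. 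This holds because $q\le \max_u\{d_u+m_u\}$, combined with $q\geq q(\text{extremal})$, which gives roughly $q\le 4e/n$-type control; equivalently, via Lemma~\ref{upper bound of q}, $G$ contains $T_{k+1}((k+1)t)$. The simpler route is the following. Suppose $G[W]$ had no $k$ independent edges. By Lemma~\ref{dege number of disjoint matchings}, $e(W)\le (k-1)|W|$, so almost all edges at a vertex of $W$ go outside $W$, and $|V\setminus W|\le k\epsilon n/3+O(k)$. Hence every vertex $w\in W$ has degree at most $k\epsilon n/3+O(k)+\deg_{G[W]}(w)$, and the average of $\deg_{G[W]}$ is at most $2(k-1)$. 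Then Theorem~\ref{the upper bound of q(G)} (Merris' bound) at any vertex gives a contradiction with $q>n$. At $w\in W$ the bound is at most $\epsilon$-small times $n$, plus $\max$ degree $\le n$; at vertices $v\notin W$ the neighbours are mostly in $W$ and have small degree, so $d_v+\frac1{d_v}\sum d \le n + O(k\epsilon n)$ only with room to spare. The precise estimate needs care: one wants $\frac{1}{d_v}\sum_{u\in N(v)} d_u$ to be well below $n/2$ when most neighbours lie in $W$ with degree $\le k\epsilon n$, which forces the value below $n$. This contradicts $q>n+\sqrt{(k-1)n}-2$.

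\emph{Step 4 (conclusion).} Having $k$ independent edges $u_iv_i$ in $G[W]$, the triangles $a_iu_iv_i$ for $i=1,\dots,k$ are vertex-disjoint, giving $kK_3\subseteq G$, a contradiction. Hence $|A|\le k-1$. The main obstacle is Step 3, namely ensuring $G[W]$ has a matching of size $k$. I would first try the Merris-bound averaging above. If that is delicate, the fallback is to use Lemma~\ref{upper bound of q}/Theorem~\ref{stability result  for the signless}-style density and note that $G$ has $\Omega(n^2)$ edges, all but $O(\epsilon n^2)$ of them inside $W$.
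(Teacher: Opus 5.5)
Your Steps 1 and 2 are correct; they reproduce the paper's bound $|B'|>n-\epsilon' n$. Step 3, however, has a genuine gap, and it sits exactly where the difficulty of the lemma lies: the Merris bound (Theorem~\ref{the upper bound of q(G)}) cannot deliver the contradiction.

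A contradiction with $q>n$ is impossible in principle, since $\Delta(G)=n-1$ already gives $q\ge n$. The only usable lower bound is $q>n+\sqrt{(k-1)n}-2$, so you need an upper bound accurate to $O(\sqrt n)$. In the configuration you reach, the Merris quantity is only $n+O(\epsilon n)$. (If $G[W]$ has no $k$ independent edges, then $W$ minus at most $2k-2$ vertices is independent, and only $O(\epsilon n)$ vertices lie outside $W$.)

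Here is a concrete example. Let $a=\lfloor \epsilon n/4\rfloor$ and take $K_{k-1}\vee K_{a,n-k+1-a}$.
\begin{itemize}
\item It is $kK_3$-free.
\item The $k-1$ clique vertices together with one vertex of the $a$-side are $k$ vertices of degree $>n-\epsilon n/3-8/3$.
\item Their common neighbourhood is the independent large side.
\end{itemize}
So everything you derive before invoking Merris holds. Yet at a clique vertex $v$ we have $d_v=n-1$ and $\frac{1}{d_v}\sum_{w\in N(v)}d_w\approx 2a$, so the Merris value is about $n+\epsilon n/2$, far above $n+\sqrt{(k-1)n}$. Your assertion that an average neighbour degree well below $n/2$ ``forces the value below $n$'' is where the estimate breaks: with $d_v=n-1$ you would need average neighbour degree at most $1$. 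The true $q$ of this graph is indeed $<n+\sqrt n/2$, but only the exact computation of Lemma~\ref{the upper bound of q} shows this.

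The fallback fails as well, for several independent reasons:
\begin{itemize}
\item $q\ge 4e(G)/n$ goes the wrong way to yield $e(G)\ge e(K_{k-1}\vee T_2(n-k+1))$.
\item A lower bound $q>n+\sqrt{(k-1)n}$ does not force $\Omega(n^2)$ edges; consider $K_s\vee\overline{K}_{n-s}$ with $s\approx\sqrt{(k-1)n}$.
\item Theorem~\ref{stability result  for the signless} needs $\chi(F)\ge 4$.
\item Lemma~\ref{upper bound of q} only applies when $q\ge n+\Omega(n)$.
\item In the example above, no edge lies inside $W$ at all.
\end{itemize}

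The paper never tries to place $k$ triangles inside the common neighbourhood. Instead it uses $kK_3$-freeness to obtain structure and then concludes spectrally.
\begin{itemize}
\item Since $G$ contains a $(k-1)K_3$, one finds a copy $H_0$ with $W\setminus V(H_0)$ independent, re-routing the triangle through two of the $a_i$ if necessary.
\item If some $a_i$ lies outside $H_0$, let $u$ be the one with largest entry. Then the $a_i$ outside $H_0$ form an independent set, and so does $N_G(u)\setminus V(H_0)$, since any edge there closes a $k$-th triangle.
\item Lemma~\ref{symmetrize and adding edges} then gives $q\le q(K_{3(k-1)}\vee K_{a,\,n-3(k-1)-a})$ with $a<\epsilon' n$. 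A similar bound with $3(k-1)-1$ handles the case where all $a_i$ lie in $H_0$.
\item The explicit cubic in Lemma~\ref{the upper bound of q} shows this is below $n+\sqrt n/2<n+\sqrt{(k-1)n}-2$.
\end{itemize}
This symmetrization step, together with the sharp $O(\sqrt n)$ estimate for $K_c\vee K_{a,n-a-c}$, is the missing ingredient. Some version of it would be needed to complete your Step~3 as well.
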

\begin{proof}
We prove it by way of contradiction. Let $|A|=t\geq k$ and  $A=\{u_{1},u_{2},\ldots,u_{t}\}.$
Without loss of generality, assume that
$x_{u_{1}}\geq x_{u_{2}}\geq \cdots\geq x_{u_{t}}.$ Denote by $A'=\{u_{1},u_{2},\ldots,u_{k}\},$
$B'=\cap_{v\in A'}N_{G}(v)$ and $C'=V(G)\setminus (A'\cup B').$
By (\ref{upper and lower bounds of q}), Lemmas~\ref{property of cup of sets} and \ref{larger component derive larger degree} , we have
\begin{eqnarray*}
|B'|&=&\left|\bigcap_{v\in A'}N_{G}(v)  \right|  \geq \sum_{v\in A'}\left|N_{G}(v)\right|-(|A'|-1)\left|\bigcup_{v\in A'} N_{G}(v)\right|\\
&> & |A'|\left(\frac{4(n-2)}{3}-\frac{q}{3}\right)-(|A'|-1)n
>n-\frac{k(8+\epsilon n)}{3}>n- \epsilon' n.
\end{eqnarray*}

\begin{claim}\label{independent set}
There is a copy of $(k-1)K_{3}$ in $G$ satisfying that $B'\setminus V_{0}$ is an independent set, where   $V_{0}$ is the vertex set of $(k-1)K_{3}.$
\end{claim}
\begin{proofclaim}~{\bf \ref{independent set}.}
Assume for a contradiction that for each copy of $(k-1)K_{3},$
$B'\setminus V((k-1)K_{3})$ is not an independent set. Let $H$ be a copy of $(k-1)K_{3},$ $B''=B'\setminus V(H)$ and  $b_{1}b_{2}\in E(G[B'']).$ We have $A'\subseteq V(H).$ Otherwise, assume that $w\in A'\setminus V(H),$ by the definition of $B',$  $H\cup G[\{b_{1},b_{2},w\}]$ is a copy of  $kK_{3}$ in $G,$ a contradiction.

Since $|A'|=k$ and there are only $k-1$ disjoint triangles in $G,$ there is one triangle $\Delta$ in $H$ containing two vertices of $A',$ denoted by $u_{1},u_{2}.$ 
Then $(H\setminus V(\Delta))\cup G[\{u_{1},b_{1},b_{2}\}]$ is a copy of $(k-1)K_{3}$ in $G,$ denoted  by $H'.$
Obviously, $u_{2}\in A'\setminus V(H').$  Thus, $B'\setminus V(H')$ is an independent set as $G$ is $kK_{3}$-free. Now $G$ contains a copy of $(k-1)K_{3}$ satisfying that $B'\setminus V(H')$ is an independent set, 
 a contradiction.
\end{proofclaim}

Choose a copy  of $(k-1)K_{3},$ denoted by $H_{0},$ which has  the property in Claim~\ref{independent set}. Let $V_{0}=V(H_{0}),$ $A''=A'\setminus V_{0},$  $B''=B'\setminus V_{0}$ and   $C''=C'\setminus V_{0}.$ Obviously, $B''$
is an independent set and
$$|B''|\geq |B'|-|V_{0}|>n-\epsilon'n-3(k-1)>0.$$
In the sequel, we divide it into two cases.

{\bf Case~1.} $A''\neq \emptyset.$
Since $|B''|>0$ and $G$ is $kK_{3}$-free,
$A''$
is an independent set. 
Let $u\in A''$ with $x_{u}=\max_{v\in A''}\{x_{v}\}.$ Let $C_{0}''=N_{G}(u)\cap C''$ and $C_{1}''=C''\setminus C_{0}''.$
Similarly,
$B''\cup C_{0}''$ is an independent set.  
Since  
$|A''|+|C_{1}''|=n-|V_{0}|-(|B''|+|C_{0}''|)\leq n-|V_{0}|-|B''|< \epsilon 'n $, by Lemmas~\ref{symmetrize and adding edges} and \ref{the upper bound of q}, we have $n+\sqrt{(k-1)n}-2<q\leq q(K_{3(k-1)}\vee K_{|A''|+|C_{1}''|,|B''|+|C_{0}''|})<n+\frac{\sqrt{n}}{2},$
a contradiction.

{\bf Case~2.} $A''= \emptyset.$
Since $|A'|=k$ and there are only $k-1$ disjoint triangles in $G,$ there is one triangle $\Delta$ in $H_0$ containing two vertices of $A',$ denoted by $u_{1},u_{2}.$ Without loss of generality, we assume that $x_{u_{1}}\geq x_{u_{2}}.$
Let $C_{0}''=N_{G}(u_{1})\cap C''$ and $C_{1}''= C'' \setminus C_{0}''.$

If $e(B''\cup  C_{0}'')\geq 1,$
then there  is an edge in $G[B''\cup C_{0}''],$  denoted by  $b_{1}b_{2}.$ 
Thus $(H_0-V(\Delta))\cup G[\{u_{1},b_{1},b_{2}\}]$ is a copy of $(k-1)K_{3}$ in $G,$ denoted  by $H'.$
Obviously,  $u_{2}\in A'\setminus V(H')\neq \emptyset.$ Using a similar discussion as in Case~1, we derive a contradiction.

If $e(B''\cup  C_{0}''))= 0,$
then $B''\cup  C_{0}''$ is an independent set. Since $|C_{1}''|=n-|V_{0}|-(|B''|+|C_{0}''|)<\epsilon 'n,$ 
by  Lemmas~\ref{symmetrize and adding edges} and \ref{the upper bound of q}, we have $n+\sqrt{(k-1)n}-2<q\leq q(K_{3(k-1)-1}\vee K_{|C_{1}''|+1,|B''|+|C_{0}''|})<n+\frac{\sqrt{n}}{2},$
a contradiction.

Thus, $|A|\leq k-1.$
\end{proof}

By Lemma~\ref{the upper bound of size A}, $|A|\leq k-1.$
By the definition of  $B,$ (\ref{upper and lower bounds of q}), Lemmas~\ref{property of cup of sets} and  \ref{larger component derive larger degree}, we have
\begin{eqnarray}\label{the size of B}
|B|&=&\left|\bigcap_{v\in A}N_{G}(v)  \right|  \geq \sum_{v\in A}\left|N_{G}(v)\right|-(|A|-1)\left|\bigcup_{v\in A} N_{G}(v)\right|\nonumber\\
&> & |A|\left(\frac{4(n-2)}{3}-\frac{q}{3}\right)-(|A|-1)n
>n-\frac{(k-1)(8+\epsilon n)}{3}>n- \epsilon' n.
\end{eqnarray}

\begin{lemma}\label{extremalstructure}
$d_{G}(u)=n-1$ for each $u\in A.$ Furthermore, if $H$ is a copy of $(k-1)K_3$ in $G$, then  there are $|A|$ disjoint  copies of  $K_3$ in  $H,$ each of which contains exactly one vertex in $A$.
\end{lemma}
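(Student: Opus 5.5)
We prove the two assertions in the order stated. Throughout we use $|A|\le k-1$ (Lemma~\ref{the upper bound of size A}), $|B|>n-\epsilon'n$ from (\ref{the size of B}), the bounds (\ref{upper and lower bounds of q}), and the fact, already noted, that $G$ contains a copy of $(k-1)K_3$.

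\textbf{First assertion: $d_G(u)=n-1$ for all $u\in A$.} We use the same local-modification argument as in Claim~\ref{degree of n-1} of Lemma~\ref{maximum degree is n-1}. Fix $u\in A$ and let $G'=K_1\vee(G-u)$, where $u$ plays the role of $K_1$. If $G-u$ is $(k-1)K_3$-free, then $G'$ is $kK_3$-free by Lemma~\ref{join}. If moreover $d_G(u)\le n-2$, then Lemma~\ref{subgraph} gives $q(G')>q$, contradicting extremality. So it suffices to show that $G-u$ is $(k-1)K_3$-free. Suppose instead that $H$ is a copy of $(k-1)K_3$ in $G-u$, and set $B^*=B\setminus V(H)$. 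Since $u$ is adjacent to every vertex of $B$, any edge $b_1b_2$ inside $B^*$ would give the triangle $\{u,b_1,b_2\}$, and together with $H$ this is a $kK_3$. Hence $B^*$ is independent.

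We then repeat the Case~1 argument of Lemma~\ref{the upper bound of size A}. Let $A^*=A\setminus V(H)$, which contains $u$. Since $B^*\ne\emptyset$ and every vertex of $A^*$ is complete to $B^*$, the set $A^*$ is independent. Pick $w\in A^*$ maximizing $x_w$, put $C^*=C\setminus V(H)$, and split $C^*$ into $C^*_0=C^*\cap N_G(w)$ and $C^*_1=C^*\setminus C^*_0$. The set $B^*\cup C^*_0$ is independent, because any edge in it together with $w$ would form a triangle disjoint from $H$. Moreover $|A^*|+|C^*_1|<\epsilon' n$. Lemma~\ref{symmetrize and adding edges} applies with $D=V(H)$; the maximum hypothesis there concerns only $A^*\cup C^*_1$, which is what we need, and choosing $w$ as the maximum over $A^*\cup C^*_1$ fixes the definition. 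It yields
\[
q\le q\bigl(K_{3(k-1)}\vee K_{|A^*|+|C^*_1|,\,|B^*|+|C^*_0|}\bigr).
\]
By Lemma~\ref{the upper bound of q} the right-hand side is less than $n+\sqrt n/2$, contradicting $q>n+\sqrt{(k-1)n}-2$ (recall $k\ge2$). Hence $G-u$ is $(k-1)K_3$-free, and therefore $d_G(u)=n-1$.

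\textbf{Second assertion.} Let $H$ be any copy of $(k-1)K_3$. The argument above shows that every copy of $(k-1)K_3$ must meet every $u\in A$, so $A\subseteq V(H)$. It remains to rule out a triangle $\Delta$ of $H$ containing two vertices $u_1,u_2\in A$. Suppose such a $\Delta$ exists. The set $B\setminus V(H)$ is independent, since an edge in it would form a triangle with $u_1$ disjoint from $H$. If some edge $b_1b_2$ lies in $B\setminus V(H)$, replace $\Delta$ by $\{u_1,b_1,b_2\}$. This produces a copy of $(k-1)K_3$ avoiding $u_2\in A$, contradicting the first step. If instead $B\setminus V(H)$ is independent, we have no edge to swap in, so we argue spectrally as in Case~2 of Lemma~\ref{the upper bound of size A}. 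Since $u_2\in A$ has degree $n-1$, any edge $b_1b_2$ inside $(B\cup C_0)\setminus V(H)$ gives the same swap $\{u_1,b_1,b_2\}$ and the same contradiction. So we may assume $(B\cup C_0)\setminus V(H)$ is independent. Then Lemma~\ref{symmetrize and adding edges}, with $D=V(H)\setminus\{u_2\}$ and the symmetrized class containing $u_2$, together with Lemma~\ref{the upper bound of q}, bounds $q<n+\sqrt n/2$, a contradiction. Consequently the vertices of $A$ lie in distinct triangles of $H$, which gives the $|A|$ triangles each containing exactly one vertex of $A$.

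\textbf{Main obstacle.} We expect the delicate point to be verifying the hypotheses of Lemma~\ref{symmetrize and adding edges}. The symmetrized set must be independent and must contain the vertex of maximal entry among $A^*\cup C^*_1$, and the class $B$ must lie in that vertex's neighbourhood. We also need the sizes $|A^*|+|C^*_1|<\epsilon' n\le n/(16\cdot 3(k-1))$ so that Lemma~\ref{the upper bound of q} applies. These conditions follow from the choice $\epsilon'=\frac{1}{96(k-1)}$ and (\ref{the size of B}).
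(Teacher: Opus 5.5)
Your proposal is correct and essentially follows the paper. Showing that $G-u$ is $(k-1)K_3$-free for each $u\in A$ is exactly the paper's claim that every copy of $(k-1)K_3$ contains $A$, proved by the same reuse of Case~1 of Lemma~\ref{the upper bound of size A}. You then get $d_G(u)=n-1$ from the join $K_1\vee(G-u)$, as in Claim~\ref{degree of n-1}, instead of the paper's two edge-addition claims ($C=\emptyset$ and $A$ a clique). Your final spectral step lands on the comparison graph $K_{3k-4}\vee K_{1,n-3k+3}\cong K_{3(k-1)}\vee\overline{K}_{n-3(k-1)}$, which the paper reaches directly by subgraph monotonicity without Lemma~\ref{symmetrize and adding edges}.

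Minor slips: an edge of $B\setminus V(H)$ does not form a triangle with $u_1$ disjoint from $H$ (since $u_1\in V(H)$), though your swap argument right after gives the correct reason. The $C_0$ case is vacuous, because the first assertion already forces $C=\emptyset$. Finally, the maximality hypothesis of Lemma~\ref{symmetrize and adding edges} needs no circular choice of $w$, since $x_w>\frac34\ge x_v$ for every $v\in C$.
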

\begin{proof}
Firstly, we prove the following claims.

\begin{claim}\label{distribution of vertices of triangles}
If $H$ is a copy of $(k-1)K_3$ in $G$, then $A\subseteq V(H).$
\end{claim}
\begin{proofclaim}~{\bf \ref{distribution of vertices of triangles}.}
Suppose $H$ is a copy of $(k-1)K_3$ in $G$ with $A \not\subseteq V(H).$ Let $V_{0}=V(H),$ $A'=A\setminus V_{0},$ $B'=B\setminus V_{0}$ and $C'=C\setminus V_{0}$.  Obviously, $A'\neq \emptyset.$
By (\ref{the size of B}) and $n$ is sufficiently large, $|B'|>0.$  Furthermore, since $G$ is $kK_{3}$-free,
$A'$
is an independent set.
Let $u\in A'$ with $x_{u}=\max_{v\in A'}\{x_{v}\},$
$C_{0}'=N_{G}(u)\cap C'$ and $C_{1}'=C'\setminus C_{0}'.$ Similarly, $B'\cup C_{0}'$ is an independent set.
Since $|A'|+|C_{1}'|=n-|V_{0}|-(|B'|+|C_{0}'|)<\epsilon 'n,$
 $A'$ and $B'\cup C_{0}'$ are   independent sets,
by Lemmas~\ref{symmetrize and adding edges} and~\ref{the upper bound of q}, we have $n+\sqrt{(k-1)n}-2<q\leq q(K_{3(k-1)}\vee K_{|A'|+|C_{1}'|,|B'|+|C_{0}'|})<n+\frac{\sqrt{n}}{2},$
a contradiction.
\end{proofclaim}

\begin{claim}\label{C is empty}
$C=\emptyset.$
\end{claim}
\begin{proofclaim}~{\bf \ref{C is empty}.}
Assume for a contradiction that $C \neq \emptyset$. By the definition of $C,$ there are two non-adjacent vertices  $w \in C$ and $u \in A$ in $G.$ Let $G'$  be a graph obtained from $G$ by adding an  edge $uw.$ Since $G$ is extremal,  $G'$ contains a copy of  $kK_3$ with one triangle containing the edge $uw.$  Delete the triangle
containing $uw$ from the copy of $kK_3$, we obtain  a copy of  $(k-1)K_3$ in $G$ such that
$A \not\subseteq V((k-1)K_3)$, which contradicts  Claim~\ref{distribution of vertices of triangles}. 
\end{proofclaim}

\begin{claim}\label{A is a complete graph}
For each $u,w \in A$,  $uw \in E(G)$.
\end{claim}
\begin{proofclaim}~{\bf \ref{A is a complete graph}.}
 Suppose, for a contradiction, that there are two non-adjacent vertices $u, w \in A$.
Let $G'$  be a graph obtained from $G$ by adding an  edge $uw.$ Using a similar discussion as in Claim~\ref{C is empty}, we obtain  a contradiction.
\end{proofclaim}

By Claims~\ref{C is empty} and~\ref{A is a complete graph}, we have  $d_G(u) = n-1$ for each $u \in A$.

\begin{claim}\label{|A|K3}
If $H$ is a copy of $(k-1)K_3$ in $G$, then each triangle in $H$ contains at most one vertex in $A$.
\end{claim}
\begin{proofclaim}~{\bf \ref{|A|K3}.}
Suppose for a contradiction that $H$ is a copy of $(k-1)K_3$ in $G$ with one triangle  containing  two vertices of $A$, denoted by $u,w$. Let $B'=V(G)\setminus V(H).$  Then $B'$ is an independent set. Otherwise, suppose $st\in E(G[B'])$, by $d_G(u)=n-1$, deleting  the triangle containing $u,w$ from $H$ and adding a new triangle with the vertex set  $\{u,s,t\}$ to $H$, we obtain  a copy of $(k-1)K_3$ in $G$ without  containing $w$, which contradicts to Claim~\ref{distribution of vertices of triangles}.
By the definition of $B,$  Claims~\ref{C is empty} and~\ref{A is a complete graph},  $G$ is a subgraph of $K_{3(k-1)}\vee \overline{K}_{n-3(k-1)}.$
By
 $q(K_{3(k-1)}\vee \overline{K}_{n-3(k-1)})=\frac{n+6k-8+\sqrt{(n+6k-8)^2-
24(k-1)(3k-4)}}{2},$
we have $n+\sqrt{(k-1)n}-2<q\leq q(K_{3(k-1)}\vee \overline{K}_{n-3(k-1)})<n+6k-8,$ a contradiction as $n$ is sufficiently large.
\end{proofclaim}

Hence, if $H$ is a copy of $(k-1)K_3$ in $G,$ then   there are $|A|$ triangles in  $H,$ each of which contains exactly one vertex in $A$ by Claims~\ref{distribution of vertices of triangles} and \ref{|A|K3}.
\end{proof}

By Lemma~\ref{extremalstructure}, $d_{G}(u)=n-1$ for each $u\in A.$  According to the proof of Lemma~\ref{maximum degree is n-1}, each vertex in $A$ satisfies equality~(\ref{equality}). Then we have the following corollary.
\begin{corollary}\label{the number of independent edges in B}
$G[N_{G}(u)]$ contains at least
$\left\lfloor\frac{\sqrt{(k-1)n}}{2}\right\rfloor$ independent edges for each $u\in A.$
\end{corollary}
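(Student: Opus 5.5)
The corollary follows by re-running the argument of Claim~\ref{independent edges} with $u$ in place of $z$. The key observation is that Claim~\ref{independent edges} does not really use the maximality property (\ref{equality}) of $z$. It only uses a vertex $u$ for which the Merris-type quantity
\[
d_G(u)+\frac{1}{d_G(u)}\sum_{v\in N_G(u)}d_G(v)
\]
is at least $q$. For $z$ this comes from Theorem~\ref{the upper bound of q(G)}. For $u\in A$ it must be verified separately. The plan is as follows.

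\textbf{Step 1 (lower bound on the quantity at $u$).} Fix $u\in A$. By Lemma~\ref{extremalstructure}, $d_G(u)=n-1$, so $N_G(u)=V(G)\setminus\{u\}$. Then
\[
d_G(u)+\frac{1}{d_G(u)}\sum_{v\in N_G(u)}d_G(v)=n-1+\frac{2e(G)-(n-1)}{n-1}=n-2+\frac{2e(G)}{n-1}.
\]
On the other hand, for every vertex $w$, the quantity at $w$ is at most $n+\frac{2e(N_G(w))}{d_G(w)}$, by the same computation as in Case~1 of Claim~\ref{independent edges}. Comparing, we expect the $(n-1)$-degree vertices to attain the maximum in (\ref{equality}). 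More precisely, the neighbour sum of a vertex of full degree is the sum of all other degrees, which is at least the neighbour sum of any $w$ plus the degrees of its non-neighbours. A direct comparison should therefore show that the quantity at $u$ is at least the quantity at $z$. Since Claim~\ref{degree of n-1} gives $d_G(z)=n-1$, the two quantities are in fact equal: both are $n-2+\frac{2e(G)}{n-1}$. This is the remark in the text that each vertex of $A$ satisfies (\ref{equality}). Consequently, by Theorem~\ref{the upper bound of q(G)} and (\ref{lower bound of q}), the quantity at $u$ is at least $q>n+t$, where $t=\sqrt{(k-1)n}-2$.

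\textbf{Step 2 (matching argument).} Set $m=\lfloor (t+2)/2\rfloor$ and suppose, for contradiction, that $G[N_G(u)]$ is $mK_2$-free. Since $d_G(u)=n-1\ge \frac{5m-2}{2}$ for large $n$, we are in Case~2, and Lemma~\ref{dege number of disjoint matchings} gives
\[
e(N_G(u))\le (m-1)(n-1)-\frac{m(m-1)}{2}.
\]
Then
\[
n+t<q\le n+\frac{2e(N_G(u))}{n-1}\le n+2(m-1)-\frac{m(m-1)}{n-1}<n+t,
\]
which is a contradiction. Hence $G[N_G(u)]$ contains $m\ge\lfloor\sqrt{(k-1)n}/2\rfloor$ independent edges.

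The main obstacle is Step~1, namely justifying that a vertex of degree $n-1$ realizes the maximum in (\ref{equality}), or at least that its quantity exceeds $q$. The equality $d_G(z)=d_G(u)=n-1$ handles this cleanly. If that identification were unavailable, the fallback is to bound the quantity at $u$ from below directly via $2e(G)/(n-1)$. Here one uses $q\le \max_w\{\cdots\}$ and the fact that the maximizer $z$ has degree $n-1$, so that $\max_w\{\cdots\}=n-2+\frac{2e(G)}{n-1}$, which is exactly the quantity at $u$.
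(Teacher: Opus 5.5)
Your proposal is correct and follows the paper's own route. By Claim~\ref{degree of n-1} the maximizer $z$ of (\ref{equality}) has $d_G(z)=n-1$, and every $u\in A$ has degree $n-1$ by Lemma~\ref{extremalstructure}, so both have Merris quantity $n-2+\frac{2e(G)}{n-1}$ and the Case~2 argument of Claim~\ref{independent edges} applies verbatim with $u$ in place of $z$; your heuristic ``direct comparison'' remark in Step~1 is not needed (and would not by itself be a proof), since this equality already settles the point.
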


\begin{lemma}\label{degreecondition}
 For each  $u \in B$,  $d_{G}(u) \leq n - 1-\binom{72k^2-9k-2}{3k-1}$.
\end{lemma}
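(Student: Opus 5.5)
The plan is to use the Perron vector rather than any triangle-packing argument. Write $N=\binom{72k^2-9k-2}{3k-1}$; all we need is that $N$ is a constant depending only on $k$. We argue by contradiction. Suppose some $u\in B$ has $d_G(u)>n-1-N$, so $u$ has fewer than $N$ non-neighbours other than itself. We will show $x_u>\frac34$, i.e. $u\in A$, which is impossible since $A\cap B=\emptyset$ by construction ($B$ and $A$ are disjoint because $A$-vertices are not adjacent to themselves, and $C=V(G)\setminus(A\cup B)$).

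\textbf{Step 1 (reference vertex).} Take the vertex $u^*$ with $x_{u^*}=1$. By Lemma~\ref{maximum component implies maximum degree} together with Lemma~\ref{maximum degree is n-1}, $d_G(u^*)=n-1$, and $u^*\in A$. Let $S=\sum_{v\in V(G)}x_v$. The eigen-equation at $u^*$ reads
\[
q=(n-1)+S-1 .
\]

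\textbf{Step 2 (eigen-equation at $u$).} At $u$ we have
\[
q x_u=d_G(u)x_u+\sum_{v\in N_G(u)}x_v\ \ge\ (n-1-N)x_u+\bigl(S-x_u-N\bigr),
\]
because at most $N$ vertices besides $u$ are missing from $N_G(u)$ and every entry is at most $1$.

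\textbf{Step 3 (compare).} Substitute $S=q-n+2$ from Step~1 into Step~2 and rearrange:
\[
(q-n+2)(1-x_u)\le N(1+x_u)\le 2N .
\]
By (\ref{upper and lower bounds of q}), $q-n+2>\sqrt{(k-1)n}$. Hence
\[
1-x_u<\frac{2N}{\sqrt{(k-1)n}},
\]
which is less than $\frac14$ for $n$ sufficiently large in terms of $k$. So $x_u>\frac34$, giving $u\in A$, the desired contradiction.

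The only delicate point is the bookkeeping in Step~2. The sum over $N_G(u)$ equals $S$ minus $x_u$ minus the entries of the non-neighbours, and each of those entries is bounded by $1$; this is what keeps the error term a constant. After that, the conclusion rests solely on $q-n$ tending to infinity (of order $\sqrt n$), guaranteed by Lemma~\ref{lower bound}. No Ramsey or triangle-packing argument seems necessary. If one preferred a combinatorial route using the specific binomial (which is exactly the bound of Lemma~\ref{Ramsey theorem} for $R(3k,72k^2-12k)$), the main obstacle would be controlling $|A|$. The spectral comparison sidesteps that issue entirely.
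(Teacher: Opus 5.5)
Your proof is correct and follows essentially the same route as the paper. Both compare the eigen-equation at $u$ with the one at the vertex $u^*$ (entry $1$, degree $n-1$), bound the contribution of the constantly many non-neighbours of $u$ by $1$ each, and use $q-n+2>\sqrt{(k-1)n}$ to force $x_u>\frac{3}{4}$, contradicting $u\in B$. The only cosmetic difference is that you phrase the comparison through $S=\sum_v x_v=q-n+2$, which lets you skip the paper's preliminary step $d_G(u)\le n-2$.
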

\begin{proof}
Let $c=\binom{72k^2-9k-2}{3k-1}+1.$
Since $x_{u^*} = 1$, by Lemma~\ref{maximum component implies maximum degree}, $d_{G}(u^*) =n-1.$
By $Q(G)\mathbf{x} = q\mathbf{x}$, we have
$$
q x_{u*} = (n-1) x_{u*}+ \sum_{v \in N_{G}(u^*)} x_v, ~~~~~~~
q x_u = d_{G}(u) x_u + \sum_{v \in N_{G}(u)} x_v \quad \text{for}~  u \in B.
$$
For any  $u\in B,$  $d_{G}(u)\leq n-2$ by Lemma~\ref{maximum component implies maximum degree}.
Thus for any  $u\in B,$
\begin{align*}
x_u &= 1 - \frac{n-2 - d_{G}(u) + \sum_{v \notin N_{G}(u)} x_v}{q - d_{G}(u)} \geq 1 - \frac{(n-2 - d_{G}(u)) + (n- d_{G}(u))}{q - n + 2}.
\end{align*}
If $d_{G}(u) > n - c$, then
\begin{align*}
x_u &> 1 - \frac{(n-2 - n + c) + (n - n + c)}{q - n + 2} = 1 - \frac{2c - 2}{q - n + 2} > \frac{3}{4},
\end{align*}
where the last inequality holds as $q> n+\sqrt{(k-1)n}-2$ and $n$ is sufficiently large,  a contradiction.
\end{proof}

\begin{lemma}\label{the eaxct size of A}
 $|A|= k-1.$
\end{lemma}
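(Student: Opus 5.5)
We argue by contradiction. Suppose $a:=|A|\le k-2$. By Lemma~\ref{extremalstructure} and Claim~\ref{C is empty}, $V(G)=A\cup B$, every $u\in A$ has $d_G(u)=n-1$, and $G=K_a\vee G[B]$. Since $G$ is extremal and connected, it contains a copy $H$ of $(k-1)K_3$. By Lemma~\ref{extremalstructure}, exactly $a$ triangles of $H$ meet $A$, each in one vertex, and the remaining $k-1-a\ge 1$ triangles of $H$ lie inside $B$. Hence $G[B]$ contains a triangle. Moreover, $G[B]$ is $(k-a)K_3$-free: otherwise we take such triangles in $G[B]$, add $a$ further triangles each made of a vertex of $A$ and an edge of $G[B]$ outside the used vertices (Corollary~\ref{the number of independent edges in B} supplies $\Omega(\sqrt n)$ such edges), and obtain a $kK_3$.

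The first goal is a local structure around every vertex of $B$. Put $c=\binom{72k^2-9k-2}{3k-1}+1$. By Lemma~\ref{degreecondition}, every $v\in B$ has a non-neighbourhood $M_v$ with $|M_v|\ge c-1$ inside $B$. Lemma~\ref{Ramsey theorem} with parameters $(3k,\,72k^2-12k)$ gives $R(3k,72k^2-12k)\le\binom{72k^2-9k-2}{3k-1}$. So $G[M_v]$ contains either $K_{3k}$ or an independent set of size $72k^2-12k$. A $K_{3k}$ contains $kK_3$, so $M_v$ always contains a large independent set $I_v$ of size $72k^2-12k$. These many bounded-size independent sets are what we exploit.

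The main step is to turn this into a contradiction. The construction is as follows. Pick a vertex $w$ of a triangle $T\subseteq B$ of $H$ for which $x_w$ is maximal among the vertices of $H$ lying in $B$. Form $G'$ by joining $w$ to all of $V(G)\setminus\{w\}$, so that $G'=K_{a+1}\vee (G[B]-w)$. Then check that $G'$ is still $kK_3$-free. For this we use that $G[B]-w$ must be $(k-a-1)K_3$-free, and then apply Lemma~\ref{join}. To get the freeness we suppose $G[B]-w$ contains $(k-a-1)$ disjoint triangles. Taking the $a$ triangles through $A$ together with $w$ and the sets $I_v$, we exchange vertices until we produce $k$ disjoint triangles in $G$. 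The independent sets $I_v$, of size about $72k^2$, act as a reservoir that lets us reroute the triangles through $A$ while avoiding at most $3k$ used vertices. Once $G'$ is known to be $kK_3$-free, Lemma~\ref{subgraph} gives $q(G')>q$, since $w$ has non-neighbours by Lemma~\ref{degreecondition}. This contradicts extremality, so $|A|\ge k-1$, and with Lemma~\ref{the upper bound of size A} we get $|A|=k-1$.

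I expect the exchange argument showing $G[B]-w$ is $(k-a-1)K_3$-free to be the main obstacle. If it fails, the fallback is spectral. Use Lemma~\ref{symmetrize and adding edges} with $A$ replaced by a large independent subset of $B$ obtained from the Ramsey step. This bounds $q\le q(K_{a+3(k-a-1)}\vee K_{s,n-s-\cdots})$ with the small side $s$ at most $\epsilon' n$. Then Lemma~\ref{the upper bound of q} gives $q<n+\sqrt n/2$, which contradicts \eqref{upper and lower bounds of q}.
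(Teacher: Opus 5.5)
\textbf{Verdict: there is a genuine gap.} Your main step, that $G[B]-w$ is $(k-a-1)K_3$-free, does not hold, and an exchange argument of the kind you describe cannot prove it. The fallback does not close the gap either.

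\textbf{The main step.} Since $G=K_a\vee G[B]$ and $G[B]-w$ has $\Omega(\sqrt n)$ independent edges, the graph $G'=K_{a+1}\vee(G[B]-w)$ is $kK_3$-free exactly when every copy of $(k-a-1)K_3$ in $G[B]$ passes through $w$. A copy of $kK_3$ in $G$ has at most $a$ triangles meeting $A$. So your exchange would have to turn $k-a-1$ disjoint triangles of $G[B]-w$, together with the triangle $T\ni w$, into $k-a$ disjoint triangles inside $G[B]$. Rerouting the triangles through $A$ is never the bottleneck; Corollary~\ref{the number of independent edges in B} already handles that. The sets $I_v$ are independent, so they contain no edges and cannot supply the missing triangle.

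The transversal property itself fails for graphs with all the combinatorial features you use. Take $k=3$ and $K_1\vee(K_4\cup K_{p,p})$. It is $3K_3$-free and has $|A|=1=k-2$, since the $K_{p,p}$-entries are at most $1/4$. Its $B$-vertices have huge non-neighbourhoods containing large independent sets, and $G[B]$ has $p$ independent edges. Yet for every vertex $w$ of the $K_4$, $G[B]-w$ still contains a triangle, and $K_2\vee(K_3\cup K_{p,p})\supseteq 3K_3$. The $K_4$-vertices are all symmetric, so choosing $w$ with maximal $x_w$ does not help. Excluding such configurations requires using extremality quantitatively, which your main step never does.

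\textbf{The fallback.} Lemma~\ref{symmetrize and adding edges} yields $K_{|D|}\vee K_{s,t}$ only when the neighbourhood of the chosen vertex $u$ outside $D$ is independent. Lemma~\ref{the upper bound of q} then also needs $s\le n/(16|D|)$. In the paper the small side is always the non-neighbourhood of a vertex with entry $>3/4$, whose size is controlled by Lemma~\ref{larger component derive larger degree}. Your $u$ lies in $B$, so $x_u\le 3/4$ and its non-neighbourhood may have size about $n/2$. With $|D|=a+3(k-1-a)=3k-3-2a\ge k+1$ and balanced sides, $q(K_{|D|}\vee K_{s,t})\approx n+\sqrt{(3k-3-2a)n}$, which exceeds $n+\sqrt{(k-1)n}$. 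So nothing is contradicted.

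\textbf{What is missing.} The paper analyses the maximum-entry vertex $v^*$ of $B$ according to the matching number $s$ of $G[N_B(v^*)]$:
\begin{itemize}
\item If $s\ge 3k$, then $d_G(v^*)=n-1$, so $v^*\in A$ by Lemma~\ref{maximum component implies maximum degree}, a contradiction.
\item If $s=0$, symmetrizing $B_2$ to $v^*$ gives a proper subgraph of $K_{k-1}\vee K_{|B_1|,n-k+1-|B_1|}$, which Lemma~\ref{JinLe-2} shows is beaten by $K_{k-1}\vee T_2(n-k+1)$. This is where $|A|\le k-2$ is actually used.
\item If $1\le s\le k-1-|A|$, symmetrization is followed by an edge switch at a matching edge. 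It is justified by a Rayleigh-quotient estimate using Lemma~\ref{degreecondition}.
\item If $k-|A|\le s<3k$, the $O(k^2)$ edges inside $\{v^*\}\cup V_1$ are deleted so that every $(k-1)K_3$ passes through $v^*$. Then $v^*$ is joined to a Ramsey independent set $I$ of its non-neighbours. The gain $|I|x_{v^*}^2$ beats the loss because $x_{v^*}$ is maximal in $B$.
\end{itemize}
That last case shows the real role of the Ramsey independent set: it supplies new neighbours for $v^*$, not a reservoir for rerouting triangles.
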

\begin{proof}
By Lemma 3.3,  $|A| \leq k-1$.  Suppose for a contradiction that $|A| \leq k-2$. Let $v^* \in B$ with
$
x_{v^*} = \max_{w \in B} \{x_w\}
$
and  $B_{1}= N_B(v^*)$. Let $\nu(B_{1})$ be the matching number of $G[B_{1}]$ and  $\{u_1u_1', \dots, u_su_s'\}$ be a maximum matching of $G[B_{1}]$, where $s = \nu(B_{1})$. Let $V_1 = \{u_1,u_1',\dots,u_s,u_s'\}$ and $B_{2}= B \setminus (B_{1} \cup \{v^*\}).$

\textbf{Case 1.} If $s \geq 3k$, then $d_G(v^*) = n-1$. Otherwise, combining with $C=\emptyset$ by Lemma~\ref{extremalstructure},
there exists a vertex $w \in B_{2}$ satisfying that $w$ is not adjacent to $v^{*}$. Let $G'$ be a graph obtained from $G$ by adding  the edge $v^*w$. Then $G'$ contains a copy of $kK_3$ with one triangle containing the edge $v^*w$, which implies that $G$ contains  a copy of $(k-1)K_3$ without containing $v^*$, denoted by $H.$ Since $s \geq 3k$, there exist two adjacent vertices $u_{i},u_{i}'\notin V(H).$ 
Thus, $H\cup G[\{v^*,u_{i},u_{i}'\}]$ is a copy of $kK_3$ in $G$, a contradiction. By Lemma~\ref{maximum component implies maximum degree},  $v^*\in A,$ a contradiction.

\textbf{Case 2.} If $s = 0$, then $B_{1}$ is an independent set. Let $G'$ be a graph obtained from $G$ by symmetrizing   $v$ to $v^*$ for each $v \in B_2 $. Clearly, $G'=K_{|A|} \vee K_{|B_{1}|, n-|A|-|B_{1}|}$.
Then $G'$ is $kK_3$-free as $G'$  has at most $|A|$ independent triangles.
Thus,
\[
q(G) \leq q(G') < q(K_{k-1} \vee K_{|B_{1}|, n-k+1-|B_{1}|}) \leq q(K_{k-1} \vee T_2(n-k+1)),
\]
where the first and last inequalities follow from Lemmas~\ref{symmetrize and adding edges} and \ref{JinLe-2}, respectively, and the second inequality holds as $G'$ is a proper subgraph of $K_{k-1} \vee K_{|B_{1}|, n-k+1-|B_{1}|},$ 
 a contradiction.

\textbf{Case 3.} If $1 \leq s \leq k-1-|A|$, then let $G'$ be a graph obtained from $G$ by symmetrizing   $v$ to $v^*$ for each $v \in B_2 $. Clearly, $G'=K_{|A|} \vee \overline{K}_{n-|A|-|B_{1}|} \vee G[B_{1}].$ Then $G'$ is $kK_3$-free. Otherwise,  there exists a copy of $(k-|A|)K_{3}$ in $G'[B]$ as there are at most $|A|$ disjoint copies of $K_{3}$ containing a vertex of $A.$ According to the definition of $G'$, 
there are $(k-|A|)$ independent edges in  $G[B_{1}]$, a contradiction to $s \leq k-1-|A|.$ By Lemma~\ref{symmetrize and adding edges} and $G$ is extremal,
\[
q(G) \leq q(G') \leq q(G),
\]
which implies that $G = G'=K_{|A|} \vee \overline{K}_{n-|A|-|B_{1}|} \vee G[B_{1}].$ Since $G$ contains a copy of $(k-1)K_{3},$ it follows from Lemma~\ref{extremalstructure} that $s= k-1-|A|$.

Let   $w^* \in V_1$ with
$
x_{w^*} = \max_{w \in V_1} \{x_w\}.
$
Without loss of generality, assume $w^* = u_1$.
Let $G''$ be a graph obtained from $G$ by  deleting  all edges incident to $u_1,$  $u_1'$ from $G[B_{1}]$, respectively, and adding all possible edges between $u_1$ and all vertices of $B_{1}\setminus V_1$.  Since $G = K_{|A|} \vee \overline{K}_{n-|A|-|B_{1}|} \vee G[B_{1}]$,
\[
q x_{u_1} = d_{G}(u_1)x_{u_1} + \sum_{w \in N_{G}(u_1)} x_w, \quad q x_v = d_{G}(v)x_v + \sum_{w \in N_{G}(v)} x_w \text{ for } v \in B_{1}\setminus V_1.
\]
Then
\[
q x_{u_1} \geq (n-|B_{1}|)x_{u_1} + \sum_{w \in N_{G}(u_1)} x_w > (n-|B_{1}|)x_{u_1} + 1,
\]
which implies that
\[
x_{u_1} > \frac{1}{q - n + |B_{1}|}.
\]
Moreover, for $v \in B_{1}\setminus V_1,$
\[
\begin{aligned}
q(x_{u_1} - x_v) &= d_{G}(u_1)x_{u_1} - d_{G}(v)x_v + \sum_{w \in N_{G}(u_1)} x_w - \sum_{w \in N_{G}(v)} x_w \\
&\geq (n-|B_{1}|)x_{u_1} - (n-|B_{1}|+2s)x_v  - 2s x_{u_1}
\geq (n-|B_{1}|)(x_{u_1} - x_v) - 4s,
\end{aligned}
\]
which implies
\[
x_v \leq x_{u_1} + \frac{4s}{q-n+|B_{1}|} < (4s+1)x_{u_1}.
\]

Since $u_1'$ is an isolated vertex in $G''[B_{1}]$ and   $G''[B_{1}] - \{u_1, u_1'\} = G[B_{1}] - \{u_1, u_1'\},$  $\nu(G''[B_{1}])=s=k-1-|A|$. So $G''$ is $kK_3$-free.
Since $u_1,u_1' \in B$, by Lemma~\ref{degreecondition}, we have
\[
d_G(u_1) \leq n -1- {72k^2 - 9k - 2\choose 3k-1},~ d_G(u_1') \leq n -1- {72k^2 - 9k - 2\choose 3k-1}.
\]
Thus there are at least
\[
{72k^2 - 9k - 2\choose 3k-1} - 2(k-1-|A|)
\]
vertices of $B_{1}\setminus V_1$ not adjacent to $u_1,u_1',$ respectively.
Since $\{u_1u_1',\dots,u_su_s'\}$ is a maximum matching of $G[B_{1}]$,
we have $|N_{B_{1}\setminus V_1}(u_1)| \leq 1$ or $|N_{B_{1}\setminus V_1}(u_1')| \leq 1$.
 If $|N_{B_{1}\setminus V_1}(u_1)| \leq 1,$ then
\begin{eqnarray*}
&&\mathbf{x}^{\mathrm{T}}(Q(G'')-Q(G))\mathbf{x}\\
&=&\left(\sum_{\substack{ v\in B_{1}\setminus V_{1}\\ vu_{1}'\in E(G)}} (x_{v}+x_{u_{1}})^2+\sum_{\substack{ v\in B_{1}\setminus V_{1}\\ vu_{1}'\notin E(G)}} (x_{v}+x_{u_{1}})^2-\sum_{\substack{ v\in B_{1}\setminus V_{1}\\ vu_{1}\in E(G)}} (x_{v}+x_{u_{1}})^2\right)-\sum_{\substack{ v\in B_{1}\setminus V_{1}\\ vu_{1}'\in E(G)}} (x_{v}+x_{u_{1}'})^2\\
&&-\sum_{\substack{ v\in  V_{1}\\ vu_{1}\in E(G)}} (x_{v}+x_{u_{1}})^2-\sum_{\substack{ v\in  V_{1}\setminus\{
u_{1}\}\\ vu_{1}'\in E(G)}} (x_{v}+x_{u_{1}'})^2\\
&\geq &
\sum_{\substack{ v\in B_{1}\setminus V_{1}\\ vu_{1}'\notin E(G)}} (x_{v}+x_{u_{1}})^2-\sum_{\substack{ v\in B_{1}\setminus V_{1}\\ vu_{1}\in E(G)}} (x_{v}+x_{u_{1}})^2-\sum_{\substack{ v\in  V_{1}\\ vu_{1}\in E(G)}} (x_{v}+x_{u_{1}})^2-\sum_{\substack{ v\in  V_{1}\setminus\{
u_{1}\}\\ vu_{1}'\in E(G)}} (x_{v}+x_{u_{1}'})^2\\
&> &\left(\binom{72k^2-9k-2}{3k-1} - 2(k-1-|A|)\right)x_{u_1}^2 - (4s+2)^2x_{u_1}^2 - 2\cdot2s\cdot4 x_{u_1}^2.
\end{eqnarray*}
If $|N_{B_{1}\setminus V_1}(u_1')| \leq 1,$ then
\begin{eqnarray*}
&&\mathbf{x}^{\mathrm{T}}(Q(G'')-Q(G))\mathbf{x}\\
&=&\sum_{\substack{ v\in B_{1}\setminus V_{1}\\ vu_{1}\notin E(G)}} (x_{v}+x_{u_{1}})^2-\sum_{\substack{ v\in B_{1}\setminus V_{1}\\ vu_{1}'\in E(G)}} (x_{v}+x_{u_{1}'})^2-\sum_{\substack{ v\in  V_{1}\\ vu_{1}\in E(G)}} (x_{v}+x_{u_{1}})^2-\sum_{\substack{ v\in  V_{1}\setminus\{
u_{1}\}\\ vu_{1}'\in E(G)}} (x_{v}+x_{u_{1}'})^2\\
&> &
\left(\binom{72k^2-9k-2}{3k-1} - 2(k-1-|A|)\right)x_{u_1}^2 - (4s+2)^2x_{u_1}^2 - 2\cdot2s\cdot4 x_{u_1}^2.
\end{eqnarray*}
In both situations, since $|A| \geq 1$ and $s\leq k-1-|A|<k-1,$ we have
\[
\begin{aligned}
0\geq (q(G'')-q)\mathbf{x}^{\mathrm{T}}\mathbf{x} &\geq  \mathbf{x}^{\mathrm{T}}(Q(G'')-Q(G))\mathbf{x}
 >\left(\binom{72k^2-9k-2}{3k-1} - 2k - 16k^2 - 16k\right)x_{u_1}^2 > 0,
\end{aligned}
\]
a contradiction. 

\textbf{Case 4.}  $k-|A| \leq s < 3k$.
If every copy of $(k-1)K_3$ in $G$ contains $v^*$,
then $d_{G}(v^*) = n-1.$
          Otherwise,
          combining with $C=\emptyset$ by Lemma~\ref{extremalstructure},
there exists a vertex $w \in B_{2}$ satisfying that $w$ is not adjacent to $v^{*}$.
Let $G'$ be a graph obtained from $G$ by adding  the edge $v^*w$. Then $G'$ contains a copy of $kK_3$ with one triangle containing the edge $v^*w$, which implies that $G$ contains  a copy of $(k-1)K_3$ without containing $v^*$,  contradicting the assumption. Thus $d_{G}(v^*) = n-1.$ However, by Lemma~\ref{maximum component implies maximum degree},  $v^*\in A,$ a contradiction.
        Thus there exists a copy of $(k-1)K_3$ in $G$  without containing  $v^*$.
        Firstly, we prove the following claim.
        \begin{claim}\label{two different independent edges}
      If $H$ is  a copy of $(k-1)K_{3}$ without containing  $v^*,$ then
        there is one triangle in  $H$ containing two vertices incident with two distinct  edges in $\{u_1u_1',\dots,u_su_s'\}$.
        \end{claim}
 \begin{proofclaim}~{\bf \ref{two different independent edges}.}
 Firstly,  we prove that $ \{u_i,u_i'\}\cap V(H)\neq \emptyset$ for each $1\leq i\leq s.$ Otherwise, without loss of generality, suppose that  $\{u_1,u_1'\}\cap V(H)= \emptyset.$ Since $v^{*}\notin V(H),$ $H\cup G[\{v^{*},u_1,u_1'\}]$ is a copy of $kK_{3}$ in $G,$ a contradiction.
 Hence, there exists a triangle of $H$ that intersects the set  $\{u_i,u_i'\}$ nontrivially for each edge  $ u_iu_i'\in \{u_1u_1',\dots,u_su_s'\}.$
   Next we prove that  for each edge  $ u_iu_i'\in \{u_1u_1',\dots,u_su_s'\},$
   among the triangles of $H$ that meet $\{u_i,u_i'\}$, at least one has all its vertices in $B$.



Fix an arbitrary index $i\in\{1,\ldots,s\}$.
Suppose, for a contradiction, that every triangle of $H$
meeting $\{u_i,u_i'\}$ also meets $A$.
   Denote  the triangles of $H$ satisfying $\{u_i,u_i'\}\cap V(\Delta_{i})\neq \emptyset$ by $\Delta_{i}^{1},\cdots ,\Delta_{i}^{t_{i}},$ where $1\leq t_{i} \leq 2.$ By the assumption,  there exists $w_{i}^{j}\in V(\Delta_{i}^{j})\cap A$ for $1\leq j\leq t_{i}.$
  Obviously, $w_{i}^{j}\notin \{u_{i},u_{i}'\}\subseteq B$ for $1\leq j\leq t_{i}.$ By Corollary~\ref{the number of independent edges in B}, for $1\leq j\leq t_{i},$  $G[N_{G}(w_{i}^{j})]$ has at least $3k+3$ independent edges, one of which, denoted by   $p_{i}^{j}q_{i}^{j},$ satisfies $\{p_{i}^{j},q_{i}^{j}\}\subseteq B,$  $\{p_{i}^{j},q_{i}^{j}\}\cap (V(H)\cup \{u_{i},u_{i}',v^{*}\})=\emptyset$ and the sets $\{p_j^i,q_j^i\}$, $1\le j\le t_i$, are pairwise disjoint. Thus, $(H-  \cup_{1\leq j\leq t_{i}}V(\Delta_{i}^{j}))\cup (\cup_{1\leq j\leq t_{i}} G[\{w_{i}^{j},p_{i}^{j},q_{i}^{j}\}])$ is a copy of $(k-1)K_{3}$ in $G$ without containing any vertex in $\{v^{*},u_{i},u_{i}'\},$  denoted by $H'.$ Since $v^{*}\notin V(H),$  $H'\cup G[\{v^{*},u_{i},u_{i}'\}]$ is a copy of $kK_{3}$ in $G,$ a contradiction.

  Now we prove the claim by way of contradiction.
  Assume  that for each triangle  $\Delta $ in $H,$  there exists some $1\leq i\leq s$ such that $$V_{1}\cap V(\Delta) =\{u_1,u_1',\dots,u_s,u_s'\} \cap V(\Delta) \subseteq \{u_{i},u_{i}'\}.$$   
  Recall that for each edge  $ u_iu_i'\in \{u_1u_1',\dots,u_su_s'\},$  there is one triangle $\Delta_{i}$ of $H$ satisfying that $ \{u_i,u_i'\}\cap V(\Delta_{i})\neq \emptyset$ and
   $V(\Delta_{i})\subseteq B.$
   There is a one-to-one correspondence between the set $\{u_1u_1',\dots,u_su_s'\}$ and $s$ distinct triangles  in $H$  and also in $G[B].$
   By Lemma~\ref{extremalstructure},  there are exactly $|A|$ disjoint copies of $K_{3}$ in  $H,$ each of which contains exactly one vertex in $A.$  Thus, there are $s+|A|\geq k-|A|+|A|=k$ disjoint copies of $K_{3}$ in $G,$  a contradiction.
 \end{proofclaim}

Let $G''$ be a graph obtained from $G$ by deleting  all edges in $G[\{v^*\} \cup V_1]$.
Then  every copy of  $(k-1)K_3$ in $G''$ contains $v^{*}$. Otherwise, assume that $G''$ contains a copy of $(k-1)K_3$ without containing $v^*,$ denoted by $H.$  By Claim~\ref{two different independent edges}, there is one triangle in  $H$ containing one edge in $G[V_1]$.  However,  $G''$ contains no edges in $G[V_1]$, a contradiction.

By Lemma~\ref{degreecondition},
    \[
    |B_{2}| \geq {72k^2 - 9k - 2\choose 3k-1}.
    \]
    Since $G$ is $kK_3$-free, the clique number of $G[B_{2}]$ is less than $3k$. By Lemma 2.1, the independence number of $G[B_{2}]$ satisfies
    \[
    \alpha(G[B_{2}]) \geq 4\binom{6k}{2}.
    \]
    Let $I$ be a maximum independent set of $G[B_{2}]$. So $|I| \geq 4\binom{6k}{2}$. Let $G'''$ be a graph obtained from $G''$ by adding all possible edges between $v^*$ and $I$. Then $G'''$ is $kK_3$-free. Otherwise, $G'''$ contains a copy of $kK_3$, denoted by $H'$, which must contain a triangle with vertices $v^*,w,w'$, where $w \in N_B(v^*)\cup A$ and $w' \in I$. Thus, $H' - \{v^*,w,w'\}$ is   a copy of $(k-1)K_3$ in $G''$, which implies that $G''$ contains a copy of $(k-1)K_3$ not containing $v^*,$ a contradiction to the fact that each copy of  $(k-1)K_3$ in $G''$ containing $v^{*}$.
    However,
    \[
    0\geq (q(G''')-q)\mathbf{x}^{\mathrm{T}}\mathbf{x} \geq\mathbf{x}^{\mathrm{T}}(Q(G''')-Q(G))\mathbf{x} \geq |I|x_{v^*}^2 - 4\binom{6k-1}{2}x_{v^*}^2 > 0,
    \]
    a contradiction.

So $|A| = k-1$.
\end{proof}

\begin{lemma}\label{B is a complete biaprtite graph}
$G[B]$ is a complete bipartite graph.
\end{lemma}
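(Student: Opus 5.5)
The plan is to reduce $G$ to the form $K_{k-1}\vee G[B]$, show that $G[B]$ is triangle-free, and then apply one symmetrization step using Lemma~\ref{symmetrize and adding edges}. The extremality of $G$ will force equality in that lemma, and its equality case gives exactly the complete bipartite structure.

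\emph{Step 1 (global structure).} By Lemma~\ref{the eaxct size of A}, $|A|=k-1$. By Lemma~\ref{extremalstructure} every $u\in A$ has $d_G(u)=n-1$, and by Claim~\ref{C is empty} we have $C=\emptyset$. Hence $V(G)=A\cup B$, $G[A]=K_{k-1}$, and $A$ is completely joined to $B$. So $G=K_{k-1}\vee G[B]$ with $|B|=n-k+1$.

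\emph{Step 2 ($G[B]$ is triangle-free).} Suppose $T$ is a triangle in $G[B]$, and write $A=\{a_1,\dots,a_{k-1}\}$. By Corollary~\ref{the number of independent edges in B}, $G[N_G(a_1)]$ has at least $\lfloor\sqrt{(k-1)n}/2\rfloor$ independent edges. At most $k-1$ of these meet $A$ and at most $3+2(k-1)$ meet $V(T)$ or the edges already chosen. Since $n$ is large, we can therefore greedily choose $k-1$ pairwise disjoint edges $e_1,\dots,e_{k-1}$ of $G[B]$ that avoid $V(T)$; any one of the many available matchings in $G[N_G(a_1)]$ works, because $N_G(a_1)\supseteq B$. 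Each $a_i$ is adjacent to all of $B$, so the triangles $a_i\cup e_i$ for $1\le i\le k-1$, together with $T$, form a $kK_3$. This is a contradiction, so $G[B]$ is triangle-free.

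\emph{Step 3 (symmetrization).} Let $v^*\in B$ have $x_{v^*}=\max_{w\in B}x_w$, and put $B_1=N_B(v^*)$. By Step 2, $B_1$ is an independent set. Apply Lemma~\ref{symmetrize and adding edges} with the following choices:
\begin{itemize}
\item the role of ``$A$'' is played by the singleton $\{v^*\}$, which is trivially independent;
\item the role of ``$B$'' is played by $B_1\subseteq N_G(v^*)$;
\item the role of ``$C$'' is played by $B\setminus(B_1\cup\{v^*\})$;
\item the role of ``$D$'' is played by our set $A$.
\end{itemize}
With these choices $u=v^*$ maximizes $x$ over $\{v^*\}\cup C$, and $C_0=C\cap N_G(v^*)=\emptyset$, so $C_1=C$. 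The lemma gives
\[
q(G)\le q\bigl(K_{k-1}\vee \overline{K}_{|B|-|B_1|}\vee G[B_1]\bigr)=q\bigl(K_{k-1}\vee K_{|B|-|B_1|,|B_1|}\bigr),
\]
where the equality uses that $G[B_1]$ is edgeless. The graph on the right is $K_{k-1}\vee H$ with $H$ bipartite, hence $1K_3$-free, so by Lemma~\ref{join} (with $t=k-1$) it is $kK_3$-free. Since $G$ is extremal, equality must hold. The equality clause of Lemma~\ref{symmetrize and adding edges} then gives $G=K_{k-1}\vee K_{|B|-|B_1|,|B_1|}$, that is, $G[B]$ is complete bipartite with parts $B_1$ and $B\setminus B_1$.

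The only delicate point is Step 2: one must check that the disjoint edges supplied by Corollary~\ref{the number of independent edges in B} can be taken inside $B$ and avoiding $T$. This is a simple counting argument, since the number of available independent edges grows like $\sqrt n$ while only $O(k)$ of them are excluded. After that, Step 3 is a direct application of the lemma's equality case.
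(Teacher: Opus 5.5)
Your proof is correct and follows the paper's argument essentially step for step. You show triangle-freeness of $G[B]$ using the many independent edges in $G[N_G(u)]$ for $u\in A$ (Corollary~\ref{the number of independent edges in B}), then symmetrize $B\setminus(N_B(v^*)\cup\{v^*\})$ to the maximizer $v^*$ and use the equality case of Lemma~\ref{symmetrize and adding edges} together with Lemma~\ref{join}. The paper additionally checks explicitly that $N_B(v^*)\neq\emptyset$, so that the bipartite graph is non-degenerate; in your write-up this follows at once, since Step~2 already shows that $G[B]$ has edges.
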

\begin{proof}
Firstly we prove that $G[B]$ is  $K_{3}$-free.
Assume for a contradiction that $G[B]$ contains a  triangle, denoted by $\Delta.$  By Corollary~\ref{the number of independent edges in B}, for each $u\in A,$ there are at least $2k$ independent edges in $G[N_{G}(u)]$.
By Lemma~\ref{the eaxct size of A} and $|V(\Delta)|=3$, there are at most $k+1$ independent edges in  $G[N_{G}(u)]$ which contains a vertex in $A\cup V(\Delta).$
Thus, there are at least $k-1$ independent edges in  $G[B\setminus V(\Delta)],$ which implies that there is a copy of $(k-1)K_{3}$ in $G- V(\Delta),$ denoted by $H.$ Hence, $H\cup \Delta $ is a copy of $kK_{3}$ in $G,$ a contradiction.

Next we prove that $G[B]$ is a complete bipartite graph.
Choose $v^{*}\in V(B)$ with
$x_{v^{*}}= \max\{x_{u} : u \in V(B)\}.$
Let $B_{1}=N_{B}(v^{*})$ and $B_{2}=B\setminus (B_{1}\cup \{v^{*}\})$. Then $V(G) = A\cup B_{1}\cup B_{2} \cup \{v^{*}\}$. Clearly, $B_{1}\neq \emptyset$. 
Otherwise, $d_{B}(v^{*})=0.$ Since $x_{u}=1$ for each $u\in A,$ $qx_{v^{*}}=|A|x_{v^{*}}+|A|,$ which implies that $x_{v^{*}}=\frac{|A|}{q-|A|}.$ If $d_{B}(v)>0$ for some $v\in B,$ then
$qx_{v}=(|A|+d_{B}(v))x_{v}+|A|+\sum_{z\in N_{B}(v)}x_{z},$ which implies that $x_{v}>\frac{|A|}{q-|A|}=x_{v^{*}},$ a contradiction.
Thus, $B$ is an independent set, a contradiction to Corollary~\ref{the number of independent edges in B}.
Since $G[B]$ is $K_{3}$-free, $B_{1}$ is an independent set.

By Lemma~\ref{degreecondition}, $B_{2}\neq \emptyset.$
Let $G'$ be a graph obtained from $G$ by symmetrizing all vertices in $B_{2}$ to $v^{*}.$
Clearly, $G'= K_{k-1} \vee K_{|B_{1}|,|B_{2}\cup \{v^{*}\}|}.$
 Since $K_{|B_{1}|,|B_{2}\cup \{v^{*}\}|}$ is $K_{3}$-free,  $G'$ is $kK_{3}$-free by Lemma~\ref{join}.
Furthermore, by $G$ is extremal and  Lemma~\ref{symmetrize and adding edges}, we have $q\geq q(G') \geq  q,$
equality holds if and only if $G = G'$. Hence,  $G[B]$ is a complete bipartite graph.
\end{proof}

According to Lemmas~\ref{extremalstructure}, \ref{the eaxct size of A} and  \ref{B is a complete biaprtite graph}, 
we have
$G=K_{k-1}\vee G[B],$ where $K_{k-1}$ corresponds to the vertices in $A$ and $G[B]$ is a complete bipartite graph.
Since $G$ is extremal, by Lemma~\ref{JinLe-2}, we have
$G = K_{k-1} \vee T_{2}(n-k+1)$.
\section{Conclusions}
Recall that an induced matching of size $k$ in a graph $F$ is a set of $k$ edges such that no two of them intersect in a common vertex or are joined by an edge of $F$; that is, an induced matching is a matching that forms an induced subgraph.

\begin{definition}\label{color-k-critical}
Let $F$ be a graph with $\chi(F)=r+1$ and  $k\geq 1$
be an integer. We call $F$ \emph{color-$k$-critical} if
the following conditions hold:
\begin{enumerate}
    \item[(i)]
    For every $S\subseteq V(F)$ with $|S|=k-1$, we have
    \[
        \chi(F-S)=r+1.
    \]

    \item[(ii)]
    There exists a partition
    \[
        V(F)=U_1\cup\cdots\cup U_r
    \]
    such that $F[U_1]$ consists of $k$ pairwise disjoint
    edges and possibly some isolated vertices, while
    $U_2,\ldots,U_r$ are independent sets.
\end{enumerate}
\end{definition}

 It is worth highlighting that this family includes a wide range of graphs, such as color-critical graphs, the Petersen graph, Kneser graphs $KG(n,2)$, and disjoint unions of cliques.
In 1974, Simonovits \cite{Simonovits1974} determined the unique extremal graph for color-$k$-critical graphs.

\begin{theorem}[Simonovits \cite{Simonovits1974}]
Let $k \ge 1$, $r \ge 2$ and $F$ be a color-$k$-critical graph with $\chi(F) = r + 1$. If $n$ is sufficiently large and $G$ is an $n$-vertex $F$-free graph, then
\[
ex(n,F)= e(K_{k-1}\vee  T_r(n-k+1)),
\]
and the unique edge extremal graph is $K_{k-1}\vee  T_r(n-k+1)$.
\end{theorem}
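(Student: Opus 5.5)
The plan is the classical Simonovits approach: an exact lower-bound construction, then stability, then a cleaning argument that pins down the extremal structure. For the lower bound, I will check that $K_{k-1}\vee T_r(n-k+1)$ is $F$-free. Suppose a copy of $F$ appeared. At most $k-1$ of its vertices lie in the clique part; enlarge this set to some $S\subseteq V(F)$ with $|S|=k-1$. Then $F-S$ embeds in $T_r(n-k+1)$, so $\chi(F-S)\le r$, contradicting condition (i). Hence $ex(n,F)\ge e(K_{k-1}\vee T_r(n-k+1))$.

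For the upper bound, condition (ii) shows that $F$ is a subgraph of $T_r(rt)$ with $k$ disjoint edges added inside one class, for some constant $t$. Hence $\chi(F)=r+1$ gives $\pi(F)=1-\frac1r$, and the Erd\H{o}s--Simonovits stability theorem applies. I will then proceed in the following steps.
\begin{enumerate}
\item[(a)] By progressive induction (deleting vertices of degree below $(1-\frac1r-\delta)n$ and comparing with the formula for $n-1$), reduce to an extremal $G$ with $\delta(G)\ge(1-\frac1r-\delta)n$.
\item[(b)] Take a max-$r$-cut $V_1\cup\cdots\cup V_r$. By stability each $|V_i|=\frac nr+o(n)$, and only $o(n^2)$ cross pairs are missing.
\item[(c)] Let $W$ be the set of vertices having at least $\gamma n$ neighbours in every class. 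If $|W|\ge k$, use $k$ vertices of $W$ together with common neighbourhoods of size $\Omega(n)$ to embed $F$. Each vertex of $W$ absorbs one of the $k$ edges of $F[U_1]$ by playing an endpoint whose partner sits in a class; this gives a contradiction, so $|W|\le k-1$.
\item[(d)] Outside $W$, every vertex has at most $\gamma n$ neighbours in its own class, by maximality of the cut and the choice of $W$. A standard greedy embedding then shows that the graph of internal edges in $V(G)\setminus W$ has fewer than $k-|W|$ pairwise disjoint edges.
\end{enumerate}

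The counting then runs as follows. By Lemma~\ref{dege number of disjoint matchings} (Erd\H{o}s--Gallai), the internal edges outside $W$ number $O(1)$, more precisely at most about $(k-1-|W|)\cdot O(\gamma n)$ through a small cover. Each vertex of $W$ contributes at most $n-1$ edges. Any internal edge forces missing cross edges, because a vertex outside $W$ with internal neighbours has bounded degree into some class, by maximality and the exclusion of $F$. Comparing $e(G)$ with $e(K_{k-1}\vee T_r(n-k+1))$ then yields $|W|=k-1$, no internal edges outside $W$, and all cross edges present. Thus $G\subseteq K_{k-1}\vee K_{n_1,\dots,n_r}$, and balancing the parts (as in Lemma~\ref{JinLe-2}, but edge-wise) gives uniqueness.

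The main obstacle is step (d). Condition (ii) places all $k$ disjoint edges in a single class $U_1$, but in $G$ the disjoint internal edges may be spread over different $V_i$. I would first try to show, using condition (i) and the almost-complete $r$-partite structure with common neighbourhoods of linear size, that $F$ also embeds when its $k$ extra edges are distributed among several classes. The idea is to recolour: pick any $S$ of $k-1$ endpoints, and use $\chi(F-S)=r+1$ together with (ii) to reroute the embedding. If that fails, I would instead bound the deficit edge-by-edge: each internal edge $xy$ outside $W$ forces roughly $\frac nr$ missing cross edges at $x$ or $y$, which already makes the edge count fall short.
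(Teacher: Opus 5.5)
The paper only cites this theorem from Simonovits and gives no proof, so your proposal has to stand on its own. The lower bound and steps (a)--(b) are fine. Step (c), however, contains a genuine gap.

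\textbf{Step (c) fails as stated.} Membership in $W$ gives $\gamma n$ neighbours in each class separately. Nothing forces $k$ vertices of $W$ to have common neighbourhoods of size $\Omega(n)$. Even with $\delta(G)\ge(1-\frac1r-\delta)n$ and a maximum cut, these common neighbourhoods can be empty, and then $F$ need not embed.

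Here is a concrete example with $r=k=2$.
\begin{itemize}
\item Let $F$ consist of two disjoint triangles $a_ib_ic_i$ ($i=1,2$), plus four new vertices joined to $\{a_1,a_2\}$, $\{a_1,b_2\}$, $\{b_1,a_2\}$, $\{b_1,b_2\}$ respectively. Then $F$ satisfies (i) and (ii), with $U_1=\{a_1,b_1,a_2,b_2\}$.
\item Let $G$ be complete bipartite between $X_1\cup Y_1$ and $X_2\cup Y_2$, each set of size about $n/4$, with $|X_1|\le |X_2|$ and $|Y_1|\le|Y_2|$. Add $w_1,w_2$ to the first class with $N(w_1)=X_1\cup X_2$ and $N(w_2)=Y_1\cup Y_2$.
\item The bipartition is a maximum cut, $\delta(G)\approx n/2$, and $w_1,w_2\in W$.
\item Yet $G$ is $F$-free. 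Each $G-w_i$ lies in $K_1\vee K_{a,b}$, so a copy of $F$ must use both $w_1$ and $w_2$. These are non-adjacent with no common neighbour.
\item In $F$, the only pairs $\{p,q\}$ with $F-\{p,q\}$ bipartite are $\{a_1,a_2\},\{a_1,b_2\},\{b_1,a_2\},\{b_1,b_2\}$. Each of these has a common neighbour, so no copy exists.
\end{itemize}
So $|W|\le k-1$ cannot be obtained by embedding alone. You need an argument driven by $e(G)\ge e(K_{k-1}\vee T_r(n-k+1))$ that controls the vertices of large internal degree which are not almost universal, i.e.\ have degree below $(1-\gamma)n$. It should show there are $O(1)$ of them and that their total excess $\sum (d(v)-(1-\frac1r)n)$ is at most about $(k-1)\frac nr$. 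Equality should hold only for $k-1$ almost universal vertices. This is the heart of the proof, and it is missing.

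\textbf{Your two fallbacks for (d) are also false.}
\begin{itemize}
\item Rerouting via (i) cannot work. The same $F$ does not embed in $K_{t,t}$ plus one edge inside each part. Each triangle must contain one of the two internal edges, which prevents $\{a_1,b_1\}$ and $\{a_2,b_2\}$ from lying in a common part. But the four extra vertices force $a_1,b_1,a_2,b_2$ into a single part.
\item An internal edge need not cost any cross edges. $T_r(n)$ plus one edge lies in $K_1\vee T_r(n-1)$, so it is $F$-free for $k\ge 2$. The same example refutes the claim in your counting paragraph that every internal edge forces missing cross edges.
\end{itemize}

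\textbf{The device that does work is pigeonhole.} The internal graph $I$ on $V(G)\setminus W$ has maximum degree below $\gamma n$. If $e(I)>2rk\gamma n$, then $I$ has a matching with $r(k-|W|-1)+1$ edges, hence $k-|W|$ disjoint internal edges in a single class. By the minimum-degree condition, their endpoints see almost all of the other classes. Together with $|W|$ almost universal vertices, they yield $F$ via (ii).

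Hence $e(I)=O(\gamma n)$, and $e(G)\le e(T_r(n-|W|))+|W|(n-1)+O(\gamma n)$. This falls short of $e(K_{k-1}\vee T_r(n-k+1))$ by about $(k-1-|W|)\frac nr$ unless $|W|=k-1$. In that case $I$ is empty and balancing the parts finishes the proof. Note that this repair again presupposes the almost-universality of $W$ that step (c) fails to provide.
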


Note that a color-$1$-critical graph is precisely a color-critical graph. Let $F$ be a color-critical graph with $\chi(F)=r+1\geq 4$. In~\cite{Zheng  Li Li 2026}, Zheng,  Li and  Li proved that for sufficiently large  $n,$ if  $G$ is an $n$-vertex $F$-free graph, then $T_{r}(n)$ is the unique signless Laplacian spectral extremal graph. In the meantime, for $k\geq 2$ and  a color-$k$-critical graph $F$ with $\chi(F) = r + 1\geq 4,$   they conjectured that $K_{k-1}\vee T_{r}(n-k+1)$ is  the unique graph maximizing $q(G)$ among all $n$-vertex $F$-free graphs if $n$ is sufficiently large. Recently, this conjecture was confirmed  by Chen, et al.~\cite{Chen-Jin-Zhang-Zheng2026}.

By Theorem~\ref{main result}, $K_{k-1}\vee T_{2}(n-k+1)$ is the unique signless Laplacian spectral extremal graph for
an $n$-vertex $kK_{3}$-free graph if $n$ is sufficiently large. Note that, for $k \geq 2$, $kK_{3}$ is also a color-$k$-critical graph with $\chi(kK_{3}) = 3$. Motivated by this observation, we propose the following conjecture for color-$k$-critical graphs $F$ with $\chi(F) = 3.$
\begin{conjecture}\label{conjecture}
Let $k \geq 2$ and $F$ be a color-$k$-critical graph with $\chi(F) = 3$. If $n$ is sufficiently large and $G$ is an $n$-vertex $F$-free graph, then
\[
q(G) \leq q(K_{k-1}\vee T_{2}(n-k+1)).
\]
Moreover,  equality holds if and only if $G = K_{k-1}\vee T_{2}(n-k+1)$.
\end{conjecture}

\medskip
\noindent\textbf{Acknowledgements.}
The main result of this paper was obtained independently of the work of
Jing Zeng \cite{Zeng2026}, who established the same extremal characterization
with the explicit bound $n \ge 28k-17$.
Our proof follows a different approach, combining a partition of the vertex
set according to Perron vector entries with a degree-based upper bound
for the signless Laplacian spectral radius.
Our manuscript was completed in mid-May 2026.

\end{document}